\documentclass{amsart}

\usepackage[pdfborder={0 0 0}, draft=false, pagebackref, unicode=true]{hyperref}

\def\corkscrew{\varkappa}

\usepackage{esint,amssymb,mathrsfs,mathtools,amsthm,tikz}
\usetikzlibrary{shadings}

\def\multichoose#1#2{\ensuremath{\left(\kern-.3em\left(\genfrac{}{}{0pt}{}{#1}{#2}\right)\kern-.3em\right)}}

\newtheorem{thm}[equation]{Theorem} 
\newtheorem{lem}[equation]{Lemma}
\newtheorem{cor}[equation]{Corollary}
\newtheorem{prp}[equation]{Proposition}

\theoremstyle{definition}
\newtheorem{defn}[equation]{Definition}

\newtheorem{rmk}[equation]{Remark}

\numberwithin{equation}{section}

\usepackage{cleveref}
\crefname{equation}{}{}
\crefname{lem}{}{}
\crefrangelabelformat{equation}{(#3#1#4)--(#5#2#6)}

\newcommand\abs[2][empty]{\csname#1\endcsname \lvert{#2}\csname#1\endcsname\rvert}
\newcommand\doublebar[2][empty]{\csname#1\endcsname \lVert{#2}\csname#1\endcsname\rVert}

\newcommand\dist{\mathop{\mathrm{dist}}\nolimits}

\newcommand\Div{\mathop{\mathrm{div}}\nolimits}
\newcommand\Tr{\mathop{\mathrm{Tr}}\nolimits}

\newcommand\supp{\mathop{\mathrm{supp}}\nolimits}
\newcommand\diam{\mathop{\mathrm{diam}}\nolimits}

\newcommand\esssup{\mathop{\mathrm{ess\,sup}}}

\newcommand\R{\mathbb{R}}
\newcommand\Z{\mathbb{Z}}

\newcommand\dmn{{\mathfrak{n}}}  
\newcommand\pdmn{{\mathfrak{n}}} 
\newcommand\bdmn{{\mathfrak{d}}} 
\newcommand\dmnMinusOne{{\mathfrak{n}-1}} 

\newcommand\pbdmn{{\mathfrak{d}}}
\newcommand\bpdmn{{\mathfrak{d}}}

\makeatletter\def\HyPsd@CatcodeWarning#1{}\makeatother

\title[The Besov Dirichlet problem]{The Dirichlet problem with boundary data of fractional smoothness in domains with Ahlfors regular boundary}

\author{Ariel Barton, Svitlana Mayboroda, Alberto Pacati}
\thanks{
Svitlana Mayboroda is partially supported by the NSF RAISE DMS 1839077 and the Simons Initiative of Geometry of Flows. Svitlana Mayboroda and Alberto Pacati are both partially supported by the Simons Collaboration on Localisation of Waves, 563916 SM}
\subjclass[2020]{35J25, 46E35}
\keywords{Dirichlet, Besov spaces, Ahlfors regular, extension} 
\address{
AB: Department of Mathematical Sciences, University of Arkansas, Fayetteville, Arkansas, 72701, USA.}

\address{
SM: Department of Mathematics, R\"amistrasse 101, ETH Z\"urich, 8092 Z\"urich, Switzerland.
}
\address{
AP: Department of Mathematics, R\"amistrasse 101, ETH Z\"urich, 8092 Z\"urich, Switzerland.
}

\email{aeb019@uark.edu}
\email{svitlana.mayboroda@math.ethz.ch} 
\email{alberto.pacati@math.ethz.ch}
\date{\today}

\begin{document}
\begin{abstract} We establish well posedness of the Dirichlet problem in weak local John domains $\Omega\subset\R^\dmn$ with Ahlfors regular boundaries of possibly  non integer dimension $0<\bdmn<\dmn$, for linear  second order elliptic equations with real coefficients, and with data in Besov spaces with a very broad range of acceptable parameters.
\end{abstract}

\maketitle

\setcounter{tocdepth}{2}

\tableofcontents

\setcounter{tocdepth}{3}

\clearpage
\section{Introduction}
We consider the Poisson-Dirichlet problem
\begin{equation}\label{eqn:one}
\left\{\begin{aligned}
-\Div(A\nabla u)&=h&&\text{in }\Omega,\\
u&=f&&\text{on }\partial\Omega,
\end{aligned}\right. \end{equation}
where $A$ is a real uniformly elliptic matrix-valued function, $\Omega\subset\R^\dmn$ is an open set that satisfies suitable openness and connectedness assumptions, and $\partial\Omega$ is $\bdmn$-Ahlfors regular, for some $0<\bdmn<\dmn$. Well posedness of~\eqref{eqn:one} with data $h$ and $f$ in various spaces has received substantial attention in the last several decades. By now, the sharp range of coefficients $A$ and domains $\Omega$ for which the $L^p$-Dirichlet problem (corresponding to $h=0$ and $f\in L^p(\partial\Omega)$) is well posed is rather well understood (see, e.g., \cite{DavJ90, JerK82, HofM14, AzzHMMT20, DavM23, Fen22, KenP01, CavHMT20, FenLM24}). 
Great progress has also been made in understanding the regularity problem, that is, the Dirichlet problem with boundary data $f$ lying in a Sobolev space $\dot W^{1,p}(\partial\Omega)$  (see, e.g., \cite{KenP93, Shen2019, DindosHofmannPipher2023, MouPT22p}). While the $L^p$-Dirichlet problem corresponds to smoothness zero of the boundary data, the regularity problem corresponds to smoothness one.

In between these two endpoints are spaces of fractional smoothness, the so-called Besov spaces $\dot B^{s,p}_q(\partial\Omega)$. For the Laplacian in Lipschitz domains, in \cite{JerK95, MayMit04A}, the authors identified the sharp range of parameters for which~\eqref{eqn:one} is  well posed with respect to the requirements 
\begin{gather*}
f\in \dot B^{s,p}_q(\partial\Omega),\quad h\in \dot B^{s+1/p-2,p}_q(\Omega),\\
u\in \dot B^{s+1/p,p}_q(\Omega).
\end{gather*}
The regularity $u\in \dot B^{s+1/p,p}_q(\Omega)$ is natural because any function $f$ in $\dot B^{s,p}_q(\partial\Omega)$ has an extension $Ef$ in $\dot B^{s+1/p,p}_q(\Omega)$ (see \cite[Chapter 5]{JonW84}). The regularity $h\in \dot B^{s+1/p-2,p}_q(\Omega)$ is natural because the Laplacian $\Delta u$ of a function $u$ in $\dot B^{s+1/p,p}_q(\Omega)$ must satisfy $\Delta u\in \dot B^{s+1/p-2,p}_q(\Omega)$ (see \cite[Chapter 2]{Tri83}).

When considering more general coefficients $A$, we cannot expect such a high regularity of the solution $u$.
Specifically, if $s+1/p>1$ then $u\in \dot B^{s+1/p,p}_q(\Omega)$ implies that $\nabla u\in \dot B^{s+1/p-1,p}_q(\Omega)$, a space of functions with smoothness properties considerably beyond local integrability. But in general, solutions to uniformly elliptic equations do not have a gradient with a positive degree of smoothness (see the introduction of \cite{BarM16A}). We instead expect solutions $u$ with one degree of smoothness.
A suitable environment for the solution to live in is an appropriate {\it weighted} Sobolev space $\dot {W} ^ {1,p}_{w_{p,s}}$, as seen in the works \cite{MitT06, MazMS10} (see also \cite{DiF96, KinZ01, AusQ02}). They establish  well posedness of the Poisson-Dirichlet problem~\eqref{eqn:one} in Lipschitz and $C^1$ domains for matrices $A$ satisfying VMO or Dini-type conditions with respect to the requirements
\begin{equation}\label{eqn:unaveraged}
\begin{gathered}
f\in \dot B^{s,p}_p(\partial\Omega), \quad h=\Div\vec H, \quad \vec H\in L^p_{w_{p,s}}(\Omega),\\
u\in\dot W^{1,p}_{w_{p,s}}(\Omega),
\end{gathered}\end{equation}
for a certain range of parameters $1<p<\infty$ and $0<s<1$. The weight $w_{p,s}$ is defined by
\begin{equation}\label{eqn:weight:one}
dw_{p,s}=\dist (\cdot,\partial\Omega)^{-1+p-ps}\,dx.
\end{equation}
These spaces are sharp in Lipschitz domains, in the sense that boundary values (that we refer to as traces) of functions in $\dot W^{1,p}_{w_{p,s}}(\Omega)$ lie in $\dot B^{s,p}_p(\partial\Omega)$ (see \cite{MitT06, Kim07}) and functions in $\dot B^{s,p}_p(\partial\Omega)$ admit an extension to $\dot W^{1,p}_{w_{p,s}}(\Omega)$ (see  \cite{Usp61, Nik77B, Sha85, NikLM88, MitT06, Kim07, MazMS10}).

In \cite{BarM16A}, the authors (two of the authors of the current manuscript) studied well posedness of the problem~\eqref{eqn:one} in Lipschitz domains for matrices $A$ with coefficients independent of a direction transverse to the boundary (see also \cite{AmeA18}). In such generality of coefficients, the gradient of solutions does not have high local integrability; it is only in $L^\beta_{loc}(\Omega)$ for some $\beta$ possibly close to $2$. Therefore \cite{BarM16A} introduced local averaging so as not to affect the range of parameters $s, p$ for which there is  well posedness of the problem~\eqref{eqn:one} with boundary data $f\in \dot B^{s,p}_p(\partial\Omega)$. Namely, they generalized the weighted Sobolev spaces $\dot W^{1,p}_{w_{p,s}}$, considered in~\eqref{eqn:unaveraged}, to $\dot W^{1,p}_{av,\beta,s}$, where the norm is defined by
\begin{equation}\label{eqn:averaged:lip}
\|u\|_{\dot W^{1,p}_{av,\beta,s}(\Omega)}=\biggl(
\int_\Omega
\biggl(\fint_{B(x,\dist(x,\partial\Omega)/2)} |\nabla u|^\beta \biggr)^{p/\beta}
\dist(x,\partial\Omega)^ {-1+p-ps}
\, dx\biggr)^{1/p}.
\end{equation}
In such spaces, the authors obtained well posedness for a sharp range of parameters and proved trace bounds that yield sharpness of the norms. The inspiration for taking local averages in~\eqref{eqn:averaged:lip} comes from the definition of the modified nontangential maximal function $\tilde N_+(\nabla u)$ given in \cite{KenP93}. 

Beyond Lipschitz domains, it was recently shown in \cite{CaoHMPZ25} that the Dirichlet problem with Hölder continuous ($\dot C^s(\partial\Omega)$) boundary data $f$ and solution $u\in\dot C^s(\Omega)$ is well posed (for $s$ sufficiently small) in domains with boundaries of codimension $1$ satisfying a Wiener-type criterion, and for general operators with elliptic and uniformly bounded coefficients. In our terminology, $\dot C^s(\partial\Omega)=\dot B^{s,\infty}_\infty(\partial\Omega)$, and it is therefore a space of fractional smoothness. For boundaries of higher or mixed codimensions, the existence and uniqueness of energy solutions in $\dot W^{1,2}(\Omega)$, corresponding to boundary data in the Besov space $\dot B^{1/2,2}_2(\partial\Omega)$, were proven in \cite{DavFM21} and \cite{DavFM23B}.
We refer the reader to \cite{BarMP25pD} for a more detailed survey of results about the Poisson and Dirichlet problem.   

With the present work and its companion \cite{BarMP26pA}, we prove  well posedness of the Dirichlet-Poisson problem for a wide range of parameters in very general domains with Ahlfors regular boundaries of any (possibly non-integer) dimension $0<\bdmn<\dmn$ and in the full generality of real elliptic uniformly bounded coefficients. 
Moreover, we provide an extrapolation criterion, improving the range of parameters for domains and coefficients for which the $L^q$-Dirichlet problem is solvable for some $1<q<\infty$.

Our first main theorem is as follows.
\begin{thm}\label{thm:Dirichlet}
Let $\Omega\subseteq\R^\dmn$, $\dmn\geq 2$, be a connected open set. Suppose that $\partial\Omega$ is $\bdmn$-Ahlfors regular for some real number $\bdmn$ with $0<\bdmn<\dmn$, and that $\R^\dmn\setminus\Omega$ is unbounded.

If $\bdmn=\dmnMinusOne$, we additionally require that $\Omega$ satisfies the weak local John condition. If $\dmnMinusOne<\bdmn<\dmn$, we require $\Omega$ to satisfy the interior corkscrew and Harnack chain conditions.

Let $L=-\Div A\nabla$ be a linear differential operator associated to real, not necessarily symmetric coefficients $A:\Omega\mapsto\R^{\dmn\times\dmn}$ that satisfy the positive definiteness and boundedness conditions (\ref{eqn:elliptic:introduction}--\ref{eqn:elliptic:introduction:1}).

There are then numbers $\mathfrak{a}$, $\mathfrak{a}^*\in (0,1]$ and $p^-$, $p^+$ with $p^-<2<p^+$ such that, if $(s,1/p)$ lies in the open pentagon indicated in Figure~\ref{fig:arbitrary}, and if $p^-<\beta<p^+$, then
for all $f\in \dot B^{s,p}_p(\partial\Omega)$ (see Definition~\ref{dfn:Besov}) and all
$\vec H:\Omega\mapsto\R^\dmn$ such that the right-hand side of the bound~\eqref{eqn:Besov:estimate} is finite, there is a unique solution $u$ to the Poisson-Dirichlet problem
\begin{equation}\label{eqn:Dirichlet}
\left\{\begin{aligned}
Lu&=-\Div(\delta^{\bdmn+1-\pdmn}\vec H)&&\text{in }\Omega,\\
u&=f&&\text{on }\partial\Omega\text{, in the sense of Definition~\ref{dfn:trace} below,}
\end{aligned}\right.\end{equation}
that satisfies
\begin{multline}\label{eqn:Besov:estimate}
\int_\Omega
\biggl(\fint_{B(x,\delta(x)/2)} |\nabla u|^\beta \biggr)^{p/\beta}
\delta(x)^ {\bdmn-\pdmn+p-ps}
\,dx
\\\leq
C\|f\|_{\dot B^{s,p}_p(\partial\Omega)}^p
+C\int_\Omega
\biggl(\fint_{B(x,\delta(x)/2)}|\vec H|^\beta \biggr)^{p/\beta}
\delta(x)^ {\bdmn-\pdmn+p-ps}
\,dx
\end{multline}
for some constant $C$ depending on $p$, $s$, $\beta$, $A$, and~$\Omega$, but not on~$\vec H$ or~$f$.

If $0<s<\mathfrak{a}$, $p^-<\beta<p^+$, and the right-hand side of the bound~\eqref{eqn:Besov:estimate:infinity} is finite, then there is a unique solution to the problem~\eqref{eqn:Dirichlet} that satisfies the estimate
\begin{equation}\label{eqn:Besov:estimate:infinity}
\sup_{x\in\Omega}
\biggl(\fint_{B(x,\delta(x)/2)}|\nabla u|^\beta\biggr)^{1/\beta}
\delta(x)^ {1-s}
\leq C\|f\|_{\dot C^s(\partial\Omega)}+
C\sup_{x\in\Omega}
\biggl(\fint_{B(x,\delta(x)/2)}|\vec H|^\beta\biggr)^{1/\beta}
\delta(x)^ {1-s}
.\end{equation}

Furthermore, we have that $\mathfrak{a}\geq \max(\alpha,1-\bdmn)$ and $\mathfrak{a}^*\geq \max(\alpha^*,1-\bdmn)$, where $\alpha$ is as in the boundary De Giorgi-Nash estimate (Lemma~\ref{lem:boundary:DGN}), and $\alpha^*$ is as in Lemma~\ref{lem:boundary:DGN} with $L$ replaced by~$L^*$.

Finally, $p^+$ may be taken to be the number $p^+_L$ in Meyers's interior reverse Hölder estimate \eqref{eqn:interior:Meyers} for~$L$, and $p^-$ may be taken to be the Hölder conjugate of the number $p^+_{L^*}$ in Meyers's reverse Hölder estimate for~$L^*$.
\end{thm}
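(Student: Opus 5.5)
We reduce the theorem to the homogeneous Poisson problem with zero boundary data, using an extension operator and trace theorem. The companion paper \cite{BarMP26pA} supplies the trace and extension results in the averaged weighted spaces. The trace theorem says that $\nabla u$ finite in the left-hand norm of \eqref{eqn:Besov:estimate} forces $\Tr u\in\dot B^{s,p}_p(\partial\Omega)$, for $0<s<1$ and the stated $p$. The extension theorem gives, for $f\in\dot B^{s,p}_p(\partial\Omega)$, a function $Ef$ with $\Tr Ef=f$ whose gradient has norm at most $C\|f\|_{\dot B^{s,p}_p(\partial\Omega)}$ in that space. Its analogue for $\dot C^s$ handles \eqref{eqn:Besov:estimate:infinity}. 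Writing $u=Ef+v$, the function $v$ must solve $Lv=-\Div(\delta^{\bdmn+1-\pdmn}\vec H)-L(Ef)=-\Div(\delta^{\bdmn+1-\pdmn}\vec H')$, where
\[
\vec H'=\vec H-\delta^{\pdmn-\bdmn-1}A\nabla Ef ,
\]
with $\Tr v=0$. The averaged weighted norm of $\vec H'$ is controlled by that of $\vec H$ plus $\|f\|$, since $A$ is bounded and $\delta$ is comparable to a constant on each ball $B(x,\delta(x)/2)$. So it suffices to solve the problem with $f=0$.

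For $f=0$, we first work in the energy space $\dot W^{1,2}_0$ adapted to the measure $\delta^{\bdmn+1-\pdmn}dx$. The mixed-dimensional framework of David--Feneuil--Mayboroda gives existence and uniqueness by Lax--Milgram, and a Green's function $G$ for $L$ and $L^*$. We then represent
\[
v(x)=\int_\Omega \delta^{\bdmn+1-\pdmn}\nabla_yG^*(y,x)\cdot\vec H(y)\,dy .
\]
The weighted estimate comes from decomposing $\Omega$ into Whitney cubes $Q$ and bounding the averaged $L^\beta$ norm of $\nabla v$ on each cube. When the source lies in a nearby Whitney cube, we use Meyers's reverse Hölder estimate \eqref{eqn:interior:Meyers}. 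This is where $p^-<\beta<p^+$ enters, by duality with $L^*$. When the source cube is far, we use Caccioppoli together with pointwise Green's function bounds. These bounds combine the interior Hölder estimate with the boundary De Giorgi--Nash decay of Lemma~\ref{lem:boundary:DGN}, which gives an extra factor $(\delta/\text{dist})^{\alpha}$ at each endpoint. With $\bdmn<\pdmn-1$, one can instead gain $1-\bdmn$ from the capacity-type estimate, which explains $\max(\alpha,1-\bdmn)$.

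This produces a kernel on Whitney cubes, $K(Q,R)\lesssim(\ell(Q)/D)^{\mathfrak a}(\ell(R)/D)^{\mathfrak a^*}$ times the natural scaling, where $D=\ell(Q)+\ell(R)+\dist(Q,R)$. The $\mathfrak a$ exponent applies at the target cube and the $\mathfrak a^*$ exponent at the source cube. The corkscrew, Harnack chain, or weak local John hypotheses are used to chain Harnack estimates and to compare $G$ with its values at corkscrew points. We then apply Schur's test, with power weights, to this kernel on $\ell^p$ of the weighted Whitney sums. Schur's test converges exactly when the exponents satisfy inequalities involving $s$, $1/p$, $\mathfrak a$, $\mathfrak a^*$ and $\bdmn$. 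Together with the trace/extension range $0<s<1$, these inequalities cut out the pentagon of Figure~\ref{fig:arbitrary}. For $p=\infty$ the same kernel bound with a supremum gives \eqref{eqn:Besov:estimate:infinity} for $0<s<\mathfrak a$.

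Uniqueness follows from the $L^2$ argument by duality. Suppose $u$ solves the problem with zero data and satisfies the bound. Test against the solution of the adjoint problem with compactly supported data in the dual range $(1-s,1/p')$, which lies in the admissible range for $L^*$. Approximate by truncations using the trace theorem to justify integration by parts, and conclude $u=0$.

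The main obstacle is the far-range Green's function estimate near a $\bdmn$-dimensional boundary in the weak local John case. There the boundary Hölder decay must be transferred to gradient averages, uniformly over non-integer dimensions. Getting the sharp exponents there is what determines the pentagon.
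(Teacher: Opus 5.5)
Your reduction is the same as the paper's. You write $u=Ef+w$, where $w$ has zero trace and solves the Poisson problem with $\vec H'=\vec H-\delta^{\pdmn-\bdmn-1}A\nabla Ef$, so that $|\vec H'|\le|\vec H|+\Lambda|\nabla Ef|$ by \eqref{eqn:elliptic:introduction:1}. The gap is that the load-bearing step of this reduction is assumed rather than proved. You take the bound $\|Ef\|_{\dot W^{1,p}_{av,c,\beta,s}(\Omega)}\le C\|f\|_{\dot B^{s,p}_p(\partial\Omega)}$, together with $\Tr Ef=f$, as a black box from \cite{BarMP26pA}. That paper proves only the $f=0$ (Poisson) case. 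The existing extension theory on Ahlfors regular sets of arbitrary dimension (\cite{DavFM21, DavFM23B}) gives boundedness only on $\dot B^{1/2,2}_2$. This theorem needs, beyond the companion paper, an extension theorem for all $0<s<1$ and $\bdmn/(\bdmn+s)<p\le\infty$. That includes $p=\infty$ for the $\dot C^s$ estimate, and $p<1$, where $\dot B^{s,p}_p$ is the atomic space of Definition~\ref{dfn:Besov}. Your proposal contains no argument for it, and it is the technical core of the paper. (The trace theorem in the other direction is not needed at all.)

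Here is what is needed.
\begin{itemize}
\item \emph{Construction and gradient bound.} Take $Eg(y)=\sum_{Q\in\mathcal{G}}\varphi_Q(y)\fint_{\Delta_Q}g\,d\sigma$ with a Whitney partition of unity. Because $\sum_Q\nabla\varphi_Q=0$, you get $\sup_R|\nabla Eg|\le C\ell(R)^{-1}\fint_{\Delta_R}\fint_{\Delta(\xi_R,K\ell(R))}|g(x)-g(z)|\,d\sigma(z)\,d\sigma(x)$.
\item \emph{The case $1\le p\le\infty$.} H\"older's inequality, the Ahlfors condition, and the bounded overlap of the $\Delta_R$ at each scale (Lemma~\ref{lem:Whitney:boundary}) turn the Whitney sum of Lemma~\ref{lem:averaged:Whitney} into the Slobodeckij seminorm. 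This uses a geometric series in the scale that converges since $\bdmn+ps>0$.
\item \emph{The case $p<1$.} You need the block bound $\|Ea\|\le C\varrho^{\bdmn/p-\bdmn/2-s}$ for a block $a$ in $\Delta(z,\varrho)$. Cubes with $\ell(R)\le\varrho$ are controlled by the Lipschitz bound on $a$ and the count \eqref{eqn:close:to:Ahlfors}. Cubes with $\ell(R)>\varrho$ are controlled by the size and support of $a$, which gives the series $\sum_{2^k>\varrho}2^{k(\bdmn-p\bdmn-ps)}$; it converges precisely because $p>\bdmn/(\bdmn+s)$. The $p$-triangle inequality then gives the atomic bound.
\item \emph{Trace.} The identity $\Tr Ef=f$ follows from $\fint_{B(x,r)}|Ef-f(x)|\le C\fint_{\Delta(x,Kr)}|f-f(x)|\,d\sigma$ and Lebesgue differentiation on $(\partial\Omega,\sigma)$.
\end{itemize}

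Conversely, your Green's function, Schur test and duality sketch is the content of \cite{BarMP26pA}. The paper simply cites it, and you may too; as a standalone argument it is too heuristic to check. Uniqueness needs no duality or truncation. The difference of two solutions lies in $\dot W^{1,p}_{av,\beta,s}(\Omega)$, has zero trace and solves $Lv=0$, so it vanishes by the uniqueness part of the Poisson problem.
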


\def\figurealpha{0.3}
\def\figuredimen{2}
\begin{figure}
\begin{tikzpicture}[scale=3]
\begin{scope}[shift={(0,0)}]
\draw [->] (-0.3,0)--(1.3,0) node [below ] {$\vphantom{1}s$};
\draw [->] (0,-0.3)--(0,1.3) node [ left] {$1/p$};
\fill  (1,1) -- (\figurealpha,0) -- (0,0) -- (1-\figurealpha,1) --(1,1+\figurealpha/\figuredimen) -- cycle;
\node at (1,1) {$\circ$};
\node [below right] at (1,1) {$(1,1)$};
\node at (1,1+\figurealpha/\figuredimen) {$\circ$};
\node [above right] at (1,1+\figurealpha/\figuredimen) {$(1,1+\mathfrak{a}^*/\bdmn)$};
\node at (1-\figurealpha,1) {$\circ$};
\node [above left] at (1-\figurealpha,1) {$(1-\mathfrak{a}^*,1)$};
\node [below] at (\figurealpha,0) {$\phantom{1}\mathfrak{a}$}; \node at (\figurealpha,0) {$\circ$};
\end{scope}
\end{tikzpicture}
\caption{Values of the parameters $(s,1/p)$ such that the estimate~\eqref{eqn:Besov:estimate} on solutions to the Poisson problem is valid. Because $\mathfrak{a}^*\geq 1-\bdmn$, the illustrated region is convex.}
\label{fig:arbitrary}
\end{figure}
To state our next theorem, we need to recall the definition of  well posedness of the $L^q$-Dirichlet problem. Let $1<q<\infty$; we say that the $L^q$-Dirichlet problem for $L$ is  well posed in $\Omega$ if, for every function $f$ that is Lipschitz continuous and compactly supported, the unique solution $v$ to the Dirichlet problem
\begin{equation}\label{eqn:Dirichlet:cts}
\left\{\begin{gathered}\begin{aligned}
Lv&=0\quad\text{in }\Omega,\\
v&=f\quad\text{on }\partial\Omega,\\
\end{aligned}\\
\int_\Omega |\nabla v|^2\,\delta^{\bdmn+1-\pdmn}<\infty\\
\end{gathered}
\right.\end{equation}
satisfies the nontangential estimate
\begin{equation}
\label{eqn:N:bound}
\|Nv\|_{L^q(\partial\Omega)}\leq C_q\|f\|_{L^q(\partial\Omega)}\end{equation}
where $N$ is the nontangential maximal function given by \begin{equation}\label{eqn:N:intro}
Nv(\xi)=N_{a,\Omega} v(\xi)
=\esssup\{|v(x)|:x\in\Omega,\> |x-\xi|<(1+a)\dist(x,\partial\Omega)\}
\end{equation}
for some positive number~$a$. 
Assuming that our operator and domain are such that the $L^q$-Dirichlet problem is well posed for some $1<q<\infty$ corresponds to adding a point on the $\{s=0\}$-axis in Figure~\ref{fig:arbitrary}. In the case that such a $q$ exists, we provide an extrapolation criterion that extends the range of parameters given in Theorem~\ref{thm:Dirichlet}.
\begin{thm}\label{thm:Dirichlet:Lq}
Suppose that the conditions of Theorem~\ref{thm:Dirichlet} are satisfied.

Suppose furthermore that there is a $q\in (1,\infty)$ such that the $L^q$-Dirichlet problem is well posed. That is, suppose that there exist $q\in (1,\infty)$, $a>0$, and $C_q<\infty$ such that if $f$ is Lipschitz continuous and compactly supported, then the solution $v$ to the problem~\eqref{eqn:Dirichlet:cts} also satisfies the estimate~\eqref{eqn:N:bound}.

Then there is a number $\mathfrak{b}^*\in (0,1]$ such that the conclusions of Theorem~\ref{thm:Dirichlet} are valid for $(s,1/p)$ in the region indicated on the left in Figure~\ref{fig:extrapolation}.

If instead there is a $q^*\in (1,\infty)$ such that the solution $v$ to the problem~\eqref{eqn:Dirichlet:cts}, with $L$ replaced by~$L^*$, satisfies the estimate~\eqref{eqn:N:bound} (with $q$ replaced by~$q^*$), then there is a $\mathfrak{b}\in (0,1]$ such that the conclusions of Theorem~\ref{thm:Dirichlet} are valid for $(s,1/p)$ in the region indicated on the right in Figure~\ref{fig:extrapolation}.

Furthermore, the numbers $\mathfrak{b}^*$ or $\mathfrak{b}$ satisfy the estimates
\begin{equation}\label{eqn:b:lower}
\mathfrak{b}\geq \alpha,
\qquad
\mathfrak{b}^*\geq \alpha^*,
\qquad
\mathfrak{b}\geq 1+\frac{\bdmn}{q^*}-\bdmn,
\qquad
\mathfrak{b}^*\geq 1+\frac{\bdmn}{q}-\bdmn
\end{equation}
where $\alpha$ and $\alpha^*$ are the boundary De Giorgi-Nash exponents for $L$ and~$L^*$.

(If $q$ and $q^*$ both exist, the conclusions of Theorem~\ref{thm:Dirichlet} are valid for $(s,1/p)$ in the region indicated on the bottom in Figure~\ref{fig:extrapolation}.)
\end{thm}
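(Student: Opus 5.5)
The proof of Theorem~\ref{thm:Dirichlet:Lq} reruns the proof of Theorem~\ref{thm:Dirichlet}. The only change is that the boundary H\"older decay of solutions, which gives the exponent $\mathfrak{a}^*$ (respectively $\mathfrak{a}$), is replaced by a stronger decay estimate that follows from solvability of the $L^q$-Dirichlet problem. Recall the structure behind Theorem~\ref{thm:Dirichlet}. Well posedness with the estimate~\eqref{eqn:Besov:estimate} is reduced, by the extension and trace theorems for the averaged weighted spaces, to the homogeneous-data Poisson problem $Lu=-\Div(\delta^{\bdmn+1-\pdmn}\vec H)$, $u=0$ on $\partial\Omega$. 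That problem is handled by the Green's function representation
\[
u(x)=\int_\Omega \nabla_y G(x,y)\cdot \delta(y)^{\bdmn+1-\pdmn}\vec H(y)\,dy .
\]
Its boundedness in the weighted averaged $L^p$ norms follows from Schur-type estimates. These in turn rest on decay of $G$, and of its averaged gradient, near the boundary in each variable: decay of order $\alpha$ in $x$ (solutions of $L$) and of order $\alpha^*$ in $y$ (solutions of $L^*$). Uniqueness and the corner $(1,1+\mathfrak{a}^*/\bdmn)$ come from duality with the problem for $L^*$. So the plan is to show that $L^q$ solvability for $L$ upgrades the boundary decay exponent relevant to $L^*$ from $\mathfrak{a}^*$ to $\mathfrak{b}^*$, and symmetrically for $q^*$. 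Once that is done, the pentagon argument goes through verbatim with the new exponent.

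\textbf{Key step.} We must derive a quantitative decay estimate from~\eqref{eqn:N:bound}. Solvability of $L^q$ for $L$ is equivalent to the elliptic measure $\omega_L$ lying in a reverse H\"older class $RH_{q'}$ with respect to $\sigma=\mathcal{H}^\bdmn|_{\partial\Omega}$, locally and uniformly at all scales. We combine this with the comparison between Green's function and elliptic measure, $G(X_\Delta,y)\approx \omega_L^{X_\Delta}(\Delta(y))\,\delta(y)^{\pdmn-2}/\ldots$ (the Ahlfors-regular, mixed-dimension version valid under our corkscrew/Harnack chain hypotheses). H\"older's inequality with the $RH_{q'}$ bound then gives
\[
\omega_L(\Delta(\xi,r))\lesssim (r/R)^{\bdmn/q}\,\omega_L(\Delta(\xi,R)) .
\]
The surplus exponent $\bdmn/q$, measured against the dimensional normalization $\bdmn-1$, yields a decay rate $r^{1+\bdmn/q-\bdmn}$ for the Green's function with pole fixed and second variable $y$ near the boundary. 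This Green's function is an $L^*$-solution in $y$, so it is precisely the decay in the $L^*$ variable, which explains why $q$ improves $\mathfrak{b}^*$. Taking the better of this and the De Giorgi--Nash rate gives $\mathfrak{b}^*\ge\max(\alpha^*,1+\bdmn/q-\bdmn)$, as in~\eqref{eqn:b:lower}. By duality, $q^*$ gives $\mathfrak{b}\ge\max(\alpha,1+\bdmn/q^*-\bdmn)$.

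\textbf{Remaining steps.} Next, we feed the improved decay into the averaged-gradient Green's function estimates by Caccioppoli and Meyers's estimate~\eqref{eqn:interior:Meyers}. This keeps $p^-<\beta<p^+$ unchanged. We then rerun the Schur and duality argument to obtain the enlarged region of Figure~\ref{fig:extrapolation}. The relevant vertices move from $\mathfrak{a}^*$ to $\mathfrak{b}^*$ (respectively from $\mathfrak{a}$ to $\mathfrak{b}$); when both $q$ and $q^*$ exist, both moves are made simultaneously. For pairs $(s,1/p)$ in the new region we also check that the extension and trace results cover those parameters, which they do since they are independent of $L$.

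\textbf{Main obstacle.} I expect the main obstacle to be the key step: turning the nontangential $L^q$ bound into a pointwise, scale-invariant decay estimate of the Green's function in the adjoint variable in this generality. The difficulty is that the boundary dimension $\bdmn$ may be non-integer and, when $\bdmn<\dmnMinusOne$, no classical harmonic-measure theory is available. My first attempt would avoid the elliptic measure altogether and work with the solution operator directly. Test~\eqref{eqn:N:bound} against data $f$ that are Lipschitz cutoffs of surface balls, and use the Green's-function representation of $v$ (Riesz-type formula) together with a duality pairing. This should give an $L^{q'}$ bound on the averaged conormal derivative of $G(X,\cdot)$ over boundary annuli. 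Hölder's inequality and Ahlfors regularity then convert it into the decay rate $1+\bdmn/q-\bdmn$.
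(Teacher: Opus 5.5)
Your first move, reducing to the homogeneous Poisson problem via the extension operator, is exactly the paper's proof. The paper cites \cite[Theorems 1.5 and 1.7]{BarMP26pA} for the Poisson problem with $f=0$ in the regions of Figure~\ref{fig:extrapolation}. It then applies Proposition~\ref{prp:extension:sol}: set $u=w+Ef$, where $w$ is the Poisson solution with $\vec H=-\delta^{\pdmn-\bdmn-1}A\nabla Ef$. This uses Corollaries~\ref{cor:E:Sobolev:bound} and~\ref{cor:atom:ext:bound} for the bound on $Ef$, Corollary~\ref{cor:trace:of:extension} for its trace, and Remark~\ref{final:remark}. In this paper those extension and trace bounds are the actual content, not something to recall. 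Everything else in your proposal tries to reprove the companion paper's extrapolation theorem, and there the plan falls short.

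\textbf{The gap.} The region on the left of Figure~\ref{fig:extrapolation} is not the pentagon of Figure~\ref{fig:arbitrary} with $\mathfrak{a}^*$ replaced by $\mathfrak{b}^*$. It has an additional vertex $(0,1/q)$.
\begin{itemize}
\item If you rerun the pentagon argument ``verbatim'' with only the exponent changed, the left edge still runs from $(0,0)$ to $(1-\mathfrak{b}^*,1)$. So (unless $\mathfrak{b}^*=1$) for small $s$ you only get $1/p<s/(1-\mathfrak{b}^*)$.
\item The triangle with vertices $(0,0)$, $(0,1/q)$, $(1-\mathfrak{b}^*,1)$ is therefore not covered. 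In particular this includes all $(s,1/p)$ with $s$ small and $p$ near $q$.
\item Symmetrically, in the right picture you miss the vertex $(1,1-1/q^*)$ and the triangle $(1,1)$, $(1,1-1/q^*)$, $(\mathfrak{b},0)$.
\end{itemize}

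That vertex is the $L^q$ estimate \eqref{eqn:N:bound} itself, used as an endpoint at $s=0$. The introduction says $L^q$ solvability ``corresponds to adding a point on the $\{s=0\}$-axis'', and the companion paper reaches it by extrapolating between the basic Poisson range and this endpoint.

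Your key step discards exactly the information needed. You pass from the reverse H\"older ($L^{q'}$) bound to a pointwise decay rate via H\"older's inequality. That suffices to get $\mathfrak{b}^*\ge 1+\bdmn/q-\bdmn$, but a decay exponent alone cannot produce estimates near $(0,1/q)$.

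To close the gap you need an additional argument that uses \eqref{eqn:N:bound} directly to obtain the Poisson estimate near $s=0$, $p=q$. You could use, for example, the $L^{q'}$-integrability of the averaged conormal derivative of the Green function on boundary annuli, not merely its consequence for decay. You would then combine that endpoint with the improved pentagon. Alternatively, cite \cite{BarMP26pA} as the paper does and let the extension operator carry the rest.
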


\def\figureq{0.2}
\begin{figure}
\begin{tikzpicture}[scale=3]
\begin{scope}[shift={(0,0)}]
\draw [->] (-0.3,0)--(1.3,0);
\draw [->] (0,-0.3)--(0,1.3);
\node at (1,1) {$\circ$};
\node [below right] at (1,1) {$(1,1)$};
\node at (1,1+\figurealpha/\figuredimen) {$\circ$};
\node [above right] at (1,1+\figurealpha/\figuredimen) {$(1,1+\mathfrak{b}^*/\bdmn)$};
\node at (1-\figurealpha,1) {$\circ$};
\node [above left] at (1-\figurealpha,1) {$(1-\mathfrak{b}^*,1)$};
\node [left] at (0,\figureq) {$1/q$}; \node at (0,\figureq) {$\circ$};
\node [below] at (\figurealpha,0) {$\phantom{1}\mathfrak{a}$}; \node at (\figurealpha,0) {$\circ$};
\fill
  (0,0) 
--(0,\figureq) 
--(1-\figurealpha,1) 
--(1,1+\figurealpha/\figuredimen) 
-- (1,1)
--(\figurealpha,0) 
--cycle;
\end{scope}
\begin{scope}[shift={(2,0)}]
\draw [->] (-0.3,0)--(1.3,0);
\draw [->] (0,-0.3)--(0,1.3);
\node at (1,1-\figureq) {$\circ$};
\node [right] at (1,1-\figureq) {$(1,1-1/q^*)$};
\node at (1,1+\figurealpha/\figuredimen) {$\circ$};
\node [above right] at (1,1+\figurealpha/\figuredimen) {$(1,1+\mathfrak{a}^*/\bdmn)$};
\node at (1-\figurealpha,1) {$\circ$};
\node [above left] at (1-\figurealpha,1) {$(1-\mathfrak{a}^*,1)$};
\node [below] at (\figurealpha,0) {$\phantom{1}\mathfrak{b}$}; \node at (\figurealpha,0) {$\circ$};
\fill
  (1,1) 
--(1,1-\figureq) 
--(\figurealpha,0)
--(0,0)
--(1-\figurealpha,1) 
--(1,1+\figurealpha/\figuredimen) 
--cycle;
\end{scope}
\end{tikzpicture}\\
\begin{tikzpicture}[scale=3]
\draw [->] (-0.3,0)--(1.3,0);
\draw [->] (0,-0.3)--(0,1.3);
\node at (1,1-\figureq) {$\circ$};
\node [right] at (1,1-\figureq) {$(1,1-1/q^*)$};
\node at (1,1+\figurealpha/\figuredimen) {$\circ$};
\node [above right] at (1,1+\figurealpha/\figuredimen) {$(1,1+\mathfrak{b}^*/\bdmn)$};
\node at (1-\figurealpha,1) {$\circ$};
\node [above left] at (1-\figurealpha,1) {$(1-\mathfrak{b}^*,1)$};
\node [left] at (0,\figureq) {$1/q$}; \node at (0,\figureq) {$\circ$};
\node [below] at (\figurealpha,0) {$\phantom{1}\mathfrak{b}$}; \node at (\figurealpha,0) {$\circ$};
\fill
  (1,1)
--(1,1-\figureq) 
--(\figurealpha,0) 
--(0,0)
--(0,\figureq) 
--(1-\figurealpha,1) 
--(1,1+\figurealpha/\figuredimen) 
--cycle;
\end{tikzpicture}
\caption{Values of the parameters $(s,1/p)$ such that the estimate~\eqref{eqn:Besov:estimate} is valid, given solvability of the $L^q$-Dirichlet problem for $L$ (on the left) or the $L^{q^*}$-Dirichlet problem for $L^*$ (on the right) with nontangential estimates.  Because $\mathfrak{b}^*\geq 1-\bdmn+\bdmn/q$, the illustrated region is convex.
In the common case where $q$ and $q^*$ both exist, the estimate~\eqref{eqn:Besov:estimate} is valid for values of $(s,1/p)$ as in the bottom picture.
}

\label{fig:extrapolation}
\end{figure}
In \cite{BarMP26pA}, we proved Theorems \ref{thm:Dirichlet} and \ref{thm:Dirichlet:Lq} in the case $f=0$, that is, for the Poisson problem with homogeneous boundary values. 

Before our results, a wide range of solvability parameters for the Poisson-Dirichlet problem had been obtained only in domains not far from Lipschitz, and under significant restrictions on the coefficients. This is primarily because methods in the existing literature often relied on layer potentials and interpolation between the $L^q$-Dirichlet problem and the regularity problem. In \cite{BarMP26pA}, we used a different approach to overcome this lack of regularity. We did this by first finding a parameter range for which the Poisson problem is well posed under the broad assumptions of Theorem~\ref{thm:Dirichlet}, and then extrapolating (rather than interpolating) between this range and the well posed $L^q$-Dirichlet problem in cases where such well posedness holds. 

In this manuscript, we prove that the general case is obtained as a consequence of the Poisson one. More precisely, we show that we can transform the Dirichlet problem ($f\ne 0$) into a Poisson problem ($f=0$) by using an extension operator $E:\dot B^{s,p}_p(\partial\Omega)\to \dot W^{1,p}_{av,\beta,s}(\Omega)$. Such a use of a bounded extension operator in the study of the Dirichlet problem with boundary data in Besov spaces is not new. The result is known in the half-space since the work of Uspenski\u{\i} (\cite{Usp61}). More recently, in \cite{MitT06}, the authors used bounds for the extension operator on Besov spaces to prove well posedness of the Dirichlet problem for the Hodge Laplacian in Lipschitz domains. See also \cite{Kim07} for the boundedness of extension and trace operators in Besov spaces on Lipschitz domains.
In \cite{MazMS10}, the authors proved extension theorems for Besov spaces with higher smoothness in Lipschitz domains, with applications to higher-order elliptic systems. In \cite{DavFM23B} and \cite{DavFM21}, the authors built an extension operator for Ahlfors regular sets of any dimension (and even for more general sets, with possibly mixed dimensions), but they proved the boundedness of this operator only in $\dot B^{1/2,2}_2$.

The construction of the extension operator is the technical core of this paper and uses many of the techniques of \cite{DavFM23B}.

\subsection{Outline of the paper}
The structure of our paper is as follows. In Section~3, we will define the function spaces we work in, set the assumptions on our domains, and recall some useful preliminary results that we will either use in our proofs or need for the statements of Theorems~\ref{thm:Dirichlet} and~\ref{thm:Dirichlet:Lq} to make sense. In Section~4, we define the extension operator,  and we prove its boundedness in Besov spaces on Ahlfors regular sets. In the proof, we need to distinguish between the cases $p\ge 1$ and $p<1$, in agreement with the different definitions of the Besov spaces in these two cases (see Definition~\ref{dfn:Besov}). In Section~5, we use our results from Section~4, combined with the Poisson version of our main theorems (see \cite{BarMP26pA}), to prove Theorems~\ref{thm:Dirichlet} and~\ref{thm:Dirichlet:Lq}.
\section{Definitions and Preliminaries}
In this section, we define all the domains, function spaces, and quantities appearing in Theorems~\ref{thm:Dirichlet} and~\ref{thm:Dirichlet:Lq}. We also recall some useful lemmas that we proved in \cite{BarMP26pA}. 
\subsection{Geometry and some elliptic estimates}
We work in domains with Ahlfors regular boundaries.
 \begin{defn}\label{dfn:Ahlfors}
A nonempty, possibly unbounded set $\Gamma \subset\R^\dmn$ is said to be $\pbdmn$-Ahlfors regular if there is a constant $A>0$ such that, if $\xi\in\Gamma $ and $r\in\R$ with $0<r\leq \diam \Gamma $, then
\begin{equation*}\frac{1}{A}r^\bdmn\leq \sigma(B(\xi,r)\cap\Gamma )\leq Ar^\bdmn\end{equation*}
where $\sigma$ denotes the $\bdmn$-dimensional Hausdorff measure on subsets of~$\R^\dmn$. 

\end{defn}
If
$\xi\in \Gamma$ and $r>0$, then we will denote
\begin{equation*}\Delta(\xi,r)=B(\xi,r)\cap\Gamma
.\end{equation*}
Ahlfors regularity allows us to estimate the size of regions near the boundary. This is the content of the following lemma, which was proven in full generality in \cite{BarMP26pA}, but some important precursors may be found in \cite[Lemma 2.3]{DavFM21}.

From now on, for any two nonnegative real numbers $A$ and $B$, we denote $A\approx B$ to mean that there exist positive constants $C$ and $c$, depending on parameters to be specified case by case, such that $cB\le A\le CB$.

\begin{lem}[]{\cite[Lemma 2.6]{BarMP26pA}}\label{lem:close:to:Ahlfors}
 Let $\Gamma\subset\R^\dmn$ be $\bdmn$-Ahlfors regular for some $0<\bdmn<\dmn$ with Ahlfors constant~$A$. If $\xi\in\Gamma$ and $0<r\leq R<2\diam\Gamma$, then
\begin{equation}
\label{eqn:close:to:Ahlfors} 
|\{y\in B(\xi,R):\dist(y,\Gamma)<r\}|
\approx R^\bdmn r^{\dmn-\pbdmn}
\end{equation}
where the implicit constants depend only on~$A$, $\bdmn$ and~$\dmn$.

Furthermore, if $\xi\in \Gamma$, $0<R<\infty$, and $\varpi>0$, then
\begin{equation}\label{eqn:delta:in:ball}
\int_{B(\xi,R)} \dist(y,\Gamma)^{\pbdmn-\pdmn+\varpi}\,dy
\approx R^{\pbdmn+\varpi}
\end{equation}
where the implicit constants depend only on~$A$, $\bdmn$, $\dmn$, and~$\varpi$.
\end{lem}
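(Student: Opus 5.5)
The upper bound in \eqref{eqn:close:to:Ahlfors} comes from a covering argument, and the lower bound from exhibiting enough disjoint small balls near $\Gamma$. Then \eqref{eqn:delta:in:ball} follows by a dyadic layer decomposition in the distance to $\Gamma$.

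\emph{Upper bound.} Any $y\in B(\xi,R)$ with $\dist(y,\Gamma)<r$ has a point $\zeta\in\Gamma$ with $|y-\zeta|<r$, and $\zeta\in\Delta(\xi,R+r)\subset\Delta(\xi,2R)$. Choose a maximal $r$-separated set $\{\zeta_j\}\subset\Delta(\xi,2R)$. The balls $B(\zeta_j,r/2)$ are disjoint, and by Ahlfors regularity each satisfies $\sigma(\Delta(\zeta_j,r/2))\gtrsim r^\bdmn$; here $r/2\le\diam\Gamma$ because $r\le R<2\diam\Gamma$. Their union lies in $\Delta(\xi,3R)$, whose measure is $\lesssim R^\bdmn$. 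If $3R>\diam\Gamma$, use $\sigma(\Gamma)\lesssim(\diam\Gamma)^\bdmn\lesssim R^\bdmn$ instead. So there are $\lesssim (R/r)^\bdmn$ points $\zeta_j$. The set in question lies in $\bigcup_j B(\zeta_j,2r)$, whose volume is $\lesssim (R/r)^\bdmn r^\dmn$, as required.

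\emph{Lower bound.} Pick a maximal $r$-separated set in $\Delta(\xi,R/2)$. By the same Ahlfors argument it has $\gtrsim (R/r)^\bdmn$ points; this uses $R/2<\diam\Gamma$, and the lower Ahlfors bound on $\Delta(\xi,R/2)$. The balls $B(\zeta_j,r/2)$ are disjoint, lie in $B(\xi,R)$, and consist of points at distance $<r$ from $\Gamma$. Hence the measure is $\gtrsim (R/r)^\bdmn r^\dmn$. The constant-size cases $R\approx\diam\Gamma$ need a little care, and I expect this bookkeeping to be the only fiddly point.

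\emph{Proof of \eqref{eqn:delta:in:ball}.} Set $\delta=\dist(\cdot,\Gamma)$ and $\gamma=\bdmn-\dmn+\varpi$. Every $y\in B(\xi,R)$ has $\delta(y)<R$. Split $B(\xi,R)$ into the layers $\{2^{-k-1}R\le\delta<2^{-k}R\}$ for $k\ge0$. The integrand is $\approx(2^{-k}R)^\gamma$ on the $k$-th layer, and by the upper bound (assuming $R<2\diam\Gamma$) the layer has volume $\lesssim R^\bdmn(2^{-k}R)^{\dmn-\bdmn}$. Summing gives
\begin{equation*}
\sum_{k\ge0} R^{\bdmn+\varpi}\,2^{-k\varpi}\approx R^{\bdmn+\varpi},
\end{equation*}
and the series converges exactly because $\varpi>0$.

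For the lower bound on the integral, the $k=0$ layer alone suffices. Take $r=R/2$: the set $\{\delta<R/2\}\cap B(\xi,R)$ has volume $\gtrsim R^\dmn$, while $\{\delta<c R\}\cap B(\xi,R)$ has volume $\lesssim c^{\dmn-\bdmn}R^\dmn$. Choosing $c$ small, the region $\{cR\le\delta<R/2\}\cap B(\xi,R)$ has volume $\gtrsim R^\dmn$. On it the integrand is $\approx R^\gamma$, so the integral is $\gtrsim R^{\bdmn+\varpi}$.

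\emph{Large $R$.} When $R\ge2\diam\Gamma$ (only possible for bounded $\Gamma$), write $D=\diam\Gamma$. The part of $B(\xi,R)$ with $\delta<D$ contributes $\lesssim D^{\bdmn+\varpi}$ by the previous case. On the remainder, $\delta\approx|y-\xi|$, so we integrate $|y-\xi|^{\gamma}$ over $D\lesssim|y-\xi|<R$. This gives $\approx R^{\bdmn+\varpi}$ when $\gamma+\dmn=\bdmn+\varpi>0$, which holds. Combining the two parts gives the bound $\approx R^{\bdmn+\varpi}$ in this regime as well.
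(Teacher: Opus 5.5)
Your argument is correct. The paper gives no proof of its own here: the lemma is quoted from \cite[Lemma 2.6]{BarMP26pA}, with precursor \cite[Lemma 2.3]{DavFM21}. Counting a maximal $r$-separated subset of $\Gamma$ via Ahlfors regularity and then summing over dyadic layers of $\dist(\cdot,\Gamma)$ is the standard route for such estimates.

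The only loose spot is the lower bound in the regime $2\diam\Gamma\le R\le 4\diam\Gamma$. There the set $\{\dist(\cdot,\Gamma)\ge\diam\Gamma\}\cap B(\xi,R)$ is only guaranteed to contain the annulus $\{2\diam\Gamma\le|y-\xi|<R\}$, which is empty when $R=2\diam\Gamma$, so your annulus computation does not give the lower bound. Use monotonicity instead: $\int_{B(\xi,R)}\dist(y,\Gamma)^{\bdmn-\dmn+\varpi}\,dy\ge\int_{B(\xi,\diam\Gamma)}\dist(y,\Gamma)^{\bdmn-\dmn+\varpi}\,dy\gtrsim(\diam\Gamma)^{\bdmn+\varpi}\approx R^{\bdmn+\varpi}$. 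Similarly, bounding the part where $\dist(\cdot,\Gamma)<\diam\Gamma$ means applying the first case on $B(\xi,2\diam\Gamma)$, which is the excluded endpoint; monotone convergence in $R$, with uniform constants, covers it.
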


Our domains will satisfy certain quantitative assumptions on openness and connectedness (as required by Theorem~\ref{thm:Dirichlet}).

If $d=n-1$, we require domains to satisfy the weak local John condition, which we now define.

\begin{defn}\label{dfn:local:John}
Let $\Omega\subset\R^\dmn$ be open.

Let $x\in\partial\Omega$ and let $y\in\Omega$. We say that $\zeta$ is a $\lambda$-carrot path from $x$ to $y$ if $\zeta$ is a connected rectifiable path, which we require to be parameterized by arc length, such that $\zeta:[0, T]\to\Omega\cup\{x\}$ is continuous, $\zeta(0)=x$, $\zeta(T)=y$, and $\delta(\zeta(t))=\dist(\zeta(t),\partial\Omega)\geq \lambda t$ for all $t\in [0, T]$.

Suppose that $\partial\Omega$ is $\bdmn$-Ahlfors regular for some $0<\bdmn<\dmn$.
Let $y\in\Omega$. Let $\theta$, $\lambda\in (0,1]$ and let $N\geq 2$. We say that $y\in\Omega$ is a $(\theta,\lambda,N)$-weak local John point if there is a Borel set $F\subset B(y,N\delta(y))\cap\partial\Omega$ such that $\sigma(F)\geq \theta\sigma( B(y,N\delta(y))\cap\partial\Omega )$ and such that, if $x\in F$, then there is a $\lambda$-carrot path from $x$ to~$y$.

We say that $\Omega$ is a weak local John domain if there exist constants $\theta\in (0,1]$, $\lambda\in (0,1]$ and $N\geq 2$ such that every $y\in\Omega$ is a $(\theta,\lambda,N)$-weak local John point.
\end{defn}

When the dimension $\bdmn$ of $\partial\Omega$ is strictly larger than $\dmnMinusOne$, we will require the stronger conditions of interior corkscrews and Harnack chains. 

\begin{defn}[Interior corkscrew condition]\label{dfn:iCS}
We say that $\Omega$ satisfies the {interior corkscrew condition} if there is a constant $\corkscrew\in (0,1)$ such that, if $x\in \partial\Omega$ and $0<r<\diam\Omega$, then there is a point $A_r(x)$ (called a corkscrew point) such that
\begin{equation*}
A_r(x)\in B(x,r)\quad\text{and}\quad B(A_r(x), \corkscrew r)\subset\Omega.
\end{equation*}
\end{defn}

\begin{defn}[Harnack chains]\label{dfn:Harnack}
Let $\Omega\subset\R^\dmn$. We say that $\Omega$ satisfies the \emph{Harnack chain condition} if there are constants $M\geq 1$ and $m\in (0,1)$ such that if $x$, $y\in \Omega$, then there is an integer~$N$, depending only on the ratio $|x-y|/\delta$ for $\delta=\min(\dist(x,\partial\Omega),\dist(y,\partial\Omega))$, such that there is a chain of open balls $\{B_k\}_{k=1}^N$ with $x\in B_1$, $y\in B_N$, $B_k\cap B_{k+1}\neq\emptyset$ for all $1\leq k<N$, and $m\diam(B_k)\leq \dist(B_k,\partial\Omega)\leq M\diam(B_k)$.
\end{defn}

\begin{rmk}
By connecting corkscrew points with Harnack chains, we can construct carrot paths to a point $y$ in the domain from \emph{any} nearby boundary point~$x$. Thus, any domain that satisfies both the interior corkscrew and Harnack chain conditions is a weak local John domain.

If $0<\bdmn<\dmnMinusOne$, then the only possible choice for~$\Omega$ is $\Omega=\R^\dmn\setminus\partial\Omega$ (that is, a lower dimensional boundary cannot divide $\R^\dmn$ into multiple open connected components).
Furthermore, if $\bdmn<\dmnMinusOne$ and $\Gamma$ is unbounded (as required by Theorem~\ref{thm:Dirichlet}), then, by \cite[Lemmas 2.1 and~11.6]{DavFM21}, $\Omega$ satisfies both the interior corkscrew and Harnack chain conditions, and so $\Omega$ is necessarily a weak local John domain.

If $\bdmn\geq\dmnMinusOne$, then the weak local John condition is not guaranteed by the Ahlfors condition. However, it is often a necessary condition in the theory; see, for example, \cite{AzzHMMT20, HofMMTZ21}, where it is shown that given Ahlfors regularity, interior corkscrews, and certain natural conditions on the coefficients~$A$, the weak local John condition is a necessary condition for well posedness of the $L^q$ Dirichlet problem (that is, for the estimate~\eqref{eqn:N:bound}).

In the companion paper \cite{BarMP26pA}, we conjectured that the interior corkscrew and Harnack chain conditions in Theorem~\ref{thm:Dirichlet} can be relaxed to the weak local John condition even in the lower codimensional case $\dmnMinusOne<\bdmn<\dmn$. Indeed, we used this additional assumption only because existing literature needs it to ensure that certain elliptic estimates hold up to the boundary (e.g., Lemma \ref{lem:boundary:DGN} below).
\end{rmk}

We now define the elliptic operators we work with, and we state two results on elliptic equations that will give meaning to some quantities in Theorems \ref{thm:Dirichlet} and \ref{thm:Dirichlet:Lq}.

We work with an elliptic operator in divergence form
\begin{equation}\label{eq:operator}
L:=-\Div(A\nabla),
\end{equation}
where $A$ satisfies the positive definiteness and boundedness conditions.
\begin{defn}
Let $\Omega\subset\R^\dmn$ be an open set with $\bdmn$-Ahlfors regular boundary, and let $A:\Omega\to M_\dmn(\R)$ be a measurable function with values in the space of $\dmn\times\dmn$-matrices with real coefficients. We say that $A$ satisfies the positive definiteness and boundedness conditions if there are positive numbers $\lambda$ and $\Lambda$ such that
\begin{align}
\label{eqn:elliptic:introduction}
{\xi}\cdot A(x)\xi
&\geq
\delta(x)^{\bdmn+1-\pdmn} \lambda\|\xi\|^2 
,\\ 
\label{eqn:elliptic:introduction:1}
|\eta\cdot A(x)\xi|
&\leq \delta(x)^{\bdmn+1-\pdmn} \Lambda\|\xi\|\,\|\eta\| \end{align}
for all $\eta$, $\xi\in\R^\dmn$ and all $x\in\Omega$, where
\begin{equation*}\delta(x):=\dist(x,\partial\Omega).\end{equation*}

\end{defn}

We say that $Lu=-\Div \vec H$ in an open set $\Psi\subseteq\Omega$ if
$\vec H\in L^1_{loc}(\Psi)$, $u\in W^{1,1}_{loc}(\Psi)$, and
\begin{equation}\label{eqn:L}
\int_\Psi\nabla\varphi\cdot A\nabla u=\int_\Psi \nabla\varphi\cdot \vec H\text{ for all }\varphi\in C^\infty_0(\Psi).
\end{equation}
We want to prescribe boundary values in the averaged sense.
\begin{defn}\label{dfn:trace}
Let $\Omega\subset\R^\dmn$ be open, let $u\in L^1_{loc}(\overline\Omega)$, and let $\xi\in \partial\Omega$.

If $U\in\R$, we say that
$\Tr u(\xi)=U$ if
\begin{equation*}\lim_{r\to 0^+} \frac{1}{r^\dmn} \int_{B(\xi,r)\cap\Omega} |u-U|=0.\end{equation*}
\end{defn}

The gradient of solutions has the same local integrability as $\vec {H} $ in a neighbourhood of $2$.
This is a classic result of Meyers \cite{Mey63}, stating the following. There is a $p^+=p^+_L>2$ such that, if $2\leq\beta<p^+$, then there is a $C_{\beta}<\infty$ such that if $\vec H\in L^\beta(B(y,2r))$, $B(y,4r)\subset\Omega$, and $Lu=-\Div(\delta^{1+\bdmn-\pdmn}\vec H)$ in $B(y,2r)$, then
\begin{align}
\label{eqn:interior:Meyers}
\biggl(\fint_{B(y,r)}|\nabla u|^\beta \biggr)^{1/\beta}
&\leq C_{q,\beta}\biggl(\fint_{B(y,2r)} |\nabla u|^2\biggr)^{1/2}
+C_{q,\beta}\biggl(\fint_{B(y,2r)}|\vec H|^\beta \biggr)^{1/\beta}.
\end{align}
It is possible to bound $p^+-2$ from below by a positive number depending only on the ellipticity constants of~$A$. In many cases, it is possible to improve beyond these estimates on~$p^+_L$; for example, if $A$ is constant\footnote{By the ellipticity conditions~\eqref{eqn:elliptic:introduction},~\eqref{eqn:elliptic:introduction:1}, this can only happen if $\bdmn=\dmnMinusOne$.} then $p^+_L=\infty$.
Meyers' estimate~\eqref{eqn:interior:Meyers} allows us to extend the subaveraging from $L^2$ to $L^\beta$ in the bound~\eqref{eqn:Besov:estimate} (see \cite{BarMP26pA}).

The standard De Giorgi-Nash-Moser theory ensures that solutions with $\vec {H} = 0$ are locally Hölder continuous inside $\Omega$, and in the literature there are many generalizations of the De Giorgi-Nash-Moser Hölder continuity result that hold up to the boundary. In the domains we consider, the same holds up to the boundary for solutions with zero trace. 
\begin{lem}[Boundary Hölder continuity of solutions]\label{lem:boundary:DGN}
Let $\Omega\subset\R^\dmn$ be as in Theorem~\ref{thm:Dirichlet}. 

There exists a constant $\alpha>0$ depending only on the standard parameters with the following significance.

Let $\xi\in\partial\Omega$ and let $r\in(0,\diam\partial\Omega)$. Let $u$ be such that
\begin{align*}&\int_{B(\xi,2r)\cap\Omega} |\nabla u|^2 \delta^{-\pdmn+\bdmn+1}<\infty,
\\
Lu&=0 \text{ in }B(\xi,2r)\cap\Omega,
\\
\Tr u&=0 \text{ on }\Delta(\xi,2r).\end{align*}
Then for all $s$ with $0<s\leq r/2$ we have that
\begin{equation*}
\sup_{B(\xi,s)\cap\Omega} |u|\leq C\frac{s^\alpha}{r^\alpha} \frac{1}{r^{1+\bdmn} } \int_{B(\xi,r)\cap\Omega} |u| \delta^{1+\bdmn-\pdmn}
\approx
\frac{s^\alpha}{r^\alpha} \frac{1}{\int_{B(\xi,r)} \delta^{1+\bdmn-\pdmn}} \int_{B(\xi,r)\cap\Omega} |u| \delta^{1+\bdmn-\pdmn}
.\end{equation*}
\end{lem}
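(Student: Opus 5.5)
The plan is to reduce the boundary Hölder estimate to two standard ingredients, a boundary Caccioppoli/Moser $L^\infty$ bound and a boundary oscillation decay, both obtained by extending $u$ by zero across $\partial\Omega$. Set $\tilde u=u$ in $B(\xi,2r)\cap\Omega$ and $\tilde u=0$ on $B(\xi,2r)\setminus\Omega$. Since $\Tr u=0$ on $\Delta(\xi,2r)$ and $\nabla u\in L^2(\delta^{1+\bdmn-\pdmn})$, the truncations $(u-k)^\pm$ for $k\geq 0$ (respectively $k\le 0$), multiplied by a cutoff $\eta\in C^\infty_0(B(\xi,2r))$, are admissible test functions. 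This step relies on the trace theory for the weighted space $W^{1,2}(\delta^{1+\bdmn-\pdmn})$ with Ahlfors regular $\partial\Omega$, as in \cite{DavFM21} for $\bdmn<\dmnMinusOne$, or the weak local John, corkscrew and Harnack chain setting of \cite{BarMP26pA}. The point is that functions with zero trace can be approximated by functions vanishing near $\Delta(\xi,2r)$. Testing the equation then gives the boundary Caccioppoli inequality
\[
\int_{B(\xi,\rho)\cap\Omega}|\nabla (u-k)^\pm|^2\delta^{1+\bdmn-\pdmn}\le \frac{C}{\rho^2}\int_{B(\xi,2\rho)\cap\Omega}|(u-k)^\pm|^2\delta^{1+\bdmn-\pdmn}.
\]

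Next I run De Giorgi (or Moser) iteration in the weighted space $\R^\dmn$ with measure $\mu=\delta^{1+\bdmn-\pdmn}dx$. This weight is doubling; this is the case $\varpi=1$ of Lemma~\ref{lem:close:to:Ahlfors}, giving $\mu(B(\xi,\rho))\approx\rho^{1+\bdmn}$, and I would extend the estimate to balls centered away from the boundary. The weight also supports a Sobolev–Poincaré inequality, as in \cite{DavFM21}. Running the iteration on the truncations of $\tilde u$ yields
\[
\sup_{B(\xi,r/2)\cap\Omega}|u|\le \frac{C}{\mu(B(\xi,r))}\int_{B(\xi,r)\cap\Omega}|u|\,d\mu.
\]
Passing from $L^2$ to $L^1$ averages is done by the usual interpolation or covering trick. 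Together with $\mu(B(\xi,r))\approx r^{1+\bdmn}$, this gives the two equivalent right-hand sides in the statement, since they differ only by this normalization.

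The decay factor $(s/r)^\alpha$ comes from an oscillation reduction at the boundary. Let $M(\rho)=\sup_{B(\xi,\rho)\cap\Omega}u^+$. Because $\tilde u\le 0$ on the set $B(\xi,\rho)\setminus\Omega$, the function $M(\rho)-\tilde u$ is a nonnegative supersolution that equals $M(\rho)$ on a "large" set. For $\bdmn<\dmnMinusOne$ this set is the thin boundary piece, which is large in the sense of weighted capacity via the Ahlfors lower bound. Otherwise it is the complement region given by the corkscrew, Harnack and John geometry, exactly as in \cite{DavFM21, BarMP26pA}. A weak Harnack inequality, or De Giorgi's second lemma with a capacitary density estimate, then gives $M(\rho/4)\le \theta M(\rho)$ with a fixed $\theta<1$; the same argument applies to $u^-$. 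Iterating from $\rho=r/2$ down to $s$ gives $\sup_{B(\xi,s)\cap\Omega}|u|\le C(s/r)^\alpha\sup_{B(\xi,r/2)\cap\Omega}|u|$ with $\alpha=\log_4(1/\theta)$. Combining this with the $L^\infty$ bound of the previous paragraph proves the lemma.

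The main obstacle is this density/capacity step: showing that the region where $\tilde u$ is forced to vanish has uniformly positive relative capacity for the degenerate weight $\delta^{1+\bdmn-\pdmn}$. For $\bdmn\ge\dmnMinusOne$ this is precisely where the corkscrew and Harnack chain (or John) hypotheses enter. I would quote it from the existing literature cited above rather than reprove it.
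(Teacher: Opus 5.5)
The paper gives no argument for this lemma. It refers to \cite[Section 6.2]{BarMP26pA} for $\bdmn=1$, $\dmn=2$ in weak local John domains, and to the references collected in \cite[Lemma 6.2]{BarMP26pA} for all other cases. Your outline is the standard De Giorgi--Nash--Moser mechanism behind those references: boundary Caccioppoli inequality, weighted $L^\infty$ bound, and oscillation decay from a capacitary estimate, with the hard inputs quoted. In substance you agree with the paper, but one step is wrong as written.

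For $\bdmn\geq\dmnMinusOne$ you place the ``large set'' on which the supersolution equals $M(\rho)$ in ``the complement region given by the corkscrew, Harnack and John geometry.'' Those hypotheses are all \emph{interior} conditions and say nothing about $\R^\dmn\setminus\Omega$. That set may have no interior points near $\xi$, or none at all. For example, take $\Omega=\R^2\setminus\Gamma$ with $\Gamma$ an unbounded four-corner-type Cantor set; this is a uniform domain with $1$-Ahlfors regular boundary. Then $B(\xi,\rho)\setminus\Omega=\Gamma\cap B(\xi,\rho)$ is Lebesgue-null.

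So in every case you need a lower bound, of order $\mu(B(\xi,\rho))/\rho^2$, on the (weighted) capacity of the boundary piece $\Delta(\xi,\rho)$ itself. For $\bdmn=\dmnMinusOne$ this is the classical capacity density condition, and it follows from the lower Ahlfors bound alone, exactly as in your argument for $\bdmn<\dmnMinusOne$. For $\bdmn>\dmnMinusOne$ it is obtained in the literature from the trace and Poincaré theory for the weight on $\Omega$ in \cite{DavFM23B}. Note that \cite{DavFM21} covers only $\bdmn<\dmnMinusOne$. The interior corkscrew, Harnack chain or weak local John hypotheses are actually used there, and in the passage from the averaged condition $\Tr u=0$ of Definition~\ref{dfn:trace} to admissible test functions $(u-k)^\pm\eta$ in your first step. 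They are not used to produce an exterior region.

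A smaller correction: the nonnegative supersolution should be $M(\rho)-\tilde u^+$, not $M(\rho)-\tilde u$. The zero extension of $u^+$ is a subsolution, while $\tilde u$ itself need not be a sub- or supersolution across $\partial\Omega$.
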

The proof in the case $\bdmn=1$ and $\dmn=2$ in a weak local John domain is \cite[Section 6.2]{BarMP26pA}; citations for the result in all other cases may be found in \cite[Lemma 6.2]{BarMP26pA}.
\subsection{Besov spaces on Ahlfors regular sets}\label{sec:bdry:space:dfn}

We now introduce the spaces in which we want to prescribe boundary values.

If $\Gamma$ is a $\bdmn$-Ahlfors regular set in $\R^\dmn$, and if $d$ denotes Euclidean distance and $\sigma$ denotes surface measure, then $(\Gamma,d,\sigma)$ is a space of homogeneous type. Homogeneous Besov (and Triebel-Lizorkin) spaces on spaces of homogeneous type were introduced in \cite{HanS94} and defined there in terms of approximations to the identity. Their definition has been generalized in other papers, for example, in \cite{Han98, HanMY08}. This definition is very general but also very technical. We will instead use two related characterizations of Besov spaces.

It was shown in \cite[Theorem~4.1]{MulY09} that the standard definition of the Besov space $\dot B^{s,p}_p(\Gamma)$ from \cite{HanMY08} on an Ahlfors regular set (or on certain more general sets) coincides with the following definition if $p\geq 1$ and $0<s<1$.

\begin{defn}
\label{dfn:Slobodeckij}
Let $\Gamma\subseteq\R^\dmn$ be a $\pbdmn$-Ahlfors regular set, and let $-\frac{\bdmn}{p}<s<\infty$. The Slobodeckij space $\dot \Lambda^{s,p}(\Gamma)$ is defined to be the set of all equivalence classes of locally integrable functions $f:\Gamma\to\R$ such that the norm (or quasi-norm, if $0<p<1$)
\begin{equation*}\|f\|_{\dot \Lambda^{s,p}(\Gamma)}
=
\biggl(\int_{\Gamma}\int_\Gamma 
\frac{|f(x)-f(y)|^p}{|x-y|^{\bdmn+ps}}
\,d\sigma(y)\,d\sigma(x)\biggr)^{1/p}
\end{equation*}
is finite, where the equivalence relation is the addition of constants.
\end{defn}
\begin{rmk}
It is clear that if $1\leq p\leq\infty$, then the Slobodeckij spaces are Banach spaces. 
We think of $s$ as a parameter measuring smoothness, while $p$ measures integrability.
We will primarily use the Slobodeckij spaces in the case $p\geq 1$ (in fact, we will use them as our definition of Besov spaces). We state our main results exclusively for the range $0<s<1$; outside this interval, the Slobodeckij seminorm fails to capture fractional smoothness and leads to degenerate spaces.
\end{rmk}

Recall that if $p\leq 1$, the Lebesgue spaces $L^p(\R^\bdmn)$ are in many ways unsuited to harmonic analysis and the Hardy spaces $H^p(\R^\bdmn)$ are often studied instead. Furthermore, recall that Hardy spaces have an atomic characterization. Atomic characterizations also exist for Besov spaces; see \cite{FraJ85} in the case of Euclidean spaces and \cite{HanLY99, HanY03} in the case of the inhomogeneous Besov space $B^{s,p}_p(\Gamma)=\dot B^{s,p}_p(\Gamma)\cap L^p(\Gamma)$ on a space $\Gamma$ of homogeneous type. 
We prefer to work with homogeneous Besov spaces, and so our atomic characterization differs somewhat from that of \cite{HanY03}. It is natural to conjecture that our atomic spaces $\dot A^{s,p}(\Gamma)$ (see Definition~\ref{dfn:atom:space}) coincide with the homogeneous Besov spaces $\dot B^{s,p}_p(\Gamma)$ as defined, for example, in \cite{Han98}, but we have not been able to find a proof of this result in the literature.

Following \cite{HanLY99, HanY03}, we define blocks as follows.
\begin{defn}[{\cite[Definition~2.1]{HanLY99}; \cite[Definition~7]{HanY03}}]
\label{dfn:atom}
Let $\Gamma \subset\R^\dmn$ be a $\pbdmn$-Ahlfors regular set, let $z\in\Gamma$, and let $\varrho>0$. 
A measurable function $a:\Gamma \to\R$ is said to be a 
\emph{block in $\Delta(z,\varrho)$} if 
\begin{itemize}
\item $\supp a\subset \Delta(z,\varrho)$,
\item $|a(x)|\leq \varrho^{-\pbdmn/2}$ for all $x\in \Delta(z,\varrho)$ (and thus for all $x\in\Gamma$),
\item $|a(x)-a(y)|\leq |x-y|\varrho^{-\pbdmn/2-1}$ for all $x$, $y\in\Gamma$.
\end{itemize}
\end{defn}

We have the following embedding results for blocks in the spaces defined above.
\begin{lem}\label{lem:atom:in:M1p} If $a$ is a 
block in $\Delta(z,\varrho)$, if $0<s<1$, and if $0<p\leq\infty$, then there is a constant $C_{A,p,s}$ depending only on $A$, $p$, and~$s$ such that
\begin{align*}
\|a\|_{L^p(\Gamma)}&\leq A^{1/p}\varrho^{\pbdmn/p-\pbdmn/2}
,\\
\|a\|_{\dot \Lambda^{s,p}(\Gamma)}
&\leq C_{A,s,p}\varrho^{\pbdmn/p-\pbdmn/2-s}
,\end{align*}
where $A$ is the Ahlfors constant for~$\Gamma$.
\end{lem}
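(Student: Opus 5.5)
The $L^p$ bound is immediate from the definition of a block. Since $\supp a\subset\Delta(z,\varrho)$ and $|a|\le\varrho^{-\bdmn/2}$, we get
\begin{equation*}
\|a\|_{L^p(\Gamma)}^p\le \varrho^{-p\bdmn/2}\,\sigma(\Delta(z,\varrho))\le A\varrho^{\bdmn-p\bdmn/2}
\end{equation*}
by Ahlfors regularity (Definition~\ref{dfn:Ahlfors}). This gives $A^{1/p}\varrho^{\bdmn/p-\bdmn/2}$. One caveat: the upper Ahlfors bound is stated only for $r\le\diam\Gamma$. If $\varrho>\diam\Gamma$, then $\sigma(\Delta(z,\varrho))=\sigma(\Gamma)\le A(\diam\Gamma)^\bdmn\le A\varrho^\bdmn$, so the bound still holds. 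For $p=\infty$ the bound is trivial.

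For the Slobodeckij seminorm (with $p<\infty$) I would split the double integral. Write $\Gamma\times\Gamma$ as the union of the region $R_1=\{(x,y):x\in\Delta(z,2\varrho)\text{ or }y\in\Delta(z,2\varrho)\}$ and its complement. On the complement both points lie outside the support, so $a(x)-a(y)=0$ there. By symmetry it then suffices to bound
\begin{equation*}
2\int_{\Delta(z,2\varrho)}\int_\Gamma\frac{|a(x)-a(y)|^p}{|x-y|^{\bdmn+ps}}\,d\sigma(y)\,d\sigma(x).
\end{equation*}
For fixed $x$, I would split the inner integral into the pieces $|x-y|<\varrho$ and $|x-y|\ge\varrho$.

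On the near piece, use the Lipschitz bound $|a(x)-a(y)|\le|x-y|\varrho^{-\bdmn/2-1}$. This gives $\varrho^{-p\bdmn/2-p}\int_{\Delta(x,\varrho)}|x-y|^{p-ps-\bdmn}\,d\sigma(y)$. Decompose into dyadic annuli $2^{-k-1}\varrho\le|x-y|<2^{-k}\varrho$ and apply Ahlfors regularity. The integral is then controlled by $\sum_k(2^{-k}\varrho)^{p-ps}\lesssim\varrho^{p(1-s)}$, and the geometric series converges because $s<1$.

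On the far piece, use $|a(x)-a(y)|\le 2\varrho^{-\bdmn/2}$. This gives $\varrho^{-p\bdmn/2}\sum_{k\ge0}(2^k\varrho)^{-ps}$, where the annuli are only needed up to scale about $\diam\Gamma$, and the series converges because $s>0$. The result is $\lesssim\varrho^{-p\bdmn/2-ps}$.

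Both pieces give $\varrho^{-p\bdmn/2-ps}$ uniformly in $x$. Integrating over $x\in\Delta(z,2\varrho)$, which has measure $\lesssim\varrho^\bdmn$, yields $C\varrho^{\bdmn-p\bdmn/2-ps}$. Taking $p$-th roots gives the claimed bound.

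The only delicate points are the two routine ones. The first is the annular sums, where convergence uses exactly $0<s<1$ together with Ahlfors regularity for the lower bound on annulus measures. The second is the case of bounded $\Gamma$, where radii exceeding $\diam\Gamma$ must be truncated; the upper bound $\sigma(\Delta(x,r))\lesssim r^\bdmn$ still holds for all $r$, which is all we need. Nothing here depends on $p\ge1$: the argument uses only pointwise bounds, so it works for $0<p<1$ as well, with constants depending on $A,\bdmn,p,s$.
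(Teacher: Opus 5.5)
Your proof is correct and is essentially the paper's argument: reduce to pairs with one point in $\Delta(z,2\varrho)$, use the Lipschitz bound at short range and the sup bound at long range, and sum over dyadic annuli. The paper splits according to whether $y\in\Delta(z,2\varrho)$ rather than whether $|x-y|<\varrho$, which makes no difference; note also that only the upper Ahlfors bound is used, not a lower bound on annulus measures as your closing paragraph says. The one case you left unwritten, the seminorm for $p=\infty$, follows at once from $|a(x)-a(y)|\le\min\{|x-y|\varrho^{-\bdmn/2-1},\,2\varrho^{-\bdmn/2}\}\le 2\varrho^{-\bdmn/2-s}|x-y|^s$, which is how the paper disposes of it.
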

\begin{proof}
Fix some such block~$a$. 

The bound on~$\|a\|_{L^p(\Gamma)}$ follows immediately from the bound on $\|a\|_{L^\infty(\Gamma)}$, the support condition on~$a$, and the Ahlfors condition on $\sigma(\Delta(z,\varrho))$.

We turn to $\|a\|_{\dot \Lambda^{s,p}(\Gamma)}$. If $p=\infty$ the result follows immediately by considering the two cases $|x-y|<\varrho$ and $|x-y|\geq \varrho$ separately (and we may take $C_{A,s,p}=2$). Otherwise, if $x\in\Delta(z,2\varrho)$ then $|a(x)|\leq \varrho^{-\pbdmn/2}$ and $|a(x)-a(y)| \leq |x-y|\varrho^{-\pbdmn/2-1}$ for all $y\in \Delta(z,2\varrho)$. Thus, for any $x\in \Delta(z,2\varrho)$,
\begin{align*}
\int_{\Delta(z,2\varrho)}
\frac{|a(x)-a(y)|^p}{|x-y|^{\bdmn+ps}}
\,d\sigma(y)
&\leq
\sum_{j=-2}^\infty 
\int_{\Delta(x,2^{-j}\varrho)\setminus \Delta(x,2^{-j-1}\varrho)}
\frac{|x-y|^p\varrho^{-p\pbdmn/2-p}}{|x-y|^{\bdmn+ps}}
\,d\sigma(y)
,\\
\int_{\Gamma\setminus\Delta(z,2\varrho)}
\frac{|a(x)-a(y)|^p}{|x-y|^{\bdmn+ps}}
\,d\sigma(y)
&\leq
\sum_{j=0}^\infty 
\int_{\Delta(x,2^{j+1}\varrho)\setminus \Delta(x,2^{j}\varrho)}
\frac{\varrho^{-p\pbdmn/2}}{|x-y|^{\bdmn+ps}}
\,d\sigma(y)
.\end{align*}
Applying the Ahlfors regularity of~$\Gamma$, we see that
\begin{align*}
\int_{\Delta(z,2\varrho)}
\int_{\Gamma}
\frac{|a(x)-a(y)|^p}{|x-y|^{\bdmn+ps}}
\,d\sigma(y)\,d\sigma(x)
&\leq
C_{A,p,s}\varrho^{\bdmn-p\pbdmn/2-ps}
.\end{align*}
The same bound is of course valid with the roles of $x$ and~$y$ reversed, while if $x$ and $y$ are both in $\Gamma\setminus \Delta(z,2\varrho)$ then the integrand is zero; this establishes the bound on $\|a\|_{\dot \Lambda^{s,p}(\Gamma)}$.
\end{proof}

As an immediate consequence of this lemma and of the completeness of the space $\dot \Lambda^{s,p}(\Gamma)$, 
we have that if $\{a_k\}_{k=1}^\infty$ is a sequence of functions, each $a_k$ is a block in $\Delta(z_k,\varrho_k)$ for some $z_k\in\Gamma$ and some $\varrho_k>0$, and if $\{\lambda_k\}_{k=1}^\infty$ is a sequence of real numbers such that
\begin{equation*}\sum_{k=1}^\infty |\lambda_k| \varrho_k^{\bdmn/p-\pbdmn /2-s}<\infty,\end{equation*}
then $\sum_{k=1}^\infty \lambda_k a_k$ is a convergent series in $\dot \Lambda^{s,p}(\Gamma)$ provided $0<s<1$, $1\leq p\leq \infty$. 

If $p\leq1$, then 
by the standard embedding results for sequence spaces,
\begin{equation*}
\sum_{k=1}^\infty |\lambda_k| 
\varrho_k^{\bdmn -\pbdmn /2-(s+\bdmn-
\bpdmn/p)}
\leq
\biggl(\sum_{k=1}^\infty |\lambda_k|^p \varrho_k^{\bdmn -\pbdmn p/2-sp}\biggr)^{1/p}\end{equation*}
and so if the right-hand side is finite then $\sum_{k}\lambda_k a_k$ converges in $\dot \Lambda^{s+\bdmn-
\bpdmn/p,1}(\Gamma)$. We may thus define atomic spaces for $\bdmn/(\bdmn+s)<p\leq 1$ as follows.
\begin{defn}
\label{dfn:atom:space}
Let $\Gamma \subset\R^\dmn$ be a $\pbdmn $-Ahlfors regular set, 
let $0<s< 1$, and let $\pbdmn/(\bdmn+s)<p\leq 1$.

We let 
\begin{multline*}\dot A^{s,p}(\Gamma)=\Bigl\{\sum_{k=1}^\infty \lambda_k a_k: \sum_{k=1}^\infty |\lambda_k|^p \varrho_k^{\bdmn-\pbdmn p/2-sp}<\infty\\
\text{where each $a_k$ is a block in $\Delta(z_k,\varrho_k)$ for some $z_k\in\Gamma$}\Bigr\}
\end{multline*}
where the convergence is in the Banach space $\dot \Lambda^{s+\bdmn-
\bpdmn/p,1}(\Gamma)$,
and let the norm in $\dot A^{s,p}(\Gamma)$ be given by
\begin{multline*}\|f\|_{\dot A^{s,p}(\Gamma)}=\inf\Bigl\{\Bigl(\sum_{k=1}^\infty |\lambda_k|^p \varrho_k^{\bdmn -\pbdmn p/2-sp}\Bigr)^{1/p}:
\\f=\sum_{k=1}^\infty \lambda_k a_k\text{ for some 
blocks $a_k$ in $\Delta(z_k,\varrho_k)$}\Bigr\}.\end{multline*}
\end{defn}

\begin{rmk}\label{rmk:atom:block}
A block $a$ in $\Delta(z,\varrho)$ is called an atom if, in addition to the conditions of being a block, it satisfies $\int_\Gamma a\,d\sigma=0$.
Let $\widetilde A^{s,p}(\Gamma)$ be defined analogously to $\dot A^{s,p}(\Gamma)$ with the functions $a_k$ required to be atoms in $\Delta(z_k,\varrho_k)$ (and not merely to be blocks).

It is often customary to consider Hardy-type spaces based on atoms. However, if $\Gamma$ is unbounded, $\widetilde A^{s,p}(\Gamma)=\dot A^{s,p}(\Gamma)$ with an equivalence of norms, and therefore this restriction is not meaningful in the present context.  

Indeed, let $\varphi:\R^\dmn\to[0,\infty)$ be smooth and satisfy $\varphi(x)=0$ if $|x|\geq 1$, $|\nabla\varphi|\leq 1$, and $\varphi(x)=1/3$ if $|x|<1/3$. If $z\in\Gamma$ and $r>0$, then
\begin{equation*}\varphi_{z,r}(x)= r^{-\pbdmn/2}\varphi((x-z)/r)\end{equation*}
is a block in $\Delta(z,r)$. Furthermore,
\begin{equation}\label{eqn:lbatom}
\int_{\Gamma} \varphi_{z,r}\,d\sigma
\geq \frac{1}{3A} (r/3)^{\bdmn-\bdmn/2}=cr^{\bdmn/2}.\end{equation}
If $b$ is a block in $\Delta(z,\varrho)$, then $\int b\,d\sigma\leq A\varrho^{\bdmn/2}$. We have that
\begin{equation*}b(x)=b(x)-\frac{\int_{\Gamma} b\,d\sigma }{\int_\Gamma \varphi_{z,\varrho}\,d\sigma} \varphi_{z,\varrho}(x)
+\sum_{j=1}^\infty \frac{\int_{\Gamma} b\,d\sigma }{\int_\Gamma \varphi_{z,2^{j-1}\varrho}\,d\sigma} \varphi_{z,2^{j-1}\varrho}(x)-\frac{\int_{\Gamma} b\,d\sigma }{\int_\Gamma \varphi_{z,2^{j}\varrho}\,d\sigma} \varphi_{z,2^{j}\varrho}(x).\end{equation*}
But 
\begin{equation*}b-\frac{\int_{\Gamma} b\,d\sigma }{\int_\Gamma \varphi_{z,\varrho}\,d\sigma} \varphi_{z,\varrho}=\biggl(1+ \frac{|\int_{\Gamma} b\,d\sigma|} {|\int_\Gamma \varphi_{z,\varrho}\,d\sigma|} \biggr)a_0\end{equation*} 
for a block $a_0$ in $\Delta(z,\varrho)$; but $a_0$ integrates to zero and so must be an atom. Note that the quantity $1+ \frac{|\int_{\Gamma} b\,d\sigma|} {|\int_\Gamma \varphi_{z,\varrho}\,d\sigma|}$ is bounded above depending only on $A$ and~$\bdmn$. Similarly
\begin{align*}\frac{\int_{\Gamma} b\,d\sigma }{\int_\Gamma \varphi_{z,2^{j-1}\varrho}\,d\sigma} \varphi_{z,2^{j-1}\varrho}-\frac{\int_{\Gamma} b\,d\sigma }{\int_\Gamma \varphi_{z,2^{j}\varrho}\,d\sigma} \varphi_{z,2^{j}\varrho}
&=\biggl(2^{\bdmn/2}\frac{|\int_{\Gamma} b\,d\sigma|} {|\int_\Gamma \varphi_{z,2^{j-1}\varrho}\,d\sigma|} 
+\frac{|\int_{\Gamma} b\,d\sigma|} {|\int_\Gamma \varphi_{z,2^{j}\varrho}\,d\sigma|} \biggr)a_j\\
&=\lambda_ja_j
\end{align*}
for an atom $a_j$ in $\Delta(z,2^j\varrho)$. Further $|\lambda_j|\leq C2^{-j\bdmn/2}$.

Thus
\begin{equation*}\sum_{j=0}^\infty \lambda_j^p (2^j\varrho)^{\bdmn-\bdmn p/2-ps}
\leq C\varrho^{\bdmn-\bdmn p/2-ps}\sum_{j=0}^\infty 2^{j(\bdmn-\bdmn p-ps)}\end{equation*}
and the sum converges whenever $p>\frac{\bdmn}{\bdmn+s}$, and so every block is in the analogue to $\dot A^{s,p}(\Gamma)$ with atoms required.

If $\Gamma$ is bounded, then~\eqref{eqn:lbatom} holds only for $r>0$ such that $\frac{r}{3}\le\diam(\Gamma)$. Therefore, we cannot write a block $b$ as a telescopic sum of atoms as we did before. Instead, we can write
\begin{align*}
b(x)&=\frac{\int_{\Gamma} b\,d\sigma }{\int_\Gamma \varphi_{z,2^{J}\varrho}\,d\sigma} \varphi_{z,2^{J}\varrho}(x)+b(x)-\frac{\int_{\Gamma} b\,d\sigma }{\int_\Gamma \varphi_{z,\varrho}\,d\sigma} \varphi_{z,\varrho}(x)\\
&+\sum_{j=1}^{J} \frac{\int_{\Gamma} b\,d\sigma }{\int_\Gamma \varphi_{z,2^{j-1}\varrho}\,d\sigma} \varphi_{z,2^{j-1}\varrho}(x)-\frac{\int_{\Gamma} b\,d\sigma }{\int_\Gamma \varphi_{z,2^{j}\varrho}\,d\sigma} \varphi_{z,2^{j}\varrho}(x),
\end{align*}
where $\rho\le\diam(\Gamma)$, $b$ is a block in $\Delta(z,\rho)$, and $J$ is the first positive integer such that $\diam(\Gamma)\le \frac{2^J\rho}{3}< 2\diam(\Gamma)$. Because of the choice of $J$, $\frac{\int_{\Gamma} b\,d\sigma }{\int_\Gamma \varphi_{z,2^{J}\varrho}\,d\sigma} \varphi_{z,2^{J}\varrho}(x)=\frac{\int_{\Gamma} b\,d\sigma }{\int_\Gamma \varphi_{z,2^{J}\varrho}\,d\sigma}$ is constant. Therefore, in the case of $\Gamma$ bounded we conclude that $\widetilde A^{s,p}(\Gamma)=\dot A^{s,p}(\Gamma)$ with equivalent norms modulo adding constants.

\end{rmk}

The Besov spaces used in Theorems~\ref{thm:Dirichlet} and~\ref{thm:Dirichlet:Lq}    are defined as follows.
\begin{defn}\label{dfn:Besov}Let $\Gamma\subset\R^\dmn$ be a $\bdmn$-Ahlfors regular set, let $0<s<1$, and let $\bdmn/(\bdmn+s)<p\leq \infty$.
We then define
\begin{equation*}
\dot B^{s,p}_p(\Gamma)=\begin{cases} \dot A^{s,p}(\Gamma), & p<1,\\ \dot \Lambda^{s,p}(\Gamma), & p\geq 1.\end{cases}
\end{equation*}
\end{defn}

\begin{rmk}
The theory of  Besov spaces on Euclidean space $\dot B^{s,p}_q(\R^\dmn)$ defines and considers these spaces for all $s\in\R$ and all $p$, $q>0$. However, in the theory of Besov spaces on spaces of homogeneous type $\dot B^{s,p}_q(\Gamma)$, the theory is generally only formulated in the case where the smoothness parameter $s$ lies in a range depending on the structure of the space~$\Gamma$ (but always in $[-1,1]$), and where $p$, $q$ are also bounded from below. See, as recent general formulations, \cite{Han98, HanMY08}.

The paper \cite{FraJ85} provides an atomic decomposition of $\dot B^{s,p}_q(\R^\dmn)$ for any~$s\in\R$. The atoms used must satisfy conditions depending on the values of $s$ (and $p$ and~$q$); there are no functions that may be used as atoms for all possible~$s$. It is thus common in the Euclidean theory to speak of $\dot {B} ^ {s,p}_q$-atoms rather than simply atoms. 

However, if $s$ is constrained to lie in $(-1,1)$ and $p$, $q$ satisfy appropriate lower bounds, then a single set of atoms may be used for all such $\dot B^{s,p}_q(\Gamma)$; see \cite{HanLY99, HanY03}. Thus, in the theory of Besov spaces on spaces of homogeneous type, ``atoms'' denotes what in the Euclidean theory would be referred to as $\dot B^{s,p}_q$-atoms for all $s\in (-1,1)$ and $p$, $q>\bdmn/(\bdmn+1+s)$. The word ``block'' denotes what in the Euclidean theory would be referred to as $\dot B^{s,p}_q$-atoms for all $s\in (0,1)$ and $p$, $q>\bdmn/(\bdmn+1+s)$ (but not necessarily for $s\leq 0$). We have chosen to follow \cite{HanLY99, HanY03} and refer to the functions in Definition~\ref {dfn:atom} as blocks rather than atoms.

(This is also the reason that blocks are normalized such that their $L^2$ norm, rather than their $\dot \Lambda^{s,p}$ norm, is bounded independently of~$\varrho$.)
\end{rmk}

\subsection{Whitney decompositions of open sets}\label{sec:dfn:Whitney}

We define Whitney decompositions of open sets as follows.

\begin{defn}\label{dfn:Whitney}
A dyadic cube in $\R^\dmn$ is a set of the form \begin{equation*}Q=[j_12^{k},(j_1+1)2^k)\times\dots\times[j_\dmn2^k,(j_\dmn+1)2^k)\end{equation*} for some integers $k$, $j_1,\dots,j_\dmn$. The side length of such a cube is~$\ell(Q)=2^k$. Observe that the set of all cubes of side length $2^k$ forms a partition of~$\R^\dmn$.

Suppose that $Q$ is a dyadic cube in $\R^\dmn$. 
We let $P(Q)$ denote its dyadic parent, that is, the unique dyadic cube that satisfies both $Q\subset P(Q)$ and $\ell(P(Q))=2\ell(Q)$. 

If $\Gamma\subset\R^\dmn$ is a nonempty proper closed subset of~$\R^\dmn$, then the Whitney decomposition $\mathcal{G}=\mathcal{G}(\Gamma)$ of $\R^\dmn\setminus\Gamma$ is given by
\begin{equation*}\mathcal{G}(\Gamma)= \{Q\text{ dyadic}: \dist(Q,\Gamma)>\diam Q,\> \dist(P(Q),\Gamma) \leq  \diam P(Q)\}.\end{equation*}

\end{defn}

We will frequently need the closest point to a cube; thus, if $Q\in \mathcal{G}$, we let $\xi_Q$ satisfy
\begin{equation}\label{eqn:cube:point}
\xi_Q\in\Gamma,\qquad \dist(\xi_Q,Q)=\dist(\Gamma,Q).
\end{equation}
Because $\Gamma$ is closed, $\xi_Q$ must exist.
For any Whitney cube, we fix such a point, and we define the projection of the cube on the boundary as
\begin{equation}\label{dfn:Delta:sub:Q}
\Delta_Q:=\Delta(\xi_Q,\diam Q)=B(\xi_Q,\diam Q)\cap\Gamma.
\end{equation}

\subsection{Basic properties of Whitney decompositions}
Whitney decompositions are a standard tool in harmonic analysis and have many well known properties; for convenience, we will recall these properties as established in \cite{BarMP26pA}.

Observe that if $x\in\R^\dmn\setminus\Gamma$ then $x\in Q$ for exactly one $Q\in\mathcal{G}(\Gamma)$.
If $Q\in \mathcal{G}$, then 
\begin{equation*}\dist(Q,\Gamma)-\diam Q \leq\dist(P(Q),\Gamma)\leq \diam(P(Q))=2\diam Q ,\end{equation*}
and so 
\begin{equation}\label{eqn:Whitney}
\dist(Q,\Gamma)\leq 3\diam Q . 
\end{equation}
This, together with Definition~\ref{dfn:Whitney} of Whitney cubes, implies
\begin{equation*}
\dist(Q,\Gamma)\approx\diam Q.
\end{equation*}
In particular,
\begin{equation}
\label{eqn:R:B:size}
\text{if }Q\in\mathcal{G},\text{ } x\in\Gamma,\text{ and }Q\cap B(x,r)\neq\emptyset\text{, then }\ell(Q)\leq Cr
.\end{equation}
Indeed, $\ell(Q)\approx\diam Q\approx \dist(Q,\Gamma)\leq r$.

We now recall the following lemmas on useful properties of Whitney cubes (see \cite{BarMP26pA} for proofs). If $s>0$ and $Q$ is a cube, we denote by $sQ$ the cube having the same center as $Q$ and with side length $\ell(sQ)=s\ell(Q)$.
\begin{lem}\label{lem:dilate:Whitney}
Let $\Gamma\subset\R^\dmn$ be a nonempty proper closed set, and let $\mathcal{G}=\mathcal{G}(\Gamma)$  be as in Definition~\ref{dfn:Whitney}.

If $Q\in\mathcal{G}$, then
\begin{align}
\label{eqn:Whitney:dilate:distance:min}
\dist(2Q,\Gamma)&\geq \frac{1}{2}\diam Q,
\\ 
\label{eqn:Whitney:dilate:distance:max}
\sup_{x\in 2Q} |x-\xi_Q|&\leq\frac{9}{2} \diam Q.\end{align}
If $Q$, $R\in\mathcal{G}$, then
\begin{equation}
\label{eqn:Whitney:dilate:distance}
\frac{1}{2}\diam Q
-\frac{9}{2}\diam R
\leq\dist(2Q,2R)\end{equation}
and so if $2Q\cap 2R\neq \emptyset$, then
\begin{equation*} \dist(Q,\Gamma)\approx  \diam Q\approx \diam R \approx \dist(R,\Gamma)\end{equation*}
with the implicit constants depending only on~$\dmn$.
In particular, there are at most $C_{\dmn}$ such cubes~$R$.

\end{lem}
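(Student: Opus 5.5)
The statement to prove is Lemma~\ref{lem:dilate:Whitney}, and all three bounds come from elementary geometry of dyadic cubes together with the two defining inequalities of $\mathcal{G}$: $\dist(Q,\Gamma)>\diam Q$, and \eqref{eqn:Whitney}, $\dist(Q,\Gamma)\leq 3\diam Q$.

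For \eqref{eqn:Whitney:dilate:distance:min}, I will use that every point of $2Q$ lies within distance $\tfrac12\diam Q$ of $Q$. Indeed, the coordinatewise offset of such a point from $Q$ is at most $\ell(Q)/2$, so its distance to $Q$ is at most $\sqrt{\dmn}\,\ell(Q)/2=\tfrac12\diam Q$. Hence
\[
\dist(2Q,\Gamma)\geq \dist(Q,\Gamma)-\tfrac12\diam Q> \tfrac12\diam Q.
\]
For \eqref{eqn:Whitney:dilate:distance:max}, I will take $x\in 2Q$ and write
\[
|x-\xi_Q|\leq \dist(\xi_Q,Q)+\diam Q+\tfrac12\diam Q .
\]
This holds because $x$ is within $\tfrac12\diam Q$ of some point of $Q$, and any two points of $\overline Q$ are within $\diam Q$ of each other. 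Since $\dist(\xi_Q,Q)=\dist(\Gamma,Q)\leq 3\diam Q$ by \eqref{eqn:cube:point} and \eqref{eqn:Whitney}, the total is at most $\tfrac92\diam Q$.

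For \eqref{eqn:Whitney:dilate:distance}, I will take $x\in 2Q$ and $y\in 2R$. Then
\[
|x-y|\geq |x-\xi_R|-|y-\xi_R|\geq \dist(2Q,\Gamma)-\tfrac92\diam R\geq \tfrac12\diam Q-\tfrac92\diam R,
\]
using \eqref{eqn:Whitney:dilate:distance:min} for $Q$ and \eqref{eqn:Whitney:dilate:distance:max} for $R$. Taking the infimum over $x$ and $y$ gives the claim.

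If $2Q\cap 2R\neq\emptyset$, the left side of \eqref{eqn:Whitney:dilate:distance} is zero, so $\diam Q\leq 9\diam R$. By symmetry $\diam R\leq 9\diam Q$, and combining these with $\diam Q<\dist(Q,\Gamma)\leq 3\diam Q$ (and the same for $R$) gives the chain of comparabilities.

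For the count, both cubes are dyadic, so $\ell(R)\in\{2^k\ell(Q): |k|\leq \log_2 9\}$, which leaves finitely many admissible sizes. For each size, the cubes $R$ with $2R\cap 2Q\neq\emptyset$ lie in a fixed dilate of $Q$ of size comparable to $\ell(Q)$. Cubes of one size are disjoint, so a volume comparison bounds their number by $C_\dmn$.

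None of these steps is a genuine obstacle. The only places needing care are the constant $\tfrac12\diam Q$ for the distance from $2Q$ to $Q$, and remembering that the upper bound \eqref{eqn:Whitney} comes from the parent condition in the definition of $\mathcal{G}$.
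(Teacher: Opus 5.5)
Your argument is correct and is the standard elementary one: $\dist(Q,\Gamma)>\diam Q$, the bound \eqref{eqn:Whitney}, and the fact that $2Q$ lies within $\frac12\diam Q$ of $Q$ together give exactly the constants $\frac12$ and $\frac92$ in the statement, and the comparability and the counting then follow as you describe. The paper does not reproduce this proof here and instead defers to the companion paper \cite{BarMP26pA}. However, the way it later uses the lemma, for instance $\diam Q\le 9\diam R$ whenever $2Q\cap 2R\neq\emptyset$, matches your derivation.
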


\begin{lem}\label{lem:Whitney:boundary}
Let $\Gamma\subsetneq\R^\dmn$ be a closed set. 
For each $R\in \mathcal{G}(\Gamma)$, let $\xi_R\in\Gamma$ be as in~\eqref{eqn:cube:point}, and let $\Delta_R$ be as in~\eqref{dfn:Delta:sub:Q}.

Then there is a $C$ depending only on  $\dmn$ such that, if $x\in\Gamma$ and $j\in \Z$, then
\begin{equation*}\{R\in \mathcal{G}(\Gamma):\ell(R)=2^j,\>x\in \Delta_R\}\end{equation*}
contains at most $C$ elements. 
\end{lem}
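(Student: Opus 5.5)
The plan is to prove the bound by a volume-counting argument. Fix $x\in\Gamma$ and $j\in\Z$, and let $\mathcal{R}$ denote the set in question. For each $R\in\mathcal{R}$ we will show that $R$ lies in a fixed ball around $x$ of radius comparable to $2^j$. Since the cubes in $\mathcal{R}$ are distinct dyadic cubes of the same side length, they are pairwise disjoint, and each has volume $2^{j\dmn}$. The number of them is therefore at most the volume of that ball divided by $2^{j\dmn}$, which is a constant depending only on~$\dmn$.

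The key step is the localization. Take $R\in\mathcal{R}$. Then $x\in\Delta_R=B(\xi_R,\diam R)\cap\Gamma$, so $|x-\xi_R|<\diam R=\sqrt{\dmn}\,2^j$. By \eqref{eqn:Whitney:dilate:distance:max}, every point $y\in R\subset 2R$ satisfies $|y-\xi_R|\leq \frac92\diam R$. The triangle inequality then gives
\[
R\subset B\bigl(x,\tfrac{11}{2}\sqrt{\dmn}\,2^j\bigr).
\]
If we prefer not to quote \eqref{eqn:Whitney:dilate:distance:max}, we can instead use \eqref{eqn:Whitney} directly. The distance from $\xi_R$ to $R$ equals $\dist(R,\Gamma)\leq 3\diam R$, so every point of $R$ is within $4\diam R$ of $\xi_R$, and hence within $5\diam R$ of $x$.

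To conclude, the cubes in $\mathcal{R}$ are disjoint, since dyadic cubes of equal side length partition $\R^\dmn$. Each has Lebesgue measure $2^{j\dmn}$, and all lie in a ball of radius $C_\dmn 2^j$. Therefore
\[
\#\mathcal{R}\cdot 2^{j\dmn}\leq |B(0,1)|\,\bigl(\tfrac{11}{2}\sqrt{\dmn}\bigr)^{\dmn}2^{j\dmn},
\]
which gives the claimed bound with $C$ depending only on~$\dmn$.

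There is no real obstacle here. The one point to watch is that the ball $B(x,\cdot)$ is centered at the given $x$ and not at any $\xi_R$. This is handled by the triangle inequality through $\xi_R$, using that $\xi_R$ is within distance $\diam R$ of $x$. No Ahlfors regularity is needed, and this is consistent with the lemma being stated for an arbitrary closed~$\Gamma$.
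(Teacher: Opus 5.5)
Your proof is correct and takes essentially the paper's approach. You localize $R\subset B(x,\tfrac{11}{2}\sqrt{\dmn}\,2^j)$ through $\xi_R$, then count disjoint equal-size dyadic cubes by volume; the paper defers this lemma to its companion paper, but it uses the same count of disjoint Whitney cubes in a ball of comparable radius in the proof of Lemma~\ref{lem:E:basic:bound}, and, as you note, no Ahlfors regularity is needed.
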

Recall that $\delta(y)=\dist(y,\Gamma)$.
By the bound~\eqref{eqn:Whitney:dilate:distance:max}, 
\begin{equation}\label{eqn:wdist}
\text{if }y\in 2Q\text{ then } \delta(y)\leq |y-\xi_Q|\le \frac{9}{2}\diam Q .\end{equation} 
In particular, 
if $y\in 2Q$ for some  $Q$ with $\ell(Q)\leq 2^j$ then $\delta(y)\leq 9\sqrt{\dmn} \cdot 2^{j-1}$. 
\subsection{Whitney decomposition and weighted averaged Sobolev spaces}
\begin{defn}\label{dfn:weighted:averaged}
Let $\Omega$ be an open set.
Recall that $\delta(y)=\dist(y,\partial\Omega)$.

Let $0<c<1$ and $1\leq \beta\leq\infty$. Let $0<p\leq\infty$ and let $s\in\R$.
The weighted averaged Lebesgue space ${L^{p}_{av,c,\beta,s}(\Omega)}$ is the set of all $f\in L^\beta_{loc}(\Omega)$ such that
\begin{equation*}\|f\|_{L^{p}_{av,c,\beta,s}(\Omega)}
= \biggl(\int_{\Omega}
\biggl(\fint_{B(y,c\delta(y))} |u|^\beta \biggr)^{p/\beta}
 \, \delta(y)^{\bdmn-\pdmn+p-ps}\,dy\biggr)^{1/p}\end{equation*}
is finite, with the usual modifications if $\beta=\infty$ or $p=\infty$.

The weighted averaged Sobolev space $\dot W^{1,p}_{av,c,\beta,s}(\Omega)$ is the set of all $f\in \dot W^{1,\beta}_{loc}(\Omega)$ such that
\begin{equation*}\|f\|_{\dot W^{1,p}_{av,c,\beta,s}(\Omega)}
= \|\nabla f\|_{L^p_{av,c,\beta,s}(\Omega)}
=
\biggl(\int_{\Omega}
\biggl(\fint_{B(y,c\delta(y))} |\nabla f|^\beta \biggr)^{p/\beta}\,
\delta(y)^{\bdmn-\pdmn+p-ps}\,dy\biggr)^{1/p}\end{equation*}
is finite.
\end{defn}

Whitney decomposition allows us to give an equivalent characterization of the weighted averaged $L^p$ norms. The next lemma was proven in \cite{BarMP26pA}.
\begin{lem}\label{lem:averaged:Whitney}Let $\Omega\subset\R^\dmn$ be a nonempty proper open subset of~$\R^\dmn$. Let $0<c<1$ and let $\mathcal{G}$ be as in Definition~\ref{dfn:Whitney}.

Let $
1\leq\beta\leq\infty$, $0< 
p\leq\infty$ and $s\in\R$. Let $F\in L^\beta_{loc}(\Omega)$. Then
\begin{equation}
\label{eqn:averaged:Whitney}
\|F\|_{L^p_{av,c,\beta,s}(\Omega)}
\approx
\biggl(\sum_{Q\in\mathcal{G}} \biggl(\fint_{2Q} |F|^\beta\biggr)^{p/\beta} \ell(Q)^{\bdmn+p-ps}\biggr)^{1/p}
\end{equation}
where the implicit constants depend only on $p$, $s$, $\dmn$, $\bdmn$, and~$c$.
\end{lem}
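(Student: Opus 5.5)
The plan is to pass from the continuous integral over $y\in\Omega$ to a sum over Whitney cubes by comparing, for each $y$, the ball $B(y,c\delta(y))$ with a bounded number of doubled Whitney cubes, and conversely. Throughout, write $F$ for the function in the averages.

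First I record the geometric facts. Fix $y\in Q\in\mathcal{G}$. By \eqref{eqn:Whitney} and the definition of $\mathcal{G}$, $\delta(y)\approx\diam Q\approx\ell(Q)$. Hence $B(y,c\delta(y))$ has radius comparable to $\ell(Q)$. Every point of this ball lies at distance $\geq(1-c)\delta(y)\gtrsim\ell(Q)$ from $\Gamma$. By Lemma~\ref{lem:dilate:Whitney}, the ball therefore meets only cubes $R$ with $\ell(R)\approx\ell(Q)$, and there are at most $C$ of them. Since $|B(y,c\delta(y))|\approx|2R|$ for each such $R$,
\begin{equation*}
\fint_{B(y,c\delta(y))}|F|^\beta\leq C\sum_{R:\,R\cap B(y,c\delta(y))\neq\emptyset}\fint_{2R}|F|^\beta .
\end{equation*}

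Next I integrate over $y\in Q$, using $\delta(y)^{\bdmn-\pdmn+p-ps}\approx\ell(Q)^{\bdmn-\pdmn+p-ps}$ and $|Q|=\ell(Q)^\dmn$. This gives
\begin{equation*}
\int_Q\Bigl(\fint_{B(y,c\delta(y))}|F|^\beta\Bigr)^{p/\beta}\delta^{\bdmn-\pdmn+p-ps}\,dy\lesssim \sum_{R}\Bigl(\fint_{2R}|F|^\beta\Bigr)^{p/\beta}\ell(R)^{\bdmn+p-ps},
\end{equation*}
where the bounded number of terms lets me move the power $p/\beta$ inside the sum for any $0<p\le\infty$. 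I then sum over $Q$. Each $R$ arises from at most $C$ cubes $Q$, again by Lemma~\ref{lem:dilate:Whitney}, and this yields the upper bound ``$\lesssim$''.

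For the reverse inequality, I fix $R$ and cover $2R$ by balls. Every $y\in 2R$ has $\delta(y)\approx\ell(R)$ by \eqref{eqn:Whitney:dilate:distance:min} and \eqref{eqn:wdist}. Choose a fixed small $\eta=\eta(c,\dmn)$ and a set $Y\subset 2R$ of at most $C(\eta,\dmn)$ points with $2R\subset\bigcup_{x\in Y}B(x,\eta\ell(R))$. For each $x\in Y$ and each $y\in B(x,\eta\ell(R))$, if $\eta$ is small enough then $B(x,\eta\ell(R))\subset B(y,c\delta(y))$, and $|B(y,c\delta(y))|\approx\ell(R)^\dmn$. This gives
\begin{equation*}
\fint_{2R}|F|^\beta\lesssim\sum_{x\in Y}\fint_{B(x,\eta\ell(R))}|F|^\beta\lesssim \sum_{x\in Y}\fint_{B(y,c\delta(y))}|F|^\beta
\end{equation*}
for every choice of $y$. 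To avoid selecting points, I average the resulting $p/\beta$ power over $y\in B(x,\eta\ell(R))\cap\Omega$, a set of measure $\approx\ell(R)^\dmn$, and multiply by $\ell(R)^{\bdmn+p-ps}\approx\ell(R)^\dmn\delta(y)^{\bdmn-\pdmn+p-ps}$. Then $(\fint_{2R}|F|^\beta)^{p/\beta}\ell(R)^{\bdmn+p-ps}$ is bounded by the weighted integral of $(\fint_{B(y,c\delta(y))}|F|^\beta)^{p/\beta}$ over the union $U_R$ of those small balls. The sets $U_R$ lie within the neighbours of $R$, so they overlap boundedly, and summing over $R$ gives ``$\gtrsim$''. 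For $p=\infty$ or $\beta=\infty$, the same comparisons hold with suprema in place of sums.

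The main obstacle is only bookkeeping: making the bounded-overlap constants and the choice of $\eta$ depend on $c$ and $\dmn$ alone. This uses the fact that $c<1$ keeps the balls $B(y,c\delta(y))$ away from $\Gamma$ at scale $\delta(y)$. Note also that $\Omega$ here plays the role of $\R^\dmn\setminus\Gamma$. If $\Omega$ is a proper component, the decomposition $\mathcal{G}$ should be restricted to the cubes meeting $\Omega$. Such a cube lies entirely in $\Omega$, since it avoids $\Gamma$ and is connected, so the argument above carries over unchanged.
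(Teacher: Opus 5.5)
Your argument is correct. It is the standard Whitney-cube comparison: for the upper bound, you cover $B(y,c\delta(y))$ by the boundedly many comparable-size cubes it meets; for the lower bound, you cover $2R$ by balls $B(x,\eta\ell(R))$ with $\eta\lesssim c$ that lie inside $B(y,c\delta(y))$ for every nearby $y$, and then use bounded overlap. The paper gives no proof of this lemma; it imports it from the companion paper \cite{BarMP26pA}, and your route is the natural one.

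One small correction of attribution. The size comparability and the finiteness of the cubes meeting $B(y,c\delta(y))$ do not follow directly from Lemma~\ref{lem:dilate:Whitney}, which only treats $2Q\cap 2R\neq\emptyset$. They follow from $\delta\approx\ell(R)$ on each $R\in\mathcal{G}$ together with $(1-c)\delta(y)\le\delta\le(1+c)\delta(y)$ on the ball.
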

\begin{rmk}
Notice that, by the bound~\eqref{eqn:averaged:Whitney}, if $0<c<\Tilde{c}<1$ then 
\begin{equation}\label{eqn:equiv:c}
\|F\|_{L^p_{av,c,\beta,s}(\Omega)}\approx\|F\|_{L^p_{av,\Tilde{c},\beta,s}(\Omega)}.
\end{equation}
In light of this, we chose the value of $c=1/2$ to state our main theorems~\ref{thm:Dirichlet} and~\ref{thm:Dirichlet:Lq}. For this special value of $c$, we change notation:
\begin{equation*}
L^p_{av,\beta,s}(\Omega):=L^p_{av,1/2,\beta,s}(\Omega).
\end{equation*}
\end{rmk}
As an immediate corollary, we obtain that $L^p$ spaces are a special case of $ L^p$-averaged spaces.
\begin{cor}\label{cor:averaged:is:Wp}Let $\Omega\subset\R^\dmn$ be a nonempty open proper subset. Let $0<c<1$.
Let $0<p\leq\infty$ and $\theta\in\R$. Let $F\in L^p_{loc}(\Omega)$. Then
\begin{equation*}\int_\Omega \biggl(\fint_{B(x,c\delta(x))} |F|^p\biggr)\,\delta(x)^\theta\,dx
\approx
\int_\Omega |F|^p \delta^\theta
\end{equation*}
where the implicit constants depend only on $p$, $\theta$, $\dmn$, and~$c$.
\end{cor}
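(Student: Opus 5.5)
The statement to prove is Corollary~\ref{cor:averaged:is:Wp}: for $F\in L^p_{loc}(\Omega)$,
\[\int_\Omega \Bigl(\fint_{B(x,c\delta(x))}|F|^p\Bigr)\delta(x)^\theta\,dx\approx\int_\Omega|F|^p\delta^\theta.\]
The plan is a direct Fubini argument. I would not route it through Lemma~\ref{lem:averaged:Whitney}, since here $\beta=p$ may be less than $1$ and $\theta$ is arbitrary.

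First, the geometry of the balls. If $y\in B(x,c\delta(x))$, the triangle inequality gives
\[(1-c)\delta(x)\le\delta(y)\le(1+c)\delta(x).\]
Hence $\delta(x)\approx\delta(y)$, with constants depending only on~$c$. This also gives $\delta(x)\ge\delta(y)/(1+c)$, so $|x-y|<c\delta(x)$ forces $|x-y|<c\delta(y)/(1-c)$. Conversely, if $|x-y|<\frac{c}{1+c}\delta(y)$, then $\delta(x)\ge\delta(y)/(1+c)$, and so $|x-y|<c\delta(x)$.

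Next, rewrite the left-hand side with Tonelli. It equals
\[\int_\Omega |F(y)|^p\,K(y)\,dy,\qquad K(y)=\int_{\{x:\,|x-y|<c\delta(x)\}}\frac{\delta(x)^\theta}{|B(x,c\delta(x))|}\,dx.\]
On the region of integration we have $\delta(x)\approx\delta(y)$. So the integrand is comparable to $\delta(y)^{\theta-\dmn}$, with constants depending on $c$, $\theta$ and~$\dmn$.

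By the containments above, the region of integration contains $B\bigl(y,\frac{c}{1+c}\delta(y)\bigr)$ and is contained in $B\bigl(y,\frac{c}{1-c}\delta(y)\bigr)$. Its measure is therefore comparable to $\delta(y)^\dmn$, which gives $K(y)\approx\delta(y)^\theta$. Substituting this back yields the claim.

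The only point needing care is checking that the region of integration in $K(y)$ lies in $\Omega$. This holds because $B(x,c\delta(x))\subset\Omega$ whenever $x\in\Omega$ and $c<1$, and the outer ball $B\bigl(y,\frac{c}{1-c}\delta(y)\bigr)$ is never needed to lie inside $\Omega$: the region itself already consists of points $x\in\Omega$. The constants depend on $\theta$ because $\delta(x)^\theta$ is compared with $\delta(y)^\theta$; there is no genuine obstacle.
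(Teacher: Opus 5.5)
Your argument is correct, but it takes a different route from the paper's. The paper gives no separate proof. It records the statement as an immediate consequence of the Whitney characterization in Lemma~\ref{lem:averaged:Whitney}:
choose $s$ so that the weight $\delta^{\bdmn-\dmn+p-ps}$ becomes $\delta^\theta$, and take $\beta=p$;
use $\delta\approx\ell(Q)$ on $2Q$, so that $\bigl(\fint_{2Q}|F|^p\bigr)\ell(Q)^{\bdmn+p-ps}$ is comparable to $\int_{2Q}|F|^p\delta^\theta$;
sum over $Q$, using that the cubes $2Q$ cover $\Omega$ with bounded overlap.

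Your Tonelli computation of the kernel $K(y)\approx\delta(y)^\theta$ avoids the Whitney machinery entirely. It needs only $\delta(x)\approx\delta(y)$ for $y\in B(x,c\delta(x))$ and the two ball containments. It also makes clear that $p$ plays no role: the statement is really about an arbitrary nonnegative function $G=|F|^p$, with constants depending only on $c$, $\theta$, $\dmn$. The paper's route is shorter only because the Whitney lemma is already in hand.

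Your stated reason for avoiding that lemma is not a genuine obstruction, though. Apply it to $G=|F|^p\in L^1_{loc}(\Omega)$ with inner and outer exponents both equal to $1$ and with $s=\bdmn-\dmn+1-\theta$. This covers $p<1$ and arbitrary $\theta$ at once.

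Two small remarks. First, the inequality that gives $|x-y|<c\delta(y)/(1-c)$ is $\delta(x)<\delta(y)/(1-c)$, which comes from $\delta(y)>(1-c)\delta(x)$, not from $\delta(x)\ge\delta(y)/(1+c)$; the containment itself is right. Second, the displayed identity is only meaningful for $p<\infty$, which is exactly the range your argument covers.
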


\section{Extensions}\label{sec:extension}

We now define our extension operator. This definition is based on that in \cite{DavFM21}.

\begin{defn}\label{dfn:extension}
Let $\Gamma\subset\R^\dmn$ be a closed nonempty  $\bdmn$-Ahlfors regular set for some $0<\bdmn<\dmn$. Let $g\in L^1_{loc}(\Gamma)$.

We define the extension operator $Eg$ as follows.
Let $\mathcal{G}$ be as in Definition~\ref{dfn:Whitney}. Let $\{\varphi_Q\}_{Q\in\mathcal{G}}$ be a smooth partition of unity such that $\varphi_Q$ is supported in $2Q$, $0\leq \varphi_Q\leq 1$, and $|\nabla\varphi_Q|\leq C(\diam Q)^{-1}$.

Now, for $y\in\R^\dmn\setminus\Gamma$ and for $g\in L^1_{loc}(\Gamma)$, let
\begin{equation}\label{eqn:extension}
Eg(y)=\sum_{Q\in\mathcal{G}} \varphi_Q(y) \fint_{\Delta_Q} g\,d\sigma,\end{equation}
where $\Delta_Q$ is as in~\eqref{dfn:Delta:sub:Q}.
\end{defn}
We now investigate the properties of this extension operator. 
As noted in Lemma~\ref{lem:dilate:Whitney}, if $R\in \mathcal{G}$ then there are at most $C_\dmn$ cubes $Q\in\mathcal{G}$  such that $2Q\cap 2R\neq \emptyset$. Thus $Eg\big\vert_{2R}$ is a linear combination of at most $C_\dmn$ functions $\varphi_Q$. Because the functions $\varphi_Q$ are smooth, their locally finite sums are smooth as well, and so
\begin{equation}
\label{eqn:E:smooth}\text{if }g\in L^1_{loc}(\Gamma)\text{ then }Eg\in C^\infty_{loc}(\R^\dmn\setminus\Gamma).\end{equation}

Furthermore, suppose $\Gamma$ is bounded. (By the Ahlfors condition, this is true if and only if $\sigma(\Gamma)<\infty$.) 
We claim that 
\begin{equation}\label{eqn:extension:far:constant}
Eg(y)=\fint_\Gamma g\,d\sigma\quad\text{for all $y\in\R^\dmn$ with $\dist(y,\Gamma)>\frac{9}{2} \diam \Gamma $.}\end{equation}
To see this, suppose that $y\in\R^\dmn$ with $\delta(y)>\frac{9}{2}\diam \Gamma $. If $Q\in\mathcal{G}$ and $y\in 2Q$, then, by~\eqref{eqn:wdist}, $\diam Q \geq \frac{2}{9}\delta(y)>\diam \Gamma$. Thus $\Delta_Q=\Gamma$ for all $Q\in\mathcal{G}$ with $2Q\owns y$. Because $\{\varphi_Q\}_{Q\in\mathcal{G}}$ is a partition of unity, we have that formula~\eqref{eqn:extension:far:constant} is valid.

\subsection{Extension of continuous functions}

In this subsection, we will show that if $g$ is continuous, then so is $Eg$. We will begin with the following localization lemma; that is, we will show that $Eg(y)$ depends only on the values of $g$ in the part of $\Gamma$ near~$y$.

\begin{lem}\label{lem:Eg:local}Suppose that $y\in \R^\dmn\setminus\Gamma$, that $x\in\Gamma$, and that $g\in L^1_{loc}(\Gamma)$. Then
\begin{equation*}Eg(y)=\sum_{Q\in\mathcal{G}} \varphi_Q(y) \frac{1}{\sigma(\Delta_Q)}\int_{\Delta_Q\cap \Delta(x,12|x-y|)} g\,d\sigma.\end{equation*}
In particular, if $x\in\Gamma$ satisfies $|x-y|=\dist(y,\Gamma)=\delta(y)$ then
\begin{equation*}Eg(y)=\sum_{Q\in\mathcal{G}} \varphi_Q(y) \frac{1}{\sigma(\Delta_Q)} \int_{\Delta_Q\cap \Delta(x,12\delta(y))} g\,d\sigma.\end{equation*}
\end{lem}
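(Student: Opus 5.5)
The plan is to show that for every $Q\in\mathcal{G}$ with $\varphi_Q(y)\neq0$ we have the containment $\Delta_Q\subset\Delta(x,12|x-y|)$. Then $\Delta_Q\cap\Delta(x,12|x-y|)=\Delta_Q$ for every term that contributes to the sum~\eqref{eqn:extension}. The intersection therefore changes nothing, and since $\fint_{\Delta_Q}g\,d\sigma=\frac{1}{\sigma(\Delta_Q)}\int_{\Delta_Q}g\,d\sigma$, the identity follows termwise. The second statement is simply the special case $|x-y|=\delta(y)$.

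Fix $Q$ with $\varphi_Q(y)\neq0$, so that $y\in 2Q$ because $\supp\varphi_Q\subset 2Q$. I would first bound $\diam Q$ by $\delta(y)$. By~\eqref{eqn:Whitney:dilate:distance:min},
\begin{equation*}
\delta(y)\geq\dist(2Q,\Gamma)\geq\tfrac12\diam Q,
\end{equation*}
so $\diam Q\leq 2\delta(y)\leq 2|x-y|$, where the last inequality holds because $x\in\Gamma$.

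Next I would locate $\xi_Q$ relative to $x$. By~\eqref{eqn:Whitney:dilate:distance:max} (or~\eqref{eqn:wdist}), $|y-\xi_Q|\leq\frac92\diam Q\leq 9|x-y|$. Now take any $z\in\Delta_Q$, so that $|z-\xi_Q|<\diam Q\leq 2|x-y|$. The triangle inequality gives
\begin{equation*}
|z-x|\leq |z-\xi_Q|+|\xi_Q-y|+|y-x| < 2|x-y|+9|x-y|+|x-y|=12|x-y|.
\end{equation*}
Hence $z\in\Delta(x,12|x-y|)$, which proves the containment.

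There is no real obstacle here; the only care needed is tracking the constants so that they sum to exactly $12$. This uses the sharp constant $\frac12$ in~\eqref{eqn:Whitney:dilate:distance:min} and $\frac92$ in~\eqref{eqn:Whitney:dilate:distance:max}, and the fact that $\delta(y)\leq|x-y|$ for every $x\in\Gamma$.
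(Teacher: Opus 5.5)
Your proof is correct and follows the paper's argument: for every $Q$ with $\varphi_Q(y)\neq 0$ you use $y\in 2Q$ together with $\dist(2Q,\Gamma)\geq\frac12\diam Q$ and $|y-\xi_Q|\leq\frac92\diam Q$ to get $\Delta_Q\subset\Delta(x,12|x-y|)$, so the intersection does not change any contributing term. Your explicit triangle-inequality step ($2+9+1=12$) simply writes out the containment that the paper states without detail.
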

\begin{proof}
Recall that
\begin{equation*}Eg(y)=\sum_{Q\in\mathcal{G}} \varphi_Q(y) 
\frac{1}{\sigma(\Delta_Q)}\int_{\Delta_Q} g\,d\sigma.\end{equation*}
If $\varphi_Q(y)\neq 0$, then $y\in 2Q$. This means that
\begin{equation*}|x-y|\geq \dist(y,\Gamma)\geq \dist(2Q,\Gamma)\geq \frac{1}{2}\diam Q\end{equation*}
by Lemma~\ref{lem:dilate:Whitney}. Furthermore, for such~$Q$, by the bound~\eqref{eqn:Whitney:dilate:distance:max}
\begin{equation*}|y-\xi_Q|\leq
 \frac{9}{2}\diam Q \leq 9|x-y|.\end{equation*}
Thus, if $y\in\R^\dmn\setminus\Gamma$, $x\in\Gamma$, and $\varphi_Q(y)\neq 0$, then $\Delta_Q\subset\Delta(x,12|x-y|)$. Therefore, the lemma is proven, as restricting from $\Delta_Q$ to $\Delta_Q\cap\Delta(x,12|x-y|)$ only affects cubes $Q$ for which $\varphi_Q(y)=0$.
\end{proof}

\begin{cor}\label{cor:extension:of:cts}Suppose that that $x\in \Gamma$ and that $g:\Gamma\to\R$ is continuous at~$x$. Then 
\begin{equation*}\lim_{\substack{y\to x\\y\in\R^\dmn\setminus\Gamma}} Eg(y)=g(x).\end{equation*}
\end{cor}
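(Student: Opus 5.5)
The plan is to write $Eg(y)-g(x)$ as a convex combination of local averages of $g-g(x)$ over surface balls that shrink as $y\to x$. Fix $x\in\Gamma$ and $y\in\R^\dmn\setminus\Gamma$. Since $\{\varphi_Q\}$ is a partition of unity on $\R^\dmn\setminus\Gamma$ with $\varphi_Q\ge 0$, we have $\sum_Q\varphi_Q(y)=1$. Hence
\begin{equation*}
Eg(y)-g(x)=\sum_{Q\in\mathcal{G}}\varphi_Q(y)\fint_{\Delta_Q}\bigl(g-g(x)\bigr)\,d\sigma .
\end{equation*}
Only cubes with $y\in 2Q$ contribute, and there are at most $C_\dmn$ of them by Lemma~\ref{lem:dilate:Whitney}. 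This finiteness is convenient but not strictly needed, since the weights sum to one anyway.

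Next, use the computation in the proof of Lemma~\ref{lem:Eg:local}. For each $Q$ with $\varphi_Q(y)\neq 0$, that proof shows $\Delta_Q\subset\Delta(x,12|x-y|)$. Therefore every point $z\in\Delta_Q$ satisfies $|z-x|<12|x-y|$. It follows that
\begin{equation*}
\biggl|\fint_{\Delta_Q}\bigl(g-g(x)\bigr)\,d\sigma\biggr|\le \sup_{z\in\Delta(x,12|x-y|)}|g(z)-g(x)|.
\end{equation*}
Here the averages are well defined because $\sigma(\Delta_Q)>0$: the center $\xi_Q$ lies in $\Gamma$, so this follows from Ahlfors regularity, using $\sigma(\Delta(\xi,r))\gtrsim\min(r,\diam\Gamma)^\bdmn$. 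If $g$ is only locally integrable and continuous at $x$, replace the supremum by an essential supremum, which is all the bound requires.

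Summing with weights $\varphi_Q(y)$ that add up to one gives
\begin{equation*}
|Eg(y)-g(x)|\le\operatorname*{ess\,sup}_{z\in\Delta(x,12|x-y|)}|g(z)-g(x)|.
\end{equation*}
Given $\varepsilon>0$, continuity of $g$ at $x$ provides $\eta>0$ with $|g(z)-g(x)|<\varepsilon$ whenever $z\in\Gamma$ and $|z-x|<\eta$. So $|Eg(y)-g(x)|\le\varepsilon$ once $|x-y|<\eta/12$, which proves the limit.

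No step here is a serious obstacle. The only points needing care are checking that the inclusion $\Delta_Q\subset\Delta(x,12|x-y|)$ holds for every $x\in\Gamma$, not just the nearest point (it does, since that proof uses only $|x-y|\ge\delta(y)$), and checking that $\sum_Q\varphi_Q\equiv1$ off $\Gamma$.
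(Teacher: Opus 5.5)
Your proposal is correct and follows essentially the same route as the paper: subtract the constant $g(x)$ using that $E$ reproduces constants, invoke Lemma~\ref{lem:Eg:local} to confine every contributing $\Delta_Q$ to $\Delta(x,12|x-y|)$, and conclude from continuity at $x$ once $|x-y|<\eta/12$. Your extra checks are correct details the paper leaves implicit, namely that $\sigma(\Delta_Q)>0$ and that the inclusion holds for arbitrary $x\in\Gamma$ rather than only the nearest point.
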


\begin{proof}
Choose $\varepsilon>0$. By definition of continuity there is a $\delta>0$ such that if $\xi\in\Gamma$ and $|x-\xi|<\delta$ then $|g(x)-g(\xi)|<\varepsilon$.

By definition of~$E$, if $C$ is a constant, then $EC(y)=C$ for all $y\in\R^\dmn\setminus\Gamma$. In particular $|Eg(y)-g(x)|=|E(g-g(x))(y)|$.

If $y\in B(x,\delta/12)\setminus\Gamma$, then by Lemma~\ref{lem:Eg:local}
\begin{align*}|Eg(y)-g(x)|
&=|E(g-g(x))(y)|
\leq \sum_{Q\in\mathcal{G}} \varphi_Q(y) \frac{1}{\Delta_Q}\int_{\Delta_Q\cap \Delta(x,12|x-y|)} |g-g(x)|\,d\sigma
\\&< \sum_{Q\in\mathcal{G}} \varphi_Q(y) \,\varepsilon=\varepsilon
.\end{align*}
This completes the proof.
\end{proof}

\subsection{Local integrability of the extension operator and trace results}

Our next goal is to establish local integrability and a trace result for the extensions $Eg$. We begin with the following technical lemma, which contains the main computation; we will then use it to establish some results for $Eg$ near~$\Gamma$. We emphasize that, in all inequalities $I\le C II$, we will use the same constant $C$ to denote a quantity that depends on the fixed parameters specified in the statements of the Lemmas and Corollaries where the inequalities appear. Recall that $A$ is the Ahlfors regularity constant introduced in Definition~\ref{dfn:Ahlfors}.
\begin{lem}\label{lem:E:basic:bound}
Let $\Gamma\subset\R^\dmn$ be a closed nonempty  $\bdmn$-Ahlfors regular set for some $0<\bdmn<\dmn$.
If $k\in\Z$, $x\in\Gamma$, and $r<5\diam \Gamma $, define
\begin{equation}\label{eqn:V:ball}
\mathcal{V}_{k,x,r}=\{R\in\mathcal{G}{:} R\cap B(x,r)\neq\emptyset,\>\ell(R)=2^k\}.\end{equation}
If $g\in L^1_{loc}(\Gamma)$ and $E g$ is as in Definition~\ref{dfn:extension}, then we have the estimate
\begin{align*}\sum_{{R\in\mathcal{V}_{k,x,r}}}
\sup_R |Eg|^p\ell(R)^\theta
&\leq C
2^{k\theta-k\bdmn}
\int_{\Delta(x,K r)} 
|g(z)|^p\,d\sigma(z)
\end{align*}
whenever $1\leq p<\infty$ and $\theta\in\R$. Here $C>0$ and $K>1$ are constants depending only on $p$, $\theta$, $\dmn$, $\bdmn$, and $A$.
\end{lem}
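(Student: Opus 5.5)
The plan is to bound $\sup_R |Eg|$ pointwise by an average of $|g|$ over a boundary ball comparable to $\Delta_R$, apply Jensen, and then sum over $R$ using the bounded overlap of Lemma~\ref{lem:Whitney:boundary}.

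First, fix $R\in\mathcal{V}_{k,x,r}$ and $y\in R$. Every $Q$ with $\varphi_Q(y)\neq0$ satisfies $y\in 2Q$, so $2Q\cap 2R\neq\emptyset$. By Lemma~\ref{lem:dilate:Whitney}, $\diam Q\approx\diam R$ and there are at most $C_\dmn$ such $Q$. Moreover, by~\eqref{eqn:Whitney:dilate:distance:max}, $|\xi_Q-\xi_R|\leq|\xi_Q-y|+|y-\xi_R|\leq C\diam R$. Hence $\Delta_Q\subset\Delta(\xi_R,M\ell(R))$ for some $M=M(\dmn)$.

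Next, Ahlfors regularity gives $\sigma(\Delta_Q)\approx \ell(R)^\bdmn\approx\sigma(\Delta(\xi_R,M\ell(R)))$. One caveat: this requires $\diam Q\lesssim\diam\Gamma$. If $\Gamma$ is bounded and $Q$ is large, then $\Delta_Q=\Gamma$, so we instead use the lower bound $\sigma(\Delta_Q)=\sigma(\Gamma)\approx(\diam\Gamma)^\bdmn$ and handle that regime separately; see below. Since $0\le\varphi_Q\le1$ and $\sum_Q\varphi_Q=1$, we get
\begin{equation*}
|Eg(y)|\leq \sum_Q\varphi_Q(y)\fint_{\Delta_Q}|g|\,d\sigma\leq C\fint_{\Delta(\xi_R,M\ell(R))}|g|\,d\sigma .
\end{equation*}
Jensen's inequality (using $p\geq1$) then yields
\begin{equation*}
\sup_R|Eg|^p\leq C\,2^{-k\bdmn}\int_{\Delta(\xi_R,M2^k)}|g|^p\,d\sigma .
\end{equation*}

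For the summation, note that $R\cap B(x,r)\ne\emptyset$ implies $\ell(R)\le Cr$ by~\eqref{eqn:R:B:size}. Then $|\xi_R-x|\le Cr$, so $\Delta(\xi_R,M2^k)\subset\Delta(x,Kr)$. Multiplying by $\ell(R)^\theta=2^{k\theta}$ and summing over $R\in\mathcal{V}_{k,x,r}$, it remains to show that $\sum_{R}\mathbf 1_{\Delta(\xi_R,M2^k)}(z)\le C$ for each $z\in\Gamma$. I expect this overlap bound to be the main (though mild) obstacle, and I would prove it by a volume packing argument rather than directly through Lemma~\ref{lem:Whitney:boundary}. If $z\in\Delta(\xi_R,M2^k)$, then $R\subset B(z,C2^k)$, since $R$ lies within distance $C\ell(R)$ of $\xi_R$. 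The cubes $R$ of side $2^k$ are disjoint, so there are at most $C_\dmn$ of them inside that ball. (Alternatively, one can enlarge $\Delta_R$ and invoke Lemma~\ref{lem:Whitney:boundary}.) This gives the claimed bound.

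Finally, the bounded case. If $\Gamma$ is bounded and $2^k\gtrsim\diam\Gamma$, the ball $\Delta(\xi_R,M2^k)$ is all of $\Gamma$, and the correct normalization is $\sigma(\Gamma)^{-1}$ rather than $2^{-k\bdmn}$. This is where $r<5\diam\Gamma$ is used. Since $2^k\le Cr\le C\diam\Gamma$, we have $\sigma(\Delta_Q)\gtrsim\min(\diam Q,\diam\Gamma)^\bdmn\approx 2^{k\bdmn}$, so the estimate $\fint_{\Delta_Q}|g|\le C2^{-k\bdmn}\int_{\Delta(x,Kr)}|g|$ still holds and the constants remain uniform.
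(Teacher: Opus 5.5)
Your proposal is correct and follows the paper's proof essentially step for step. You bound $\sup_R|Eg|$ by averages of $|g|$ over $\Delta_Q\subset\Delta(\xi_R,C2^k)$ with $\sigma(\Delta_Q)$ comparable to $2^{k\bdmn}$ (using $2^k\le Cr<5C\diam\Gamma$), apply H\"older/Jensen, enlarge to $\Delta(x,Kr)$, and control the overlap by the same packing argument with disjoint Whitney cubes of side $2^k$ that the paper uses. Your explicit treatment of bounded $\Gamma$ via $\sigma(\Delta(\xi,\rho))\ge \min(\rho,\diam\Gamma)^{\bdmn}/A$ spells out what the paper compresses into a single appeal to Ahlfors regularity.
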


\begin{proof}
By definition of $E$, and because $\varphi_Q=0$ in $R$ whenever $2Q\cap R=\emptyset$, we have that
\begin{align*}\sum_{{R\in\mathcal{V}_{k,x,r}}}
\sup_R |Eg|^p\ell(R)^\theta
&=
\sum_{{R\in\mathcal{V}_{k,x,r}}}
\sup_{y\in R} \biggl|\sum_{\substack{Q\in\mathcal{G}\\ 2Q\cap R\neq\emptyset}} \varphi_Q(y)\fint_{\Delta_Q} g\,d\sigma\biggr|^p
\ell(R)^\theta
.\end{align*}
Because $\sum_{\substack{Q\in\mathcal{G}}} \varphi_Q(y)=1$, we have that
\begin{align*}\sum_{{R\in\mathcal{V}_{k,x,r}}}
\sup_R |Eg|^p\ell(R)^\theta
&\leq
\sum_{{R\in\mathcal{V}_{k,x,r}}}
\sup_{\substack{Q\in\mathcal{G}\\ 2Q\cap R\neq\emptyset}}\biggl| \fint_{\Delta_Q} g\,d\sigma\biggr|^p
\ell(R)^\theta
.\end{align*}

If $Q\in\mathcal{G}$ and $2Q\cap R\neq\emptyset$, then by Lemma~\ref{lem:dilate:Whitney} we have that $\ell(Q)\approx\ell(R)=2^k$ and $\dist(Q,R)\leq C2^k$, and so $|\xi_Q-\xi_R|\leq C2^k$. Thus $\Delta_Q=\Delta(\xi_Q,\diam Q)\subset \Delta(\xi_R,C_12^k)$. 
Since $\ell(R)=2^k\le Cr<5C\diam \Gamma $, we may apply the Ahlfors regularity of the boundary to see that $\sigma(\Delta_Q)\approx \sigma(\Delta(\xi_R, C_12^k))$. Thus
\begin{align*}\sum_{{R\in\mathcal{V}_{k,x,r}}}
\sup_R |Eg|^p\ell(R)^\theta
&\leq C
\sum_{{R\in\mathcal{V}_{k,x,r}}}
\biggl(\fint_{\Delta(\xi_R,C_12^k)} |g|\,d\sigma\biggr)^p
\ell(R)^\theta
.\end{align*}
By H\"older's inequality
\begin{align*}\sum_{{R\in\mathcal{V}_{k,x,r}}}
\sup_R |Eg|^p\ell(R)^\theta
&\leq C
\sum_{{R\in\mathcal{V}_{k,x,r}}}
2^{k\theta-k\bdmn}
\int_{\Delta(\xi_R,C_1 2^k)} |g|^p\,d\sigma
.\end{align*}
We now claim that $\Delta(\xi_R,C_1 2^k)\subset \Delta(x,Kr)$ for some constant~$K$. Indeed, by~\eqref{eqn:R:B:size} we have $2^k=\ell(R)\le Cr$. 
Recalling that $\dist(\xi_R,R)\approx \ell(R)$ the claim follows.
Thus, changing the order of summation and integration, we have that
\begin{align*}\sum_{{R\in\mathcal{V}_{k,x,r}}}
\sup_R |Eg|^p\ell(R)^\theta
&\leq C
\sum_{{R\in\mathcal{V}_{k,x,r}}}
2^{k\theta-k\bdmn}
\int_{\Delta(x,Kr)}N(z)\, |g(z)|^p\,d\sigma(z)
\end{align*}
where $N(z)$ is the number of cubes $R\in\mathcal{V}_{k,x,r}$ such that $z\in \Delta(\xi_R,C_1 2^k)$.

If $z\in\Delta(\xi_R,C_1 2^k)$ then $\dist(z,R)\leq |z-\xi_R|+\dist(R,\Gamma)\leq C_1 2^k+C\ell(R)\le C\ell(R)$. Thus $R\subset B(z, C \ell(R))$ and by disjointness of Whitney cubes we can bound the number of such cubes~$R$. Thus
\begin{align*}\sum_{{R\in\mathcal{V}_{k,x,r}}}
\sup_R |E g|^p\ell(R)^\theta
&\leq C
2^{k\theta-k\bdmn}
\int_{\Delta(x,K r)} 
|g(z)|^p\,d\sigma(z)
\end{align*}
as desired. 
\end{proof}

We will need two corollaries to Lemma~\ref{lem:E:basic:bound}. The first states that $Eg$ is locally integrable in $\R^\dmn$ whenever $g$ is locally integrable on~$\Gamma$.

\begin{cor}\label{cor:E:L1}If $g\in L^1_{loc}(\Gamma)$, then $Eg\in L^1(B(x,r))$ for all $x\in\Gamma$ and all $r>0$. More precisely, we have that if $0<r<5\diam \Gamma $ then
\begin{equation*}\int_{B(x,r)}|Eg|
\leq 
Cr^{\pdmn-\pbdmn} \int_{\Delta(x, K r)} |g|\,d\sigma,\end{equation*}
where $C>0$ and $K>1$ are constants depending only on $\dmn$, $\bdmn$, and $A$.
\end{cor}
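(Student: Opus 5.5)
The plan is to cover $B(x,r)\setminus\Gamma$ by the Whitney cubes that meet it, group them by side length, and apply Lemma~\ref{lem:E:basic:bound} with $p=1$ and $\theta=\dmn$. Since $\Gamma$ has $\sigma$-measure zero in $\R^\dmn$ (it has Hausdorff dimension $\bdmn<\dmn$), it suffices to integrate over $B(x,r)\setminus\Gamma$. That set is contained in $\bigcup_{k\in\Z}\bigcup_{R\in\mathcal{V}_{k,x,r}} R$, with $\mathcal{V}_{k,x,r}$ as in~\eqref{eqn:V:ball}. Hence
\begin{equation*}
\int_{B(x,r)}|Eg|\leq \sum_{k\in\Z}\sum_{R\in\mathcal{V}_{k,x,r}} |R|\sup_R|Eg| = \sum_{k\in\Z}\sum_{R\in\mathcal{V}_{k,x,r}}\sup_R|Eg|\,\ell(R)^{\dmn}.
\end{equation*}

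By~\eqref{eqn:R:B:size}, $\mathcal{V}_{k,x,r}$ is empty unless $2^k\leq Cr$; this fixes a top scale $k_0$ with $2^{k_0}\approx r$. For each $k\leq k_0$, Lemma~\ref{lem:E:basic:bound} with $p=1$, $\theta=\dmn$ (valid since $r<5\diam\Gamma$) gives
\begin{equation*}
\sum_{R\in\mathcal{V}_{k,x,r}}\sup_R|Eg|\,\ell(R)^{\dmn}\leq C\,2^{k(\dmn-\bdmn)}\int_{\Delta(x,Kr)}|g|\,d\sigma .
\end{equation*}
The constant $K$ is uniform in $k$, so the integral on the right does not depend on $k$. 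Because $\dmn-\bdmn>0$, the geometric series $\sum_{k\leq k_0}2^{k(\dmn-\bdmn)}$ converges and is bounded by $C2^{k_0(\dmn-\bdmn)}\leq Cr^{\dmn-\bdmn}$. This yields the stated bound.

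For the qualitative claim, any $r>0$ can be handled directly when $r<5\diam\Gamma$, and this covers every $r$ if $\Gamma$ is unbounded. If $\Gamma$ is bounded and $r\geq 5\diam\Gamma$, I would split $B(x,r)$ into two pieces. The first is $B(x,4\diam\Gamma)$, which is controlled by the quantitative estimate. The second is the remainder, where every point $y$ has $\delta(y)\geq 4\diam\Gamma-\diam\Gamma=3\diam\Gamma$. Provided $\delta(y)>\frac92\diam\Gamma$, the remainder lies where~\eqref{eqn:extension:far:constant} says $Eg$ is the constant $\fint_\Gamma g$. On the intermediate shell, $Eg$ is smooth by~\eqref{eqn:E:smooth}, and the shell is a compact subset of $\R^\dmn\setminus\Gamma$, so $Eg$ is bounded there. (Alternatively, one can slightly enlarge the inner ball while staying below $5\diam\Gamma$.) In every case $Eg\in L^1(B(x,r))$.

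The only point requiring care is the uniformity of $K$ and $C$ in $k$ in Lemma~\ref{lem:E:basic:bound}, which is what allows the sum over scales to be dominated by a single geometric series. This uniformity is exactly as stated in the lemma, so I expect no real obstacle.
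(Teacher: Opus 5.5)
Your proof is correct and is essentially the paper's argument. You cover $B(x,r)\setminus\Gamma$ by the cubes in $\mathcal{V}_{k,x,r}$ with $2^k\lesssim r$, apply Lemma~\ref{lem:E:basic:bound} with $p=1$ and $\theta=\dmn$, and sum the resulting geometric series using $\bdmn<\dmn$.

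Your compact-shell treatment of the case $r\geq 5\diam\Gamma$ is sound, and more explicit than the paper's one-line reduction. Off $B(x,5\diam\Gamma)$ one only knows $\delta\geq 4\diam\Gamma<\frac92\diam\Gamma$, so~\eqref{eqn:extension:far:constant} does not apply there directly. For the same reason, drop your parenthetical alternative: enlarging the inner ball while staying below $5\diam\Gamma$ never reaches the region where~\eqref{eqn:extension:far:constant} holds.
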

\begin{proof}
If $r\geq 5\diam \Gamma $ then we may bound $Eg$ using the local estimate~\eqref{eqn:extension:far:constant} outside of $B(x,5\diam \Gamma )$, and so we may assume that $r<5\diam \Gamma $.
We have that
\begin{equation*}\int_{B(x,r)}|Eg|
\leq \sum_{\substack{R\in\mathcal{G}\\R\cap B(x,r)\neq\emptyset}}\int_R |Eg|.\end{equation*}
Let $\mathcal{V}_{k,x,r}$ be as in~\eqref{eqn:V:ball}.
 By the bound~\eqref{eqn:R:B:size}, $\mathcal{V}_{k,x,r}=\emptyset$ if $2^k>Cr$, so 
\begin{equation*}\int_{B(x,r)}|Eg|
\leq \sum_{k=-\infty} ^{\lfloor \log_2 r\rfloor+C} \sum_{R\in \mathcal{V}_{k,x,r}}\int_R |Eg|,\end{equation*}
and by Lemma~\ref{lem:E:basic:bound} we have that
\begin{equation*}\int_{B(x,r)}|Eg|
\leq 
C\sum_{k=-\infty} ^{\lfloor \log_2 r\rfloor+C} 
2^{k\pdmn-k\pbdmn} \int_{\Delta(x,Kr)} |g|\,d\sigma
.\end{equation*}
Because $\bdmn<\dmn$, we may sum to see that
\begin{equation*}\int_{B(x,r)}|Eg|
\leq 
Cr^{\pdmn-\pbdmn} \int_{\Delta(x,Kr)} |g|\,d\sigma
<\infty\end{equation*}
as desired.
\end{proof}

Our next result establishes the trace of our extension operator (showing that it is, indeed, an extension operator).
The result~\eqref{eqn:trace:of:extension} is \cite[formula~(7.8)]{DavFM21}. The authors proved it for $\bdmn<\dmnMinusOne$ and $\Gamma$ unbounded, but the proof in the full generality we consider is the same. We include it here for completeness.
\begin{cor}\label{cor:trace:of:extension}
If $g\in L^1_{loc}(\Gamma)$, then for $\sigma$-almost every $x\in\Gamma$ we have that
\begin{equation}\label{eqn:trace:of:extension}\lim_{r\to 0^+} \fint_{B(x,r)} Eg = g(x)\end{equation}
and, in fact, have the  stronger convergence result
\begin{equation}\label{eqn:trace:of:extension:stronger}\lim_{r\to 0^+} \fint_{B(x,r)} |Eg-g(x)| = 0.\end{equation}
\end{cor}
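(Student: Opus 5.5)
Since the stronger statement~\eqref{eqn:trace:of:extension:stronger} implies~\eqref{eqn:trace:of:extension}, I will only prove~\eqref{eqn:trace:of:extension:stronger}. The plan is to take $x$ to be a Lebesgue point of $g$ with respect to $\sigma$ and reduce to Corollary~\ref{cor:E:L1}. Because $(\Gamma,|\cdot|,\sigma)$ is a space of homogeneous type (Ahlfors regularity makes $\sigma$ doubling), the Lebesgue differentiation theorem holds for $g\in L^1_{loc}(\Gamma)$. Hence for $\sigma$-almost every $x\in\Gamma$ we have
\begin{equation*}\lim_{\rho\to0^+}\fint_{\Delta(x,\rho)}|g-g(x)|\,d\sigma=0.\end{equation*}
I fix such an $x$ for the rest of the argument.

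Next I use that $E$ reproduces constants, because $\{\varphi_Q\}$ is a partition of unity; this was already used in the proof of Corollary~\ref{cor:extension:of:cts}. It gives $Eg-g(x)=E(g-g(x))$ on $\R^\dmn\setminus\Gamma$. Since $\Gamma$ has Lebesgue measure zero (by Lemma~\ref{lem:close:to:Ahlfors}, or simply because $\bdmn<\dmn$), I may integrate over $B(x,r)$ without worrying about $\Gamma$. For $0<r<5\diam\Gamma$, I apply Corollary~\ref{cor:E:L1} to the locally integrable function $h=g-g(x)$:
\begin{equation*}\int_{B(x,r)}|Eg-g(x)|\leq Cr^{\dmn-\bdmn}\int_{\Delta(x,Kr)}|g-g(x)|\,d\sigma.\end{equation*}

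Finally I normalize. We have $|B(x,r)|\approx r^\dmn$. For small $r$ (so that $Kr\le\diam\Gamma$, or in any case $r$ small when $\Gamma$ is unbounded), Ahlfors regularity gives $\sigma(\Delta(x,Kr))\approx (Kr)^\bdmn$. Combining these,
\begin{equation*}\fint_{B(x,r)}|Eg-g(x)|\leq C\fint_{\Delta(x,Kr)}|g-g(x)|\,d\sigma,\end{equation*}
and the right-hand side tends to $0$ as $r\to0^+$ by the choice of $x$.

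There is no substantial obstacle here. The main work is already contained in Lemma~\ref{lem:E:basic:bound} and Corollary~\ref{cor:E:L1}. The points that need care are the restriction $r<5\diam\Gamma$, which is harmless since only small $r$ matter, and confirming that Lebesgue differentiation is available on $\Gamma$, which follows from the doubling property of $\sigma$.
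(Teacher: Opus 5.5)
Your proposal is correct and follows essentially the same route as the paper: apply Corollary~\ref{cor:E:L1} to $g-g(x)$, use that $E$ reproduces constants, normalize via Ahlfors regularity to get $\fint_{B(x,r)}|Eg-g(x)|\leq C\fint_{\Delta(x,Kr)}|g-g(x)|\,d\sigma$, and finish with Lebesgue differentiation for the doubling measure $\sigma$. Your explicit remarks that $\Gamma$ has Lebesgue measure zero and that only the upper Ahlfors bound is needed in the normalization are correct details the paper leaves implicit.
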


\begin{proof}
Fix some $x\in\Gamma$ and let $G(z)=g(z)-g(x)$. By Corollary~\ref{cor:E:L1}, we have that if $0<r<5\diam \Gamma $ then
\begin{equation*}\int_{B(x,r)}|EG|
\leq 
Cr^{\pdmn-\pbdmn} \int_{\Delta(x,Kr)} |G|\,d\sigma.\end{equation*}
By Definition~\ref{dfn:extension} (the definition of our extension), and because $g(x)$ is a constant, we have that $EG(y)=Eg(y)-E(g(x))(y)=Eg(y)-g(x)$. Thus, applying the Ahlfors regularity of~$\Gamma$, if $r<\diam \Gamma $ then
\begin{equation*}\fint_{B(x,r)}|Eg-g(x)|
\leq 
C\fint_{\Delta(x,Kr)} |g-g(x)|\,d\sigma.\end{equation*}
Again, by the Ahlfors regularity condition, $\sigma$ is a doubling measure, and so by the Lebesgue differentiation theorem in doubling spaces (see, for example, \cite[Sections 2.8--2.9]{Fed69}), if $g\in L^1_{loc}(\Gamma)$ then $\sigma$-almost every $x\in \Gamma$ is a Lebesgue point for~$g$. By the definition of Lebesgue point, this completes the proof.
\end{proof}

\subsection{The gradient of the extension operator}

We now investigate the gradient of $Eg$ for $g$ lying in a Slobodeckij space $\dot \Lambda^{s,p}(\Gamma)$. As we did when investigating local integrability, we will present the basic technical computation first and then present the consequence for $\nabla E g$ as a corollary.

The $s=1/p=1/2$, $\bdmn<\dmnMinusOne$ (and $\Gamma$ unbounded) case of the bound~\eqref{eqn:E:Sobolev:bound} below, which is the main quantitative bound on $\nabla E g$ in $\R^\dmn\setminus\Gamma$, is \cite[Theorem~7.3]{DavFM21}. The proof of our generalization will closely follow Chapter~7 in \cite{DavFM21}.

\begin{lem}\label{lem:extension:gradient:morebasic}Let $\Gamma\subset\R^\dmn$ be a closed nonempty  $\bdmn$-Ahlfors regular set for some $0<\bdmn<\dmn$. Let $g\in L^1_{loc}(\Gamma)$ and let $Eg$ be as in Definition~\ref{dfn:extension}.
If $R\in \mathcal{G}$, then
\begin{align*}
\sup_{R}
|\nabla Eg|
&\leq 
\frac{C}{\ell(R)}
\fint_{\Delta_R}
\fint_{\Delta(\xi_R,K\ell(R))}  |g(x)-g(z)|\,d\sigma(z)\,d\sigma(x)
,\end{align*}
where $C>0$ and $K>1$ are constants depending only on $\dmn$, $\bdmn$, and $A$. 
\end{lem}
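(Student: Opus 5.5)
The key point is that the partition of unity sums to one, so its gradients sum to zero. This lets us subtract any constant before differentiating. Fix $R\in\mathcal{G}$ and $y\in R$. Since $Eg$ is a locally finite sum (see~\eqref{eqn:E:smooth}), we may differentiate term by term. Because $\sum_Q\varphi_Q\equiv1$ on $\R^\dmn\setminus\Gamma$, we have $\sum_Q\nabla\varphi_Q(y)=0$. Hence, for every constant $c$,
\begin{equation*}
\nabla Eg(y)=\sum_{\substack{Q\in\mathcal{G}\\ 2Q\cap R\neq\emptyset}}\nabla\varphi_Q(y)\biggl(\fint_{\Delta_Q}g\,d\sigma-c\biggr).
\end{equation*}
We choose $c=g(x)$ for a fixed $x\in\Delta_R$, estimate, and then average over $x\in\Delta_R$. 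Only cubes with $2Q\cap R\ne\emptyset$ contribute. By Lemma~\ref{lem:dilate:Whitney} there are at most $C_\dmn$ of them, and each has $\ell(Q)\approx\ell(R)$, so $|\nabla\varphi_Q|\le C\ell(R)^{-1}$. This gives
\begin{equation*}
|\nabla Eg(y)|\le \frac{C}{\ell(R)}\sup_{\substack{Q\in\mathcal{G}\\2Q\cap R\ne\emptyset}}\fint_{\Delta_Q}|g(z)-g(x)|\,d\sigma(z).
\end{equation*}

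The next step is to enlarge each $\Delta_Q$ to a common ball, exactly as in the proof of Lemma~\ref{lem:E:basic:bound}. For such $Q$, we have $|\xi_Q-\xi_R|\le C\ell(R)$ and $\diam Q\approx\ell(R)$. Therefore $\Delta_Q\subset\Delta(\xi_R,K\ell(R))$ for a suitable $K$, and we may take $K$ large enough that this ball also contains $\Delta_R$.

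We then need $\sigma(\Delta_Q)\gtrsim\sigma(\Delta(\xi_R,K\ell(R)))$. Ahlfors regularity gives this when $\diam Q\lesssim\diam\Gamma$. If $\Gamma$ is bounded and $\diam Q$ exceeds $\diam\Gamma$, both sets equal $\Gamma$, or at least have comparable measure $\approx(\diam\Gamma)^\bdmn$, so the comparison still holds. With the comparison in hand, $\fint_{\Delta_Q}|g-g(x)|\le C\fint_{\Delta(\xi_R,K\ell(R))}|g(z)-g(x)|\,d\sigma(z)$, uniformly in $Q$.

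Finally, the left-hand side does not depend on $x$. So we average the resulting inequality over $x\in\Delta_R$ and take the supremum over $y\in R$, which yields the claimed bound.

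The only delicate step is the measure comparison $\sigma(\Delta_Q)\approx\sigma(\Delta(\xi_R,K\ell(R)))$ when the radii exceed $\diam\Gamma$ for bounded $\Gamma$. I would handle it by replacing each radius $r$ with $\min(r,\diam\Gamma)$ in the Ahlfors estimate. Everything else is bookkeeping from Lemma~\ref{lem:dilate:Whitney}.
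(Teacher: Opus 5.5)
Your proof is correct and follows essentially the same route as the paper. Both use $\sum_Q\nabla\varphi_Q=0$ to subtract a constant, bound the at most $C_\dmn$ neighboring terms by $C/\ell(R)$ times a difference of averages, and enlarge $\Delta_Q$ to $\Delta(\xi_R,K\ell(R))$ via Ahlfors regularity; the paper subtracts $\fint_{\Delta_R}g$ directly rather than $g(x)$ followed by averaging over $x$, which gives the same inequality. For bounded $\Gamma$ the paper sidesteps the large-radius measure comparison by noting that $Eg$ is constant on cubes with $\ell(R)\gtrsim\diam\Gamma$ by \eqref{eqn:extension:far:constant}, while your estimate $\sigma(\Delta(\xi,r))\approx\min(r,\diam\Gamma)^\bdmn$ handles it equally well.
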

\begin{proof}
By formula~\eqref{eqn:extension:far:constant}, $Eg$ is constant on all cubes $R\in \mathcal{G}$ with $\ell(R)\geq C\diam \Gamma $, and so we may assume without loss of generality that $\ell(R)\leq C\diam \Gamma $.

Recall that (as observed in formula~\eqref{eqn:E:smooth}), if $g\in L^1_{loc}(\Gamma)$ then 
$Eg$ is smooth in~$\R^\dmn\setminus\Gamma$. Thus, the gradient exists in the classical sense. In fact, $Eg$ is a locally finite sum of smooth functions, and so if $y\in\R^\dmn$ then 
\begin{equation*}\nabla Eg(x)=\sum_{Q\in\mathcal{G}} \nabla \varphi_Q(x) \biggl(\fint_{\Delta_Q} g\,d\sigma\biggr).\end{equation*}

Observe that $\sum_{Q\in\mathcal{G}}\nabla\varphi_Q(y)=\nabla 1(y)=0$, and so $\sum_{Q\in\mathcal{G}} f(y) \nabla \varphi_Q(y) =0$ for any $f:\R^\dmn\to\R$. 
We therefore have that
\begin{equation*}
\nabla Eg(y)=\sum_{Q\in\mathcal{G}} \nabla \varphi_Q(y) \biggl(\fint_{\Delta_Q} g\,d\sigma-f(y)\biggr).\end{equation*}
We define $f(y)$ to be $\fint_{\Delta_R} g$ whenever $y\in R\in\mathcal{G}$. Thus, if $R\in\mathcal{G}$ then
\begin{equation*}\sup_{R}
|\nabla E g|
\leq \sum_{\substack{Q\in\mathcal{G}  \\ 2Q\cap R\neq\emptyset}} \sup_{R}|\nabla \varphi_Q | \biggl|\fint_{\Delta_Q} g\,d\sigma-\fint_{\Delta_R} g\,d\sigma\biggr|
.\end{equation*}
As usual $\{Q\in\mathcal{G}  : 2Q\cap R\neq\emptyset\}$ contains at most $C_{\dmn}$ elements all of diameter comparable to $\diam R $. Thus $|\nabla \varphi_Q (x)|\leq C/\ell(R)$ and $\Delta_Q\subset \Delta(\xi_R,K\ell(R))$ for all such~$Q$. Thus
\begin{align}
\sup_{R}
|\nabla E g|&\leq \frac{C}{\ell(R)}
\sum_{\substack{Q\in\mathcal{G}  \\ 2Q\cap R\neq\emptyset}} 
\biggl|\fint_{\Delta_Q} g\,d\sigma-\fint_{\Delta_R} g\,d\sigma\biggr|\notag
\\&= \frac{C}{\ell(R)}
\sum_{\substack{Q\in\mathcal{G}  \\ 2Q\cap R\neq\emptyset}} 
\biggl|\fint_{\Delta_R} \fint_{\Delta_Q}  g(x)-g(z)\,d\sigma(z)\,d\sigma(x)\biggr|\notag
\\&\leq \frac{C}{\ell(R)}
\fint_{\Delta_R}
\fint_{\Delta(\xi_R,K\ell(R))}  |g(x)-g(z)|\,d\sigma(z)\,d\sigma(x),\label{eqn:grad:ext:bd:whitney}
\end{align}
as desired.
\end{proof}

\begin{lem}\label{lem:extension:gradient:basic}
Let $\Gamma\subset\R^\dmn$ be a closed nonempty  $\bdmn$-Ahlfors regular set for some $0<\bdmn<\dmn$. Let $1\leq p\leq\infty$, let $-\pbdmn/p<s<\infty$, and let $g\in \dot \Lambda^{s,p}(\Gamma)\subset L^1_{loc}(\Gamma)$.
Let $Eg$ be as in Definition~\ref{dfn:extension}.

If $J$ is an integer, define 
\begin{equation*}S(J)=
\biggl(\sum_{\substack{R\in\mathcal{G} \\ \ell(R)\leq 2^J}}
\Bigl(\sup_R |\nabla Eg|\Bigr)^{p}
\ell(R)^{\bdmn+p-ps}\biggr)^{1/p}
\end{equation*}
with the obvious modification in the case $p=\infty$.

Then
\begin{equation*}S(J)\leq C\biggl(\int_\Gamma \int_{\Delta(x,K_1 2^J)} \frac{|g(x)-g(z)|^p}{|x-z|^{\bdmn+ps} }\,d\sigma(z)\,d\sigma(x)
\biggr)^{1/p},\end{equation*}
where $C>0$ and $K_1>1$ are constants depending only on $p$, $s$, $\dmn$, $\bdmn$, and $A$. 
\end{lem}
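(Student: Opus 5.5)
Apply Lemma~\ref{lem:extension:gradient:morebasic} cube by cube, convert the resulting double average into the Slobodeckij integrand, and control the overlaps with Lemma~\ref{lem:Whitney:boundary}. As in the proof of Lemma~\ref{lem:extension:gradient:morebasic}, we may discard cubes with $\ell(R)\geq C\diam\Gamma$, since $Eg$ is constant on them by~\eqref{eqn:extension:far:constant}. For each remaining $R\in\mathcal{G}$ with $\ell(R)\le 2^J$, Lemma~\ref{lem:extension:gradient:morebasic} gives
\begin{equation*}
\sup_R|\nabla Eg|\leq \frac{C}{\ell(R)}\fint_{\Delta_R}\fint_{\Delta(\xi_R,K\ell(R))}|g(x)-g(z)|\,d\sigma(z)\,d\sigma(x).
\end{equation*}
For $p\ge1$, Hölder's (Jensen's) inequality bounds the $p$-th power of the double average by the double average of $|g(x)-g(z)|^p$.

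For $x\in\Delta_R$ and $z\in\Delta(\xi_R,K\ell(R))$ we have $|x-z|\le C\ell(R)$, so $|x-z|^{\bdmn+ps}\le C\ell(R)^{\bdmn+ps}$. This holds because $\bdmn+ps>0$, which is exactly the hypothesis $s>-\bdmn/p$. By Ahlfors regularity, both averages contribute a factor of $\ell(R)^{-\bdmn}$. We therefore get
\begin{equation*}
\Bigl(\sup_R|\nabla Eg|\Bigr)^p\ell(R)^{\bdmn+p-ps}
\le C\ell(R)^{-p}\ell(R)^{-2\bdmn}\ell(R)^{\bdmn+p-ps}\ell(R)^{\bdmn+ps}\int_{\Delta_R}\int_{\Delta(x,K'\ell(R))}\frac{|g(x)-g(z)|^p}{|x-z|^{\bdmn+ps}}\,d\sigma(z)\,d\sigma(x).
\end{equation*}
All powers of $\ell(R)$ cancel. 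Here we have also used $\Delta(\xi_R,K\ell(R))\subset\Delta(x,K'\ell(R))$ for $x\in\Delta_R$. For $p=\infty$ the argument is the same, taking a supremum in place of the sum; it is simpler and I would treat it separately at the end.

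Next we sum over the cubes. Write the bound as $\int_\Gamma \mathbf 1_{\Delta_R}(x)\,F_{\ell(R)}(x)\,d\sigma(x)$, where $F_t(x)=\int_{\Delta(x,K't)}\frac{|g(x)-g(z)|^p}{|x-z|^{\bdmn+ps}}\,d\sigma(z)$. Then
\begin{equation*}
S(J)^p\le C\int_\Gamma\sum_{k\le J}\#\{R:\ell(R)=2^k,\ x\in\Delta_R\}\,F_{2^k}(x)\,d\sigma(x).
\end{equation*}
By Lemma~\ref{lem:Whitney:boundary}, the count is bounded by a constant for each $k$, so
\begin{equation*}
S(J)^p\le C\int_\Gamma\sum_{k\le J}F_{2^k}(x)\,d\sigma(x).
\end{equation*}

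This last step is the main obstacle: the sum $\sum_{k\le J}F_{2^k}$ diverges logarithmically if bounded naively, since each $F_{2^k}$ integrates over the whole ball $\Delta(x,K'2^k)$. The fix is to localize the $z$-integration before summing. Averaging $|g(x)-g(z)|$ over $\Delta(\xi_R,K\ell(R))$ can be replaced by restricting to an annulus: write the difference through an intermediate average over a ball at scale $\ell(R)$, or use a chain of averages at dyadic scales $2^k$ with geometric weights. The cleanest route I would try first is to split $|g(x)-g(z)|\le|g(x)-m|+|m-g(z)|$ with $m$ a suitable average, and dominate the double average at scale $2^k$ by $\int_{2^{k-m}\le|x-z|\le K'2^k}$-type pieces with weights $2^{-\epsilon m}$. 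Summing over $k$ then counts each pair $(x,z)$ with $|x-z|\approx 2^j$ only boundedly (geometrically) many times. This yields the integral over $|x-z|<K_12^J$ on the right-hand side. For $s>0$ one can alternatively use directly that
\begin{equation*}
\sum_{k:\,2^k\gtrsim|x-z|}2^{-kps}\approx|x-z|^{-ps},
\end{equation*}
which requires keeping $\ell(R)^{-ps}$ rather than bounding by $|x-z|^{-ps}$ first. I would therefore redo the estimate that way: bound $\ell(R)^{-\bdmn-ps}\le$ (geometric sum over $k$ with $2^k\ge|x-z|/C$), interchange, and sum the geometric series. That route works cleanly when $s>0$, and with the restricted-annulus version it also covers $s\le0$.
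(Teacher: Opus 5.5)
Your final route is the paper's proof. After Jensen and the Ahlfors normalization, keep the full weight $\ell(R)^{-(\bdmn+ps)}$ rather than trading it for $|x-z|^{-(\bdmn+ps)}$. Then use Lemma~\ref{lem:Whitney:boundary} to bound the number of $R$ with $\ell(R)=2^k$ and $x\in\Delta_R$, interchange the sum with the integrals, and sum
\begin{equation*}
\sum_{\substack{k\le J\\ 2^k>|x-z|/K_1}}2^{-k(\bdmn+ps)}\le C|x-z|^{-\bdmn-ps}.
\end{equation*}
Your diagnosis of the first attempt is also correct: bounding $|x-z|^{\bdmn+ps}\lesssim\ell(R)^{\bdmn+ps}$ before summing costs a factor of about $\log(2^J/|x-z|)$. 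That attempt should simply be discarded.

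The gap is in how you judge the range of this route. You say the geometric-series argument "works cleanly when $s>0$". You then defer $-\bdmn/p<s\le 0$ to an unspecified ``restricted-annulus'' argument with weights $2^{-\epsilon m}$, which as written is not a proof. The restriction is a misreading. The series above has ratio $2^{-(\bdmn+ps)}$, so it is controlled by its smallest admissible term, and it converges exactly when $\bdmn+ps>0$. That is precisely the hypothesis $s>-\bdmn/p$. The condition $s>0$ only appears if you first replace $\ell(R)^{-\bdmn}$ by $|x-z|^{-\bdmn}$ and sum $2^{-kps}$ alone, as in your displayed formula. You even write the full weight $\ell(R)^{-\bdmn-ps}$ in your last paragraph, so the annulus decomposition is unnecessary and the whole range is covered by the same computation. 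For $p=\infty$ the hypothesis forces $s>0$ and there is no summation. There the pointwise bound $\ell(R)^{-s}\le C|x-z|^{-s}$ for $|x-z|\le K_1\ell(R)$ finishes the proof, as in the paper.
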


\begin{proof}
By formula~\eqref{eqn:extension:far:constant}, $Eg$ is constant on all cubes $R\in \mathcal{G} $ with $\ell(R)\geq C\diam \Gamma $, and so we may assume without loss of generality that $2^J\leq C\diam \Gamma $.

If $p=\infty$, then we have that
\begin{equation*}S( J)=\sup_{\substack{R\in\mathcal{G}  \\ \ell(R)\leq 2^J}} \sup_{R}
|\nabla E g| \,\ell(R)^{1-s}\end{equation*}
which, by Lemma~\ref{lem:extension:gradient:morebasic}, satisfies
\begin{align*}
S( J)
&\leq
\sup_{\substack{R\in\mathcal{G}  \\ \ell(R)\leq 2^J}}
\frac{C}{\ell(R)^s}
\fint_{\Delta_R} \fint_{\Delta(\xi_R,K\ell(R))}  |g(x)-g(z)|\,d\sigma(z)\,d\sigma(x)
.\end{align*}
Observe that $|x-z|\leq K_1\ell(R)\leq K_12^J$, where $K_1=K+\sqrt{n}$. Thus
\begin{align*}
S( J)
&\leq
\sup_{\substack{R\in\mathcal{G}  \\ \ell(R)\leq 2^J}} \frac{C}{\ell(R)^s}
\sup_{\substack{x,z\in\Gamma\\|x-z|\leq K_1\ell(R)}}
|g(x)-g(z)|
\leq
C
\sup_{\substack{x,z\in\Gamma\\|x-y|\leq K_12^J}}
\frac{|g(x)-g(z)|}{|x-z|^s}
\end{align*}
which is the desired estimate.

Now suppose $1\leq p<\infty$.
Applying H\"older's inequality and Lemma~\ref{lem:extension:gradient:morebasic}, we see that 
\begin{align*}
\sup_{R}
|\nabla E g|
&\leq 
\frac{C}{\ell(R)}
\biggl(
\fint_{\Delta_R} \fint_{\Delta(\xi_R,K\ell(R))}  |g(x)-g(z)|^p\,d\sigma(z)\,d\sigma(x)\biggr)^{1/p}
.\end{align*}
If $x\in \Delta_R$, then $|x-\xi_R|<\diam(R)$ and so $\Delta(\xi_R,K\ell(R))\subset \Delta(x,K_1\ell(R))$.
Recall that we have assumed that $\ell(R)\leq 2^J\leq C\diam \Gamma $. We may thus apply the Ahlfors condition to see that
\begin{align*}
\sup_{R}
|\nabla E g|
&\leq 
\frac{C}{\ell(R)^{1+2\pbdmn/p}}
\biggl(
\int_{\Delta_R} \int_{\Delta(x,K_1\ell(R))}  |g(x)-g(z)|^p\,d\sigma(z)\,d\sigma(x)\biggr)^{1/p}
.\end{align*}
Thus
\begin{align*}S( J)^p
&=
\sum_{\substack{R\in\mathcal{G}  \\ \ell(R)\leq 2^J}}
\Bigl(\sup_R |\nabla E g|\Bigr)^{p}
\ell(R)^{\bdmn+p-ps}
\\&\leq
\sum_{\substack{R\in\mathcal{G}  \\ \ell(R)\leq 2^J}}
\frac{C}{\ell(R)^{ps+\pbdmn}}
\int_{\Delta_R} \int_{\Delta(x,K_1\ell(R))}  |g(x)-g(z)|^p\,d\sigma(z)\,d\sigma(x)
.\end{align*}
Interchanging the order of summation and integration, we see that
\begin{align*}
S( J)^p
&\leq 
\int_{\Gamma} \sum_{\substack{R\in\mathcal{G}  \\ \ell(R)\leq 2^J \\ x\in \Delta_R}} 
\frac{C}{\ell(R)^{\bdmn+ps}}
 \int_{\Delta(x,K_1\ell(R))} |g(x)-g(z)|^p\,d\sigma(z)\,d\sigma(x)
.\end{align*}
By Lemma~\ref{lem:Whitney:boundary}, if $x\in\Gamma$ and $k\in\Z$, then there are at most $C$ cubes $R\in\mathcal{G} $ with $x\in \Delta_R$ and $\ell(R)=2^k$. Thus,
\begin{align*}
S( J)^p
&\leq 
\int_{\Gamma} 
\sum_{ k=-\infty}^J
\sum_{\substack{R\in\mathcal{G}  \\ \ell(R)= 2^k \\ x\in \Delta_R}} 
\frac{C}{\ell(R)^{\bdmn+ps}}
 \int_{\Delta(x,K_1\ell(R))} |g(x)-g(z)|^p\,d\sigma(z)\,d\sigma(x)
\,
\\&\leq C
\int_{\Gamma} 
\sum_{ k=-\infty}^J
2^{-k(\bdmn+ps)}
\int_{\Delta(x,K_12^k)} |g(x)-g(z)|^p\,d\sigma(z)\,d\sigma(x)
.\end{align*}
We change the order of integration and summation again to see that
\begin{align*}
S( J)^p
&\leq C
\int_{\Gamma} \int_{\Delta(x,K_12^J)} 
\sum_{\substack{ k\leq J\\ 2^k>|x-z|/K_1}}
2^{-k(\bdmn+ps)}
|g(x)-g(z)|^p\,d\sigma(z)\,d\sigma(x)
.\end{align*}
Because $\bdmn+ps>0$, the sum converges and 
\begin{align*}
S(J)^p
&\leq C
\int_{\Gamma} \int_{\Delta(x,K_12^J)} 
|x-z|^{-\bdmn-ps}
|g(x)-g(z)|^p\,d\sigma(z)\,d\sigma(x)
\end{align*}
as desired.
\end{proof}

We now establish  estimates for the $\dot W^{1,p}_{av,c,\beta,s}(\R^\dmn\setminus\Gamma)$ norm of~$Eg$.

\begin{cor}\label{cor:E:Sobolev:bound}
Let $\Gamma\subset\R^\dmn$ be a closed nonempty  $\bdmn$-Ahlfors regular set for some $0<\bdmn<\dmn$. Let $1\leq p\leq\infty$, $-\bpdmn/p<s<\infty$, $0<c<1$, and $g\in \dot \Lambda^{s,p}(\Gamma)\subset L^1_{loc}(\Gamma)$.
Let $Eg$ be as in Definition~\ref{dfn:extension}.
Then, for any $1\le\beta\le\infty$, we have the bound
\begin{align}
\label{eqn:E:Sobolev:bound}
\|Eg\|_{\dot W^{1,p}_{av,c,\beta,s}(\R^\dmn\setminus\Gamma)}
&\leq
C \|g\|_{\dot \Lambda^{s,p}(\Gamma)},
\end{align}
where $C$ is a constant depending on $p$, $s$, $\dmn$, $\bdmn$, $c$, and $A$.
\end{cor}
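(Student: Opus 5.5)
The plan is to reduce the averaged norm to a Whitney sum and then invoke Lemma~\ref{lem:extension:gradient:basic} with $J\to\infty$. By Lemma~\ref{lem:averaged:Whitney}, applied with $\Omega=\R^\dmn\setminus\Gamma$ and $F=\nabla Eg$ (which is smooth by~\eqref{eqn:E:smooth}, so $F\in L^\beta_{loc}$), we have
\begin{equation*}
\|Eg\|_{\dot W^{1,p}_{av,c,\beta,s}(\R^\dmn\setminus\Gamma)}
\approx
\biggl(\sum_{Q\in\mathcal{G}} \biggl(\fint_{2Q} |\nabla Eg|^\beta\biggr)^{p/\beta} \ell(Q)^{\bdmn+p-ps}\biggr)^{1/p},
\end{equation*}
with the usual modification if $p=\infty$. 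We bound the $L^\beta$ average by a supremum: $\bigl(\fint_{2Q}|\nabla Eg|^\beta\bigr)^{1/\beta}\leq \sup_{2Q}|\nabla Eg|$. This holds for every $1\le\beta\le\infty$, which is why the constant does not depend on~$\beta$.

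Next we control $\sup_{2Q}$ by suprema over Whitney cubes. The set $2Q$ is covered by those $R\in\mathcal{G}$ with $R\cap 2Q\neq\emptyset$. By Lemma~\ref{lem:dilate:Whitney}, there are at most $C_\dmn$ such $R$, and each satisfies $\ell(R)\approx\ell(Q)$. Hence
\begin{equation*}
\Bigl(\sup_{2Q}|\nabla Eg|\Bigr)^p\ell(Q)^{\bdmn+p-ps}\leq C\sum_{R\cap 2Q\neq\emptyset}\Bigl(\sup_R|\nabla Eg|\Bigr)^p\ell(R)^{\bdmn+p-ps}.
\end{equation*}
Conversely, each $R$ meets $2Q$ for at most $C_\dmn$ cubes~$Q$. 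Summing over $Q$ therefore bounds the Whitney sum by $C\,S(J)^p$ for every $J$, in the notation of Lemma~\ref{lem:extension:gradient:basic}. For $p=\infty$ the same comparison works with suprema in place of sums.

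Finally, $S(J)$ is increasing in $J$, and by Lemma~\ref{lem:extension:gradient:basic},
\begin{equation*}
S(J)\le C\biggl(\int_\Gamma\int_{\Delta(x,K_12^J)}\frac{|g(x)-g(z)|^p}{|x-z|^{\bdmn+ps}}\,d\sigma(z)\,d\sigma(x)\biggr)^{1/p}\le C\|g\|_{\dot\Lambda^{s,p}(\Gamma)},
\end{equation*}
uniformly in~$J$. Letting $J\to\infty$ and applying monotone convergence gives~\eqref{eqn:E:Sobolev:bound}. If $\Gamma$ is bounded, cubes with $\ell(R)\gtrsim\diam\Gamma$ contribute nothing by~\eqref{eqn:extension:far:constant}, so a finite $J$ already suffices.

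The only step needing care is the comparison between $\sup_{2Q}$ and $\sup_R$ over the neighbouring cubes, together with the bounded-overlap counting in both directions. This is routine given Lemma~\ref{lem:dilate:Whitney}, so I do not expect a genuine obstacle. The constant depends on $c$ only through Lemma~\ref{lem:averaged:Whitney}.
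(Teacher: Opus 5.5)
Your proposal is correct and follows essentially the same route as the paper: reduce via Lemma~\ref{lem:averaged:Whitney} to a Whitney sum of suprema of $\lvert\nabla Eg\rvert$, then apply Lemma~\ref{lem:extension:gradient:basic} and let $J\to\infty$. You also compare $\sup_{2Q}$ with $\sup_R$ over neighbouring cubes, which fills in a step the paper skips (it writes $\sup_R$ directly after invoking a lemma stated over $2Q$). One slip in wording: the full Whitney sum is bounded by $C\sup_J S(J)^p$, not by $C\,S(J)^p$ for each fixed $J$, though your subsequent monotone-convergence step shows you intended the former.
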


\begin{proof} We present only the case $p<\infty$; the $p=\infty$ case follows with suitable modifications.
It is clearly enough to prove~\eqref{eqn:E:Sobolev:bound} for $\beta=\infty$; the general case is a consequence of the Hölder inequality.
By Lemma~\ref{lem:averaged:Whitney}, we have that 
\begin{align*}
\|Eg\|_{\dot W^{1,p}_{av,c,\infty,s}(\R^\dmn\setminus\Gamma)}
&\approx
\biggl(\sum_{R\in\mathcal{G}}
\Bigl(\sup_R |\nabla Eg|\Bigr)^{p}
\ell(R)^{\bdmn+p-ps}\biggr)^{1/p}
.\end{align*}
By Lemma~\ref{lem:extension:gradient:basic}, we have that
\begin{equation*}
\sum_{\substack{R\in\mathcal{G}  \\ \ell(R)\leq 2^J}}
\Bigl(\sup_R |\nabla E g|\Bigr)^{p}
\ell(R)^{\bdmn+p-ps}
\leq 
C\int_\Gamma \int_{\Delta(x,K_1 2^J)} \frac{|g(x)-g(z)|^p}{|x-z|^{\pbdmn+ps} } \,d\sigma(z)\,d\sigma(x).
\end{equation*}
Letting $J\to\infty$ and recalling from Definition~\ref{dfn:Slobodeckij} that
\begin{equation*}\|g\|_{\dot \Lambda^{p,s}(\Gamma)}^p=
\int_{\Gamma} \int_{\Gamma} 
\frac{|g(x)-g(z)|^p}{|x-z|^{\pbdmn+ps} } \,d\sigma(z)\,d\sigma(x)
\end{equation*}
completes the proof.
\end{proof}
\subsection{Extensions for $p<1$}

In this section, we investigate extensions of functions in the atomic space $\dot {A} ^ {s,p}(\Gamma)$ for $0<s<1$ and $p\leq 1$.

\begin{prp}
Let $\Gamma\subset\R^\dmn$ be a closed $\bdmn$-Ahlfors regular set for some $0<\bdmn<\dmn$. Let $z\in\Gamma$, $0<\varrho<2\diam\Gamma$ and let $a$ be a block in $\Delta(z,\varrho)$ in the sense of Definition~\ref{dfn:atom}. Let $0<s<1$ and let $0<p\leq 1$ be such that $p> \bdmn/(\bdmn+s)$.

If $0<c<1$ and $1\leq \beta\leq \infty$, then we have the estimate
\begin{equation*}
\|Ea\|_{\dot W^{1,p}_{av,c,\beta,s}(\R^\dmn\setminus\Gamma)}
\leq C\varrho^{\bdmn/p-\bdmn /2-s}.
\end{equation*}

\end{prp}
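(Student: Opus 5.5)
As in Corollary~\ref{cor:E:Sobolev:bound}, the plan is to reduce to $\beta=\infty$ by H\"older's inequality. By Lemma~\ref{lem:averaged:Whitney} the $p$-th power of the norm is then comparable to
\begin{equation*}
\sum_{R\in\mathcal{G}}\Bigl(\sup_R|\nabla Ea|\Bigr)^p\ell(R)^{\bdmn+p-ps}.
\end{equation*}
Because $p\le 1$, I will not use the Slobodeckij route. Instead I will bound $\sup_R|\nabla Ea|$ pointwise, cube by cube, using Lemma~\ref{lem:extension:gradient:morebasic} together with the block properties of $a$. I then sum over dyadic scales $\ell(R)=2^k$, splitting the cubes into those near the support of~$a$ and those far from it.

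\textbf{Step 1 (pointwise bounds).} Lemma~\ref{lem:extension:gradient:morebasic} gives
\begin{equation*}
\sup_R|\nabla Ea|\le \frac{C}{\ell(R)}\fint_{\Delta_R}\fint_{\Delta(\xi_R,K\ell(R))}|a(x)-a(z)|\,d\sigma(z)\,d\sigma(x),
\end{equation*}
and in this double average $|x-z|\le K_1\ell(R)$. Three bounds follow.
\begin{enumerate}
\item Small cubes. If $\ell(R)\le\varrho$, the Lipschitz condition $|a(x)-a(z)|\le|x-z|\varrho^{-\bdmn/2-1}$ gives $\sup_R|\nabla Ea|\le C\varrho^{-\bdmn/2-1}$.
\item Large cubes. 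If $\ell(R)>\varrho$, the size condition $|a|\le\varrho^{-\bdmn/2}$ and the support condition $\supp a\subset\Delta(z,\varrho)$ give $|a(x)-a(z)|\le|a(x)|+|a(z)|$. Each term averages over a ball of radius $\approx\ell(R)$ to at most $\varrho^{-\bdmn/2}\sigma(\Delta(z,\varrho))/\sigma(\Delta(\xi_R,K\ell(R)))\lesssim \varrho^{\bdmn/2}\ell(R)^{-\bdmn}$ by Ahlfors regularity. Hence $\sup_R|\nabla Ea|\le C\varrho^{\bdmn/2}\ell(R)^{-\bdmn-1}$. Here the hypothesis $\varrho<2\diam\Gamma$ is used, and cubes with $\ell(R)\gtrsim\diam\Gamma$ can be discarded by~\eqref{eqn:extension:far:constant}.
\item Far cubes. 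If $\Delta(\xi_R,K_1\ell(R))$ misses $\Delta(z,\varrho)$, the integrand vanishes and $\nabla Ea=0$ on $R$. So only cubes with $\dist(R,z)\le C(\ell(R)+\varrho)$ contribute.
\end{enumerate}

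\textbf{Step 2 (counting).} The contributing cubes of side $2^k$ meet $B(z,C(2^k+\varrho))$. They are disjoint and each lies in $\{\delta\approx 2^k\}$, so Lemma~\ref{lem:close:to:Ahlfors} bounds their number by $C(2^k+\varrho)^{\bdmn}2^{k(\dmn-\bdmn)}/2^{k\dmn}=C(1+\varrho/2^k)^{\bdmn}$. (Equivalently, one can count cubes via Lemma~\ref{lem:Whitney:boundary}.)

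\textbf{Step 3 (summing over scales).} For $2^k\le\varrho$ the contribution is
\begin{equation*}
C\,(\varrho 2^{-k})^{\bdmn}\,\varrho^{-p\bdmn/2-p}\,2^{k(\bdmn+p-ps)}=C\varrho^{\bdmn-p\bdmn/2-p}\,2^{k(p-ps)}.
\end{equation*}
Since $p-ps>0$, this is a geometric series dominated by its top term $k\approx\log_2\varrho$, giving $C\varrho^{\bdmn-p\bdmn/2-ps}$. For $2^k>\varrho$ the contribution is
\begin{equation*}
C\,\varrho^{p\bdmn/2}\,2^{-kp(\bdmn+1)}\,2^{k(\bdmn+p-ps)}=C\varrho^{p\bdmn/2}\,2^{k(\bdmn-p\bdmn-ps)}.
\end{equation*}
This sums over $2^k>\varrho$ exactly when $\bdmn-p\bdmn-ps<0$, that is, $p>\bdmn/(\bdmn+s)$, and the sum is again $C\varrho^{p\bdmn/2+\bdmn-p\bdmn-ps}=C\varrho^{\bdmn-p\bdmn/2-ps}$. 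Taking $p$-th roots yields the claimed bound $C\varrho^{\bdmn/p-\bdmn/2-s}$.

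I expect the main obstacle to be the large-scale regime in Step 3. There the decay of $\nabla Ea$ must be quantified carefully, using the normalization of the block in $L^1$ (mass $\approx\varrho^{\bdmn/2}$) rather than in $L^\infty$. The cube count must also stay bounded and not grow with $2^k$, since that is what makes the threshold $p>\bdmn/(\bdmn+s)$ appear. When $\Gamma$ is bounded, I will additionally check that only scales $2^k\lesssim\diam\Gamma$ occur, which only helps the sums.
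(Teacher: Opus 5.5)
Your proposal is correct and follows essentially the same route as the paper. The shared steps are:
\begin{itemize}
\item the Whitney characterization from Lemma~\ref{lem:averaged:Whitney}, with the $\beta$-average replaced by a supremum;
\item the pointwise gradient bound of Lemma~\ref{lem:extension:gradient:morebasic};
\item a split at $\ell(R)\approx\varrho$, using the Lipschitz bound and the count from~\eqref{eqn:close:to:Ahlfors} for small cubes, and the size/support bound with boundedly many cubes per scale for large cubes;
\item two geometric sums, the second converging exactly when $p>\bdmn/(\bdmn+s)$.
\end{itemize}
The differences are only cosmetic: you locate the contributing cubes through the vanishing of the double average rather than of $Ea$ itself, and you use $|a(x)-a(y)|\le|a(x)|+|a(y)|$ instead of the paper's four-way splitting of $\Delta_R\times\widetilde\Delta_R$.
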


\begin{proof}
By Lemma~\ref{lem:averaged:Whitney}, 
\begin{equation*}\|Ea\|_{\dot W^{1,p}_{av,c,\beta,s}(\R^\dmn\setminus\Gamma)}^p
\approx \sum_{R\in\mathcal{G}} \biggl(\fint_R |\nabla Ea|^\beta\biggr)^{p/\beta} \ell(R)^{\bdmn+p-ps}.\end{equation*}

By the definition~\eqref{eqn:extension} of~$Ea$, if $Ea(y)\neq 0$ then there is at least one $Q\in\mathcal{G}$ with $y\in 2Q$ and with $a\neq 0$ somewhere in $\Delta_Q$. 
By Definition~\ref{dfn:atom}, this means that $\Delta(z,\varrho)\cap \Delta_Q\neq \emptyset$, and thus $|z-\xi_Q|<\varrho+\diam Q$.

Thus, if $Ea\not\equiv 0$ in $R$ for some $R\in\mathcal{G}$, then $R\cap 2Q\neq \emptyset$ for some $Q\in\mathcal{G}$ with $|z-\xi_Q|<\varrho+\diam Q$. By Lemma~\ref{lem:dilate:Whitney}, $\diam Q\leq 9\diam R$ and $\dist(\xi_Q,R)\leq \sup_{x\in 2Q} |x-\xi_Q|\leq \frac{9}{2}\diam Q$, and so
\begin{equation*}\dist(z,R)<\varrho+\frac{11}{2}\diam Q\leq \varrho+\frac{99}{2}\diam R.\end{equation*}

Thus
\begin{equation*}\|Ea\|_{\dot W^{1,p}_{av,c,\beta,s}(\R^\dmn\setminus\Gamma)}^p
\approx \sum_{\substack{R\in\mathcal{G}\\\dist(z,R)\leq \varrho+C_3\diam R}} \biggl(\fint_R |\nabla Ea|^\beta\biggr)^{p/\beta} \ell(R)^{\bdmn+p-ps}.\end{equation*}
Let $J=\infty$ if $\Gamma$ is unbounded, and let $J$ satisfy $2^{J}\leq 9\diam\Gamma< 2^{J+1}$ otherwise. By formula~\eqref{eqn:extension:far:constant} and the definition of Whitney decomposition, $Ea$ is constant in $R$ whenever $\ell(R)>2^J$. Replacing the average by a supremum and summing over the possible side lengths of~$R$, we see that
\begin{equation*}
\|Ea\|_{\dot W^{1,p}_{av,c,\beta,s}(\R^\dmn\setminus\Gamma)}^p
\leq C \sum_{k=-\infty}^J \sum_{\substack{R\in\mathcal{G} \\\ell(R)=2^k\\\dist(z,R)\leq \varrho+C_3\diam R}} \bigl(\sup_R |\nabla Ea|\bigr)^{p} 2^{k(\bdmn+p-ps)}
.\end{equation*}
For notational convenience let $\mathcal{G}_{k,z,\varrho}=\{R\in\mathcal{G} :\ell(R)=2^k$ and $\dist(z,R)\leq \varrho+C_3\diam R\}$.
Applying Lemma~\ref{lem:extension:gradient:morebasic}, we see that 
\begin{multline*}
\|Ea\|_{\dot W^{1,p}_{av,c,\beta,s}(\R^\dmn\setminus\Gamma)}^p
\\\leq C\sum_{k=-\infty}^J \sum_{R\in\mathcal{G}_{k,z,\varrho}} \biggl(
\fint_{\Delta_R}
\fint_{\Delta(\xi_R,C_2\ell(R))}  |a(x)-a(y)|\,d\sigma(y)\,d\sigma(x)\biggr)^{p} 2^{k(\bdmn-ps)}
.\end{multline*}

By Definition~\ref{dfn:atom}, for any $R\in\mathcal{G}$ we have that
\begin{multline*}
\fint_{\Delta_R}
\fint_{\Delta(\xi_R,C_2\ell(R))}  |a(x)-a(y)|\,d\sigma(y)\,d\sigma(x)
\\\leq
\varrho^{-\bdmn/2-1}
\fint_{\Delta_R}
\fint_{\Delta(\xi_R,C_2\ell(R))}  |x-y|\,d\sigma(y)\,d\sigma(x)
\leq
C\varrho^{-\bdmn/2-1}\ell(R)
.\end{multline*}
Furthermore, if $R\in\mathcal{G}_{k,z,\varrho}$ with $\ell(R)\leq\varrho$, then $R\subset \{x\in B(z,C\varrho):\dist(x,\Gamma)\leq C2^k\}$. By the bound~\eqref{eqn:close:to:Ahlfors}, and because Whitney cubes are pairwise disjoint, there are at most
$C\varrho^\bdmn (2^k)^{-\pbdmn}$ such cubes. Thus
\begin{multline*}\smash{\sum_{\substack{k\leq J\\2^k\leq\varrho}}} \sum_{R\in\mathcal{G}_{k,z,\varrho}} \biggl(
\fint_{\Delta_R}
\fint_{\Delta(\xi_R,C_2\ell(R))}  |a(x)-a(y)|\,d\sigma(y)\,d\sigma(x)\biggr)^{p} 2^{k(\bdmn-ps)}
\\\leq
C\varrho^{-\bdmn p/2-p}
\sum_{{2^k\leq\varrho}} 
2^{k(\bdmn+p-ps)}
\leq
C\varrho^{\bdmn-\bdmn p/2-ps}
.\end{multline*}

Now suppose that $\varrho<\ell(R)$ and that $R\in \mathcal{G}_{k,z,\varrho}$ for some $k\leq J$. 
Because $k\leq J$ we have that $\ell(R)<C\diam\Gamma$. By the Ahlfors condition
\begin{multline*}
\fint_{\Delta_R}
\fint_{\Delta(\xi_R,C_2\ell(R))}  |a(x)-a(y)|\,d\sigma(y)\,d\sigma(x)
\\\leq
\frac{C}{\ell(R)^{2\bdmn}}
\int_{\Delta_R}
\int_{\Delta(\xi_R,C_2\ell(R))}  
|a(x)-a(y)|\,d\sigma(y)\,d\sigma(x)
.\end{multline*}
Let $\widetilde \Delta_R=\Delta(\xi_R,C_2\ell(R))$ and let $\Delta_a=\Delta(z,\varrho)$.
We have that
\begin{align*}
\Delta_R\times \Delta(\xi_R,C_2\ell(R))
&=\Delta_R\times \widetilde \Delta_R
\\&= \bigl[(\Delta_R\cap\Delta_a)\times (\widetilde\Delta_R\cap\Delta_a)\bigr] \bigcup 
\bigl[(\Delta_R\setminus\Delta_a)\times(\widetilde\Delta_R\cap\Delta_a) \bigr]
\\&\qquad\bigcup 
\bigl[(\Delta_R\cap\Delta_a)\times(\widetilde\Delta_R\setminus\Delta_a) \bigr]\bigcup 
\bigl[(\Delta_R\setminus\Delta_a)\times(\widetilde\Delta_R\setminus\Delta_a) \bigr].\end{align*}
If $(x,y)\in (\Delta_R\setminus\Delta_a)\times(\widetilde\Delta_R\setminus\Delta_a) $, then $a(x)=a(y)=0$ because $a$ is supported in $\Delta_a$. Furthermore, $\Delta_R\subset \widetilde \Delta_R$. Thus
\begin{align*}\int_{\Delta_R}
\int_{\widetilde\Delta_R}  
|a(x)-a(y)|\,d\sigma(y)\,d\sigma(x)
&\leq
\int_{\Delta_a}
\int_{\Delta_a}  
|a(x)-a(y)|\,d\sigma(y)\,d\sigma(x)
\\&\qquad+
2\int_{\Delta_a}
\int_{\widetilde\Delta_R\setminus\Delta_a}  
|a(x)-a(y)|\,d\sigma(y)\,d\sigma(x)
.\end{align*}
Again taking estimates on~$a$ from Definition~\ref{dfn:atom}, we have that
\begin{align*}\int_{\Delta_R}
\int_{\widetilde\Delta_R}  
|a(x)-a(y)|\,d\sigma(y)\,d\sigma(x)
&\leq
\int_{\Delta_a}
\int_{\Delta_a}  
2\varrho^{-\bdmn/2}\,d\sigma(y)\,d\sigma(x)
\\&\qquad+
2\int_{\widetilde\Delta_R\setminus\Delta_a}
\int_{\Delta_a}  
\varrho^{-\bdmn/2}\,d\sigma(y)\,d\sigma(x)
.\end{align*}
Applying the Ahlfors condition again and recalling $\ell(R)>\varrho$, we see that
\begin{align*}\int_{\Delta_R}
\int_{\widetilde\Delta_R}  
|a(x)-a(y)|\,d\sigma(y)\,d\sigma(x)
&\leq
C\ell(R)^\bdmn
\varrho^{\bdmn/2}
.\end{align*}
Finally, because Whitney cubes are disjoint, if $2^k>\varrho$ then there are at most $C$ cubes $R\in \mathcal{G}$ with $\ell(R)=2^k$ and $\dist(R,z)\leq \varrho+C_3\diam R\leq C2^k$.
Thus
\begin{multline*}
\smash{\sum_{\substack{ k\leq J\\2^k>\varrho}}} \sum_{R\in\mathcal{G}_{k,z,\varrho}} \biggl(
\fint_{\Delta_R}
\fint_{\Delta(\xi_R,C_2\ell(R))}  |a(x)-a(y)|\,d\sigma(y)\,d\sigma(x)\biggr)^{p} 2^{k(\bdmn-ps)}
\\\leq
C\varrho^{p\bdmn/2}
{\sum_{{2^k>\varrho}}} 2^{k(\bdmn-p\bdmn-ps)}
.\end{multline*}
If $p> \bdmn/(\bdmn+s)$, then $\bdmn-p\bdmn-ps<0$ and so the geometric series converges, and 
\begin{multline*}
\smash{\sum_{\substack{ k\leq J\\2^k>\varrho}}} \sum_{R\in\mathcal{G}_{k,z,\varrho}} \biggl(
\fint_{\Delta_R}
\fint_{\Delta(\xi_R,C_2\ell(R))}  |a(x)-a(y)|\,d\sigma(y)\,d\sigma(x)\biggr)^{p} 2^{k(\bdmn-ps)}
\\\leq
C\varrho^{\bdmn-p\bdmn/2-ps}
.\end{multline*}
This completes the proof.

\end{proof}
As an immediate corollary, we have gradient bounds of the extension operator in~$\dot {A} ^ {s,p}$.
\begin{cor}\label{cor:atom:ext:bound}
Let $\Gamma\subset\R^\dmn$ be a closed $\bdmn$-Ahlfors regular set for some $0<\bdmn<\dmn$. Let $0<s<1$ and let $0<p\leq 1$ be such that $p> \bdmn/(\bdmn+s)$.

Then, if $0<c<1$ and $1\leq \beta\leq \infty$, for any $g\in \dot A^{s,p}$ we have the estimate
\begin{equation}\label{eqn:atom:ext:bound}
\|Eg\|_{\dot W^{1,p}_{av,c,\beta,s}(\R^\dmn\setminus\Gamma)}
\leq C\|g\|_{\dot A^{s,p}},
\end{equation}
where $C$ is a constant depending on $p$, $s$, $\dmn$, $\bdmn$, $c$, and $A$.

\end{cor}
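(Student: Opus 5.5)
We derive the corollary from the preceding proposition by combining linearity of $E$ with the $p$-triangle inequality for the quasi-norm of $\dot W^{1,p}_{av,c,\beta,s}$. Fix $g\in\dot A^{s,p}(\Gamma)$ and a decomposition $g=\sum_k\lambda_k a_k$, where each $a_k$ is a block in $\Delta(z_k,\varrho_k)$ and
\begin{equation*}\sum_k|\lambda_k|^p\varrho_k^{\bdmn-\bdmn p/2-sp}\leq 2\|g\|_{\dot A^{s,p}}^p.\end{equation*}
If $\Gamma$ is bounded and some $\varrho_k\geq 2\diam\Gamma$, then $a_k$ is also a block in $\Delta(z_k,\varrho)$ for a smaller admissible $\varrho$, at the cost of a harmless constant. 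Alternatively, for such large $\varrho_k$ one checks directly that $\nabla Ea_k$ is controlled; in the worst case the proposition's proof applies verbatim with the sum truncated at $J$. So we may apply the proposition to every block and obtain $\|Ea_k\|\leq C\varrho_k^{\bdmn/p-\bdmn/2-s}$.

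The key step is to identify $\nabla Eg$ with $\sum_k\lambda_k\nabla Ea_k$ pointwise in $\R^\dmn\setminus\Gamma$. The series converges in $\dot\Lambda^{s+\bdmn-\bdmn/p,1}(\Gamma)$. By Lemma~\ref{lem:extension:gradient:morebasic}, $\sup_R|\nabla E(\cdot)|$ on each fixed Whitney cube $R$ is bounded by a local $L^1$ difference quotient of the boundary function. That quantity is in turn bounded by $C_R$ times the $\dot\Lambda^{s',1}$ seminorm, where $s'=s+\bdmn-\bdmn/p$; this holds because $|x-z|\lesssim\ell(R)$ on the domain of integration, so we may insert the kernel $|x-z|^{-\bdmn-s'}$ at the cost of a constant depending on $R$. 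Hence the partial sums of $\nabla E(\sum\lambda_ka_k)$ converge uniformly on each $R$ to $\nabla Eg$. Note that $E$ annihilates constants only up to constants and $\nabla E$ kills them, so the equivalence-class ambiguity is irrelevant.

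Since $p\leq1$, the quantity $\|F\|^p_{L^p_{av,c,\beta,s}}$ is subadditive whenever $\beta\geq1$. Indeed, on each ball $\bigl(\fint|\sum F_k|^\beta\bigr)^{1/\beta}\leq\sum_k\bigl(\fint|F_k|^\beta\bigr)^{1/\beta}$ by Minkowski's inequality, and then $(\sum x_k)^p\leq\sum x_k^p$. To pass to the infinite series, apply Minkowski on each ball via Fatou's lemma, which the local uniform convergence permits. This gives
\begin{equation*}\|Eg\|^p_{\dot W^{1,p}_{av,c,\beta,s}}\leq\sum_k|\lambda_k|^p\|Ea_k\|^p\leq C\sum_k|\lambda_k|^p\varrho_k^{\bdmn-\bdmn p/2-sp}\leq 2C\|g\|^p_{\dot A^{s,p}}.\end{equation*}
Taking the infimum over decompositions completes the argument.

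The only genuinely delicate point is the convergence justification in the key step above, namely that $\nabla E$ commutes with the infinite sum. Lemma~\ref{lem:extension:gradient:morebasic}, together with the convergence in $\dot\Lambda^{s',1}$ and the fact that $s'>0$ (since $p>\bdmn/(\bdmn+s)$), should handle it.
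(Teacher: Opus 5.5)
Your argument is correct and matches what the paper leaves implicit in calling this an immediate corollary of the block estimate: linearity of $E$, $p$-subadditivity of $\|\cdot\|^p$ for $p\leq 1$, and an infimum over decompositions, with your uniform-convergence justification via Lemma~\ref{lem:extension:gradient:morebasic} and Fatou supplying a step the paper omits. One small caution: for bounded $\Gamma$ and $\varrho_k\geq 2\diam\Gamma$, merely shrinking $\varrho_k$ is not harmless when $\bdmn-\bdmn p/2-sp<0$ (which happens for some admissible $p$ whenever $\bdmn<2s$), so rely instead on your second option, rerunning the proposition's proof with the sum stopped at $J$, which does give $\|Ea_k\|\leq C\varrho_k^{\bdmn/p-\bdmn/2-s}$.
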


\section{Poisson implies Dirichlet}\label{sec:pf:thm}
In this section, we provide a proof of Theorems~\ref{thm:Dirichlet} and~\ref{thm:Dirichlet:Lq}. We will use the extension machinery to see how  well posedness of the Poisson problem implies  well posedness of the Poisson-Dirichlet problem. Here  $\Omega$ will be a domain with Ahlfors regular boundary of dimension $\bdmn$, and $L$ will be an elliptic operator as defined in~\eqref{eq:operator}, satisfying~\eqref{eqn:elliptic:introduction} and~\eqref{eqn:elliptic:introduction:1}.
Let us first give our definition of  well posedness. 
 \begin{defn}
 Let $0<s<1$ and $\pbdmn/(\bdmn+s)<p\le\infty$. We say that the $L^{p}_{av,\beta,s}(\Omega)$-Poisson problem for $L$ is  well posed in~$\Omega$ if, for every $\vec H\in L^{p}_{av,\beta,s}(\Omega)$, there is a unique function $u$ that satisfies
\begin{equation}\label{poisson:equation}
\left\{\begin{aligned}
Lu&=-\Div(\delta^{\bdmn+1-\pdmn}\vec H) \text{ in } \Omega,\\
u&\in  \dot W^{1,p}_{av,\beta,s}(\Omega),\\
\Tr& u=0,
\end{aligned}\right.\end{equation}
and furthermore for some (hence every, by~\eqref{eqn:equiv:c}) $c\in (0,1)$ there exists a constant $C$ (depending on~$c$) such that 
\begin{equation}\label{poisson:estimate}
\|u\|_{\dot W^{1,p}_{av,c,\beta,s}(\Omega)}
\leq C\|\vec H\|_{L^p_{av,c,\beta,s}(\Omega)}.
\end{equation}

We say that the $\dot B^{s,p}_p(\partial\Omega)$-Dirichlet problem for $L$ is  well posed in~$\Omega$ if, for every $f\in \dot B^{s,p}_p(\partial\Omega)$, there is a unique function $u$ that satisfies
\begin{equation}\label{dirichlet:equation}
\left\{\begin{aligned}
Lu&=0 \text{ in } \Omega,\\
u&\in  \dot W^{1,p}_{av,\beta,s}(\Omega),\\
\Tr& u=f,
\end{aligned}\right.\end{equation}
and furthermore for some $c\in (0,1)$ there exists a constant $C$ (depending on~$c$) such that
\begin{equation}\label{dirichlet:estimate}
\|u\|_{\dot W^{1,p}_{av,c,\beta,s}(\Omega)}
\leq C\| f\|_{\dot B^{s,p}_p(\partial\Omega)}.
\end{equation}
 \end{defn}
 \begin{rmk}\label{final:remark}
 Assume that the $L^{p}_{av,\beta,s}(\Omega)$-Poisson problem and the $\dot B^{s,p}_p(\partial\Omega)$-Dirichlet problem for $L$ are both  well posed. Then, by linearity, for every $\vec H\in L^{p}_{av,\beta,s}(\Omega)$ and $f\in \dot B^{s,p}_p(\partial\Omega)$ there is a unique function $u$ that satisfies
 \begin{equation}\label{poisson-dirichlet:equation}
\left\{\begin{aligned}
Lu&=-\Div(\delta^{\bdmn+1-\pdmn}\vec H) \text{ in } \Omega,\\
u&\in  \dot W^{1,p}_{av,\beta,s}(\Omega),\\
\Tr& u=f,
\end{aligned}\right.\end{equation}
and furthermore for some $c\in (0,1)$ there exists a constant $C$ (depending on~$c$) such that
\begin{equation}\label{p:dirichlet:estimate}
\|u\|_{\dot W^{1,p}_{av,c,\beta,s}(\Omega)}
\leq C\|\vec H\|_{L^p_{av,c,\beta,s}(\Omega)}+C\| f\|_{\dot B^{s,p}_p(\partial\Omega)}.
\end{equation}
 \end{rmk}
 \begin{prp}\label{prp:extension:sol}
Let $\Omega$ be a domain with $\bdmn$-Ahlfors regular boundary, let $1\le\beta\le\infty$, $0<s<1$ and $\pbdmn/(\bdmn+s)<p\le\infty$. Assume that the $L^{p}_{av,\beta,s}(\Omega)$-Poisson problem for $L$ is  well posed in~$\Omega$. Then the $\dot B^{s,p}_p(\partial\Omega)$-Dirichlet problem for $L$ is  well posed in~$\Omega$.
 \end{prp}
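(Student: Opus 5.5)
The plan is to reduce the Dirichlet problem to the Poisson problem by subtracting the extension $Ef$. Fix $f\in \dot B^{s,p}_p(\partial\Omega)$ and let $\Gamma=\partial\Omega$. By Corollary~\ref{cor:E:Sobolev:bound} (if $p\geq1$) or Corollary~\ref{cor:atom:ext:bound} (if $p<1$), the function $w=Ef|_\Omega$ lies in $\dot W^{1,p}_{av,c,\beta,s}(\Omega)$ with
\begin{equation*}\|w\|_{\dot W^{1,p}_{av,c,\beta,s}(\Omega)}\leq C\|f\|_{\dot B^{s,p}_p(\partial\Omega)}.\end{equation*}
Here we use that $\dist(\cdot,\Gamma)=\delta$ on $\Omega$, and that restricting the integral from $\R^\dmn\setminus\Gamma$ to $\Omega$ only decreases it. By Corollary~\ref{cor:trace:of:extension}, $\Tr w=f$ $\sigma$-a.e. in the sense of Definition~\ref{dfn:trace}. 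The stronger convergence~\eqref{eqn:trace:of:extension:stronger} gives $r^{-\dmn}\int_{B(x,r)\cap\Omega}|w-f(x)|\to0$, since $|B(x,r)|\approx r^\dmn$. For $p<1$ we need $f\in L^1_{loc}$; this holds because $\dot A^{s,p}\subset\dot\Lambda^{s+\bdmn-\bdmn/p,1}\subset L^1_{loc}$.

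Next set $\vec H=-\delta^{\pdmn-\bdmn-1}A\nabla w$. By~\eqref{eqn:elliptic:introduction:1}, $|\vec H|\leq\Lambda|\nabla w|$ pointwise, so $\|\vec H\|_{L^p_{av,c,\beta,s}}\leq \Lambda\|w\|_{\dot W^{1,p}_{av,c,\beta,s}}$. Moreover $Lw=-\Div(A\nabla w)=-\Div(\delta^{\bdmn+1-\pdmn}(-\vec H))$ in the weak sense~\eqref{eqn:L}; this needs $A\nabla w\in L^1_{loc}(\Omega)$, which is true since $w$ is smooth in $\Omega$ by~\eqref{eqn:E:smooth}. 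Now apply well posedness of the Poisson problem with datum $\vec H$. This gives $v\in\dot W^{1,p}_{av,\beta,s}(\Omega)$ with $\Tr v=0$, $Lv=-\Div(\delta^{\bdmn+1-\pdmn}\vec H)=-Lw$, and $\|v\|\leq C\|\vec H\|$. Setting $u=v+w$, we get $Lu=0$ and $\Tr u=f$ (traces add, since the defining limits are linear), together with the estimate~\eqref{dirichlet:estimate}.

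For uniqueness, let $u_1,u_2$ both solve~\eqref{dirichlet:equation}. Then $u_1-u_2\in\dot W^{1,p}_{av,\beta,s}(\Omega)$ has zero trace and solves $L(u_1-u_2)=0=-\Div(\delta^{\bdmn+1-\pdmn}\cdot 0)$. Uniqueness in the Poisson problem with $\vec H=0$ then forces $u_1=u_2$; as elements of the homogeneous space they agree, and the zero trace pins down any constant.

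The main obstacle I expect is bookkeeping rather than estimates. First, the averaged norm on $\Omega$ uses $\delta=\dist(\cdot,\partial\Omega)$ and balls $B(y,c\delta(y))\subset\Omega$, which is consistent with the $\R^\dmn\setminus\Gamma$ norm. Second, for $p<1$ we must ensure that $Ef$ is well defined and independent of the block decomposition. Since $E$ is linear and the series converges in $\dot\Lambda^{s+\bdmn-\bdmn/p,1}$, Corollary~\ref{cor:E:Sobolev:bound} with exponent $1$ makes $E$ continuous there, and pointwise evaluation of $Ef$ is continuous under $L^1_{loc}$ convergence modulo constants. Passing to the limit in the $p$-th power of the quasi-norm (by Fatou over Whitney cubes, using Lemma~\ref{lem:averaged:Whitney}) then gives the bound for general $f$ from the bound for single blocks.
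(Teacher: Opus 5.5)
Your proposal is correct and follows the same route as the paper. You set $\vec H=-\delta^{\pdmn-\bdmn-1}A\nabla Ef$ and bound it via Corollaries~\ref{cor:E:Sobolev:bound}, \ref{cor:atom:ext:bound} and~\eqref{eqn:elliptic:introduction:1}; you solve the Poisson problem, add $Ef$ back using Corollary~\ref{cor:trace:of:extension} for the trace, and obtain uniqueness from Poisson uniqueness with $\vec H=0$. The extra bookkeeping you supply is left implicit in the paper but is consistent with it: the restriction from $\R^\dmn\setminus\partial\Omega$ to $\Omega$, the $r^{-\dmn}$ normalization in Definition~\ref{dfn:trace}, $L^1_{loc}$ membership for $p<1$, and the limiting argument from blocks.
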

 \begin{proof}
 Let $f\in \dot B^{s,p}_p(\partial\Omega)$ and set 
 \begin{equation*}
 \vec H:= -\delta^{-(1+\bdmn-\dmn)}A\nabla E f.
 \end{equation*}
 By Corollary~\ref{cor:E:Sobolev:bound} and Corollary~\ref{cor:atom:ext:bound} we have that $\vec H\in L^p_{av,\beta,s}(\Omega)$ and
 \begin{equation}\label{eqn:Htof}
 \|\vec H\|_{L^p_{av,1/2,\beta,s}(\Omega)}\le C\|f\|_{\dot B^{s,p}_p(\partial\Omega)},
 \end{equation}
 where we used~\eqref{eqn:elliptic:introduction:1}. Therefore, by assumption there is a solution $w$ to~\eqref{poisson:equation} satisfying~\eqref{poisson:estimate}, and therefore, in light of~\eqref{eqn:Htof},
 \begin{equation*}
 \|w\|_{\dot W^{1,p}_{av,1/2,\beta,s}(\Omega)}\le C \|f\|_{\dot B^{s,p}_p(\partial\Omega)}.
 \end{equation*}
 Set 
 \begin{equation*}
 u:=w+Ef.
 \end{equation*}
 By construction, we have 
 \begin{equation*}
 Lu=Lw-\Div(A\nabla Ef)=0
 \end{equation*}
and Corollary~\ref{cor:trace:of:extension} implies
 \begin{equation*}
 \Tr(u)=0+\Tr(Ef)=f.
 \end{equation*}
 Finally, by~\eqref{eqn:E:Sobolev:bound},~\eqref{eqn:atom:ext:bound} and~\eqref{eqn:Htof},
 \begin{align*}
 \|u\|_{\dot W^{1,p}_{av,1/2,\beta,s}(\Omega)}&\le C_p \|w\|_{\dot W^{1,p}_{av,1/2,\beta,s}(\Omega)}+C_p \|Ef\|_{\dot W^{1,p}_{av,1/2,\beta,s}(\Omega)}\\
 &\le C \|f\|_{\dot B^{s,p}_p(\partial\Omega)}.
 \end{align*}
Thus $u$ solves~\eqref{dirichlet:equation}. Assume that $v$ is another solution; then $u-v$ solves the Poisson problem~\eqref{dirichlet:equation} with $\vec {H} = 0$, and, by assumption, this implies $u=v$.
 \end{proof}
 
 As an immediate corollary, we obtain our main results.
 \begin{proof}[Proof of Theorems~\ref{thm:Dirichlet} and~\ref{thm:Dirichlet:Lq}]
 Let $(s,1/p)$ be as in Figure~\ref{fig:arbitrary} or~\ref{fig:extrapolation} and $p_{L^*}^-<\beta<p_L^+$. By \cite[Theorems 1.5 and 1.7]{BarMP26pA} the $L^p_{av,\beta,s}$-Poisson problem is  well posed. By Proposition \ref{prp:extension:sol}, the Dirichlet problem is well posed too, and Remark~\ref{final:remark} ends the proof.
 \end{proof}
 \begin{rmk}
 The gradient bounds of the extension operator (see Corollaries~\ref{cor:E:Sobolev:bound} and~\ref{cor:atom:ext:bound}) hold in the full generality of domains with Ahlfors regular boundaries. All the additional assumptions on $\Omega$ appearing in Theorems~\ref{thm:Dirichlet} and~\ref{thm:Dirichlet:Lq} are required by the Poisson version ($f=0$) of Theorem~\ref{thm:Dirichlet} that we proved in  \cite{BarMP26pA}.  
\end{rmk}

\def\cprime{'}
\bibliographystyle{amsalpha}
\bibliography{bibli}
\end{document}